\definecolor{myblue}{rgb}{0,0,0.66}
\theoremstyle{plain}
\newtheorem{theorem}{Theorem}[section]
\newtheorem{proposition}[theorem]{Proposition}
\newtheorem{corollary}[theorem]{Corollary}
\newtheorem{lemma}[theorem]{Lemma}
\theoremstyle{definition}
\newtheorem{remark}[theorem]{Remark}
\newtheorem{assumption}{Assumption}
\DeclareMathOperator*{\Expw}{\Exp}
\newcommand{\cX}{\mathcal{X}}
\newcommand{\cF}{\mathcal{F}}
\renewcommand{\Re}{\mathbb{R}}
\renewcommand{\le}{\leqslant}
\renewcommand{\leq}{\leqslant}
\renewcommand{\ge}{\geqslant}
\renewcommand{\geq}{\geqslant}
\renewcommand{\P}{\mathbb P}
\renewcommand{\boldsymbol}[1]{#1}
\newcommand{\bs}[1]{\boldsymbol{#1}}
\newcommand{\Xvec}{\boldsymbol{X}}
\newcommand{\muvec}{\boldsymbol{\mu}}
\newcommand{\thetavec}{\boldsymbol{\theta}}
\newcommand{\varthetavec}{\boldsymbol{\vartheta}}
\newcommand{\sphere}{\mathbb{S}}
\newcommand{\sd}{\sphere^{d-1}}
\newcommand{\bd}{\mathbb{B}^{d}}
\newcommand{\Cov}{\text{Cov}}
\newcommand{\Var}{\mathbb{V}}
\newcommand{\Sigmavec}{\bs{\Sigma}}
\newcommand{\rd}{\Re^d}
\newcommand{\eps}{\epsilon}
\newcommand{\Mspace}[1]{\mathscr{M}(#1)}
\newcommand{\ind}{\mathbf{1}}
\newcommand{\la}{\langle}
\newcommand{\ra}{\rangle}
\newcommand{\Exp}{\mathbb{E}}
\newcommand{\E}{\mathbb{E}}
\newcommand{\F}{\mathcal{F}}
\newcommand{\kl}{D_{\operatorname{KL}}}
\newcommand{\thres}{\mathsf{th}}
\newcommand{\dotprod}[2]{\left \langle #1, #2 \right \rangle}
\renewcommand{\d}{\mathrm{d}}
\newcommand{\e}{\mathrm e}
\newcommand{\dball}{\mathbb{B}^{d}}
\newcommand{\vol}{\text{vol}}
\newcommand{\edit}[1]{{\color{black}#1}}
\newif\ifarxiv
\newcommand{\dynsize}{\ifthenelse{\boolean{arxiv}}{\normalsize}{\small}}
\title{Time-Uniform Confidence Spheres \\ for Means of Random Vectors\footnote{Published in \emph{Transactions on Machine Learning Research}}}
\author[1,2]{Ben Chugg}
\author[2]{Hongjian Wang}
\author[1,2]{Aaditya Ramdas}
\affil[1]{Machine Learning Department}
\affil[2]{Department of Statistics and Data Science}
\affil[ ]{Carnegie Mellon University}
\affil[ ]{\texttt{\{benchugg, hjnwang, aramdas\}@cmu.edu}}
\date{May 2025}
\begin{document}

\maketitle

\begin{abstract}
We study sequential mean estimation in $\mathbb{R}^d$. In particular, we derive time-uniform confidence spheres---\emph{confidence sphere sequences} (CSSs)---which contain the mean of random vectors with high probability simultaneously across all sample sizes. 
    Our results include a dimension-free CSS for log-concave random vectors, a dimension-free CSS for sub-Gaussian random vectors, and 
    CSSs for sub-$\psi$ random vectors (which includes sub-gamma, sub-Poisson, and sub-exponential distributions). Many of our results are optimal. For sub-Gaussian distributions we also provide a CSS which tracks a time-varying mean, generalizing Robbins' mixture approach to the multivariate setting.  Finally, we provide several CSSs for heavy-tailed random vectors (two moments only). 
    Our bounds hold under a martingale assumption on the mean and do not require that the observations be iid. 
    Our work is based on PAC-Bayesian theory and inspired by an approach of Catoni and Giulini. 
\end{abstract}

\ifarxiv 
{\footnotesize 
\tableofcontents
}
\fi

\section{Introduction}

We consider the classical problem of estimating the mean of random vectors in nonparametric settings. Perhaps unlike traditional approaches however, we would like to perform this task sequentially. 
That is, we are interested providing estimates of the mean which hold uniformly across time as more data arrives. 
To formalize, 
let $(\bs{X}_t)_{t\geq 1}$ be a stream of iid random vectors in $\cX\subset \Re^d$ with mean $\bs{\mu}$ drawn from some unknown distribution $P$.  
Our goal is to construct a non-trivial sequence of sets, 
$(\bs{C}_t)_{t\geq 1}$, $\bs{C}_t\subset \Re^d$, such that 
\begin{equation}
    P(\forall t\geq 1: \bs{\mu} \in \bs{C}_t) \geq 1-\alpha,
\end{equation}
for some predetermined $\alpha\in(0,1)$. In $\Re^1$, such a sequence is often called a \emph{confidence sequence (CS)}. 
We will use the more general term \emph{confidence sphere sequence} (CSS) to highlight that we are working in higher dimensions, and because our sets $\bs{C}_t$ are spherical as opposed to, say, ellipsoidal. 
CSs and CSSs are in contradistinction to confidence \emph{intervals} (CIs)---the latter hold only at  fixed times $t=n$ determined prior to receiving any samples, while the former hold \emph{simultaneously} at all stopping times. 
Consequently, they are often described as \emph{time-uniform} or as \emph{anytime-valid}.
Importantly, CSs and CSSs obviate the need to deploy union bounds to gain uniformity over the sample size. This is attractive for tasks in which data are continuously monitored during collection and which may call for data-dependent stopping times, two properties disallowed by fixed-time constructions such as confidence intervals (where it is known as p-hacking).

The study of CSs was pioneered by Darling, Robbins, Siegmund, and Lai in the decade from 1967-76 \citep{darling1967confidence,lai1976confidence}. While further inquiry laid somewhat dormant for several decades afterwards, they have recently regained interest due to novel mathematical techniques and applicability to sequential decision-making. Indeed, CSs are used by several companies for A/B testing and adaptive experimentation.\footnote{E.g., see \citet{adobe},  \citet{amazon}, and Netflix~\citep{lindon2022rapid}.} 
We refer the reader to \citet{howard2021time,waudby2023estimating} for more history and modern work on this topic.

Thus far, most work on CSs for the mean of random observations has been in the scalar setting (with some exceptions discussed below). However, estimating higher-dimensional means arises naturally in a wide variety of tasks \edit{in statistics and machine learning}, including multi-armed bandits \citep{dani2008stochastic,abbasi2011improved}, \edit{regression \citep{lai1982least,vidaurre2013survey}, covariance estimation~\citep{kuchibhotla2022moving},}  stochastic optimization \citep{feldman2017statistical}, and  \edit{anomaly detection~\citep{rousseeuw2018anomaly}}, to name a few. Advances in time-uniform confidence regions for estimators can therefore translate to sequentially-valid versions of such tasks. 

In this work we construct CSSs by means of PAC-Bayesian analysis (introduced below), inspired by the original work of \citet{catoni2017dimension,catoni2018dimension} which is in the fixed-time setting. We combine their insights with recent work which provides a general framework for understanding how and when PAC-Bayesian bounds can be made time-uniform \citep{chugg2023unified}. 
The resulting synthesis yields a general method for constructing CSSs under a variety of distributional assumptions.

A concise summary of our results is given in Table~\ref{tab:results} \edit{and their relationship to the existing literature is explored in   Table~\ref{tab:comparisons}}.  All bounds
are closed-form and easily computable. 
In more detail, our contributions are: 
\begin{enumerate}
\item {\bf Sub-Gaussian bounds.} Theorem~\ref{thm:sub-gaussian-dim-free} provides a CSS for conditionally sub-Gaussian distributions. The width of the CSS depends on the trace and norm of the covariance matrices instead of the dimension of the ambient space ($d$). Such ``dimension-free'' bounds are desirable when the data are expected to lie on some low-dimensional manifold, i.e., when the effective dimension is less than $d$. 
Theorem~\ref{thm:lil-subG} leverages Theorem~\ref{thm:sub-gaussian-dim-free} to provide a CSS whose width shrinks at the optimal iterated-logarithm rate. 
Theorem~\ref{thm:time-varying-subG} provides a CSS when the mean changes with time, generalizing a result of \citet{robbins1970statistical} to the multivariate setting.  
For technical reasons to be discussed later, this final result is not dimension-free. 
\item {\bf Log-concave bounds. } Theorem~\ref{thm:log-concave} provides a dimension-free CSS for distributions with log-concave densities (and more generally for distributions with bounded 1-Orlicz norm) which include Laplace, logistic, Dirichlet, gamma, and beta distributions. 
This extends a recent result of \citet{zhivotovskiy2024dimension}. Theorem~\ref{thm:lil-log-concave} then extends Theorem~\ref{thm:log-concave} to provide a bound which shrinks at an iterated-logarithm rate. 
\item {\bf Sub-$\psi$ bounds.} 
While sub-Gaussianity and log-concavity are  particularly popular distributional classes, there are many other light-tailed conditions one can consider.  
Theorem~\ref{thm:sub-psi-css} provides a CSS for conditionally sub-$\psi$ random vectors \edit{for super-Gaussian} $\psi$-functions (which includes sub-Poisson, sub-exponential, and sub-gamma distributions). \edit{A bound for general $\psi$-functions is given by Theorem~\ref{thm:sub-psi-vmf}.} 
The downside of considering such a general class is that these CSSs are not dimension-free. Theorems~\ref{thm:lil-sub-gamma} and \ref{thm:lil-sub-exp} deploy Theorem~\ref{thm:sub-psi-css} to obtain iterated-logarithm rates for sub-gamma and sub-exponential distributions. 
\item {\bf Heavy-tailed bounds.} We provide several results for heavy-tailed distributions under only a second moment assumption. Theorems~\ref{thm:second_moment_dim_free} and \ref{thm:second_moment_symmetric} provide two semi-empirical CSSs (in the sense of being adaptive to the observed values).  
Theorem~\ref{thm:second_moment_symmetric} is tighter but holds only under a conditional symmetry assumption. 
Theorem~\ref{thm:catoni-estimator} provides a time-uniform extension of the dimension-free Catoni-Giulini estimator~\citep{catoni2018dimension} and Theorem~\ref{thm:lil-cg} provides an iterated-logarithm version of Theorem~\ref{thm:catoni-estimator}. All bounds stated in this section are dimension-free. 
\end{enumerate}

\begin{table}[t]
    \centering
    \footnotesize
    \def\arraystretch{1.2}
    \begin{tabular}{r|l|l|c|c|c|c|c}
    \emph{Result} & \emph{Dim} & \emph{Condition} & \emph{Dim-free?} & \emph{Martingale?} & \emph{CF?} & \emph{LIL?} & \emph{$n$ opt?} \\
    \hline 
        Thm.~\ref{thm:cs_bounded} & $d\geq 1$ & $\norm{\Xvec_t}$ bounded &  & \checkmark & \checkmark & \\
        Thm.~\ref{thm:sub-gaussian-dim-free} & $d\geq 1$ & $\Sigmavec_t$-sub-Gaussian & \checkmark & \checkmark &  \checkmark & \\
        Cor.~\ref{cor:fixed-time-subg} & $d\geq 1$ & $\Sigmavec$-sub-Gaussian & \checkmark & \checkmark &  \checkmark & & \checkmark \\ 
        Thm.~\ref{thm:lil-subG} & $d\geq 1$ & $\Sigma$-sub-Gaussian & & \checkmark &  \checkmark & \checkmark \\ 
         Thm.~\ref{thm:time-varying-subG} & $d\geq 1$ & $\sigma_t$-sub-Gaussian & & \checkmark &  \checkmark & \\
        Thm.~\ref{thm:log-concave} & $d\geq 1$ & Log-concave/finite $\Phi_1$-norm & \checkmark & \checkmark & \checkmark & \\
        Thm.~\ref{thm:lil-log-concave} & $d\geq 1$ & Log-concave/finite $\Phi_1$-norm  & \checkmark & \checkmark & \checkmark & \checkmark \\ 
        Cor.~\ref{cor:log-concave-fixed-time} & $d\geq 1$ & Log-concave/finite $\Phi_1$-norm  & \checkmark & \checkmark & \checkmark & & \checkmark \\
        Thm.~\ref{thm:sub-psi-css},~\ref{thm:sub-psi-vmf} & $d\geq 1$ & $\Sigmavec_t$-sub-$\psi$ & & \checkmark &  \checkmark & & \\
        Thm.~\ref{thm:lil-sub-gamma} & $d\geq 1$ & $\Sigma$-sub-gamma & & \checkmark & \checkmark & \checkmark & \\ 
        Thm.~\ref{thm:lil-sub-exp} & $d\geq 1$ & $\Sigma$-sub-exponential & & \checkmark & \checkmark & \checkmark & \\ 
        Cor.~\ref{cor:fixed-time-subg} & $d\geq 1$ & $\Sigma$-sub-exponential & & \checkmark & \checkmark & & \checkmark  \\ 
        Thm.~\ref{thm:second_moment_dim_free}& $d\geq 1$ & $\E[\norm{\Xvec_t}|\F_{t-1}]\leq v^2$ & \checkmark & \checkmark &  \checkmark & \\ 
        Cor.~\ref{cor:second_moment_scalar},~\ref{cor:second_moment_scalar_fixed} & $d=1$ & $\E[X_t|\F_{t-1}]\leq v^2$ & & \checkmark &  \checkmark & \\ 
        Thm.~\ref{thm:second_moment_symmetric} & $ d\geq 1$ & $\E[\norm{\Xvec_t}^2|\F_{t-1}]\leq v^2$, sym. &\checkmark & \checkmark &\checkmark & \\ 
        Thm.~\ref{thm:catoni-estimator} & $d\geq 1$ & $\E[\norm{\Xvec_t}^2|\F_{t-1}]\leq v^2$ & \checkmark & \checkmark  & \checkmark & \\ 
        Thm.~\ref{thm:lil-cg} & $d\geq 1$ & $\E[\norm{\Xvec_t}^2|\F_{t-1}]\leq v^2$ & \checkmark & \checkmark  & \checkmark & \checkmark & 
    \end{tabular}
    \caption{A roadmap of the results in this work. $\Phi_1$ refers to the 1-Orlicz norm; see Section~\ref{sec:log-concave}. ``sym.'' stands for conditionally symmetric, meaning that the observations obey $\Xvec_t-\muvec|\F_{t-1} \sim -(\Xvec_t-\muvec)|\F_{t-1}$. ``Dim-free'' stands for dimension-free and ``martingale'' for martingale-dependence (i.e., Assumption~\ref{assump:ccm}). 
    ``CF'' stands for closed-form, meaning the bound does not require numerical methods to solve.  ``LIL'' refers to sequential bound with iterated logarithm rates and ``$n$ opt'' refers to bound that are optimized for a particular sample size $n$. 
    }
    \label{tab:results}
\end{table}

\subsection{Related work}
Given its centrality to many statistical procedures, estimating the mean of random vectors is a well studied topic. 
For sub-Gaussian random vectors in the fixed-time setting, state-of-the-art concentration comes from \citet{hsu2012tail}. Our result has the same asymptotics (it is off by \citeauthor{hsu2012tail}'s result by an non-leading additive factor), but is time-uniform and handles martingale-dependence. We also improve on more classical bounds for isotropic sub-Gaussian random vectors obtained via covering arguments (e.g., \citealp[Theorem 1.19]{rigollet2023high}). 
Meanwhile, for heavy-tailed distributions, it is known that the sample mean (and weighted variants) have suboptimal performance~\citep{catoni2012challenging}. This led to the development of alternatives such as median-of-means~\citep{lugosi2019sub,minsker2015geometric} and threshold-based estimators~\citep{catoni2018dimension}, the latter of which we will make time-uniform in Theorem~\ref{thm:catoni-estimator}. We refer to \citet{lugosi2019mean} for a survey on fixed-time mean estimation under heavy tails.

In the sequential setting, the theory of ``self-normalized processes''~\citep{pena2004self,pena2009self} has been leveraged to give anytime-valid confidence ellipsoids under sub-Gaussian assumptions~\citep{abbasi2011improved,faury2020improved}. 
This line of work is focused on bounding the self-normalized quantity $\|\bs{V}_t^{-1/2}\bs{S}_t\|$ for $\bs{S}_t = \sum_{i=1}^t \eps_i \bs{X}_i$ where $\eps_i$ is the scalar noise  of an online regression model, $\bs{X}_i$ are multivariate observations, and $\bs{V}_t$ is a variance process. This setting is quite distinct from our own, which seeks non-self-normalized bounds and does not have both observations and noise components. 
Section~\ref{sec:ellipsoids} provides a brief discussion of how some of our bounds can be adapted to anisotropic distributions. This yields bounds in the Mahalanobis norm with respect to the variance but these results remain distinct from self-normalized bounds. 
\citet{whitehouse2023time} give a general treatment of time-uniform self-normalized bounds which extends beyond regression but it remains incomparable to our own work. Moreover, the results of \citet{abbasi2011improved, whitehouse2023time} only apply to sufficiently light-tailed distributions, and not to the more general heavy-tailed settings we consider in Section~\ref{sec:heavy-tailed}.

Meanwhile, 
\citet{manole2023martingale} provide anytime bounds on the mean of random vectors using reverse submartingales. Their bounds achieve the optimal iterated-logarithm rate of $\sqrt{\log\log (t)/t}$ \citep{darling1967confidence}. Their results are most comparable to our results for isotropic sub-exponential distributions, as they do not hold under heavy tails and are not dimension-independent. We are also able to handle slightly more general distributional assumptions, as \citet{manole2023martingale} work only with iid data. 
We will further discuss the relation to their work in Section \ref{sec:light-tailed}, but suffice it to say here that we can employ a geometric ``stitching method'' (a common tool in sequential analysis), yielding a bound which matches this optimal rate and has smaller constants. 
As for anytime bounds under heavy-tails, \citet{wang2023catoni} provide CSs for scalar random variables which hold under a finite $k$-th moment assumption for $k\in(1,2]$.

Much closer to the spirit of our work are the papers of \citet{zhivotovskiy2024dimension}, \citet{nakakita2024dimension}, and \citet{giulini2018robust}. All three use the same variational principle that is at the heart of our method to study the concentration of singular values of light-tailed random matrices~\citep{zhivotovskiy2024dimension}, the means of heavy-tailed random matrices~\citep{nakakita2024dimension}, and to estimate the Gram operator in Hilbert spaces~\citep{giulini2018robust}.   While the underlying perspective is similar to our own, our goals are distinct. Moreover, these authors work in the fixed-time setting and none handle martingale dependence. Despite these differences, however, we believe that these three papers along with our own suggest that the variational approach to concentration is a promising research avenue.  

Finally, recent work by \citet{duchi2024information} shows that any fixed-time estimator can be converted into a sequential estimator with a loss of an iterated logarithm factor (i.e., $O(\log\log t)$) via a ``doubling trick.'' 
Their construction employs a union bound over fixed-time estimators applied at times $t=2^k$, $k\geq 1$. At times $j\in (2^k, 2^{k+1})$ one continues to use the estimator for time $2^k$. 
This is an insightful theoretical construction for obtaining optimal rates, and we compare our results with their bound in Sections~\ref{sec:sub-gaussian} and \ref{sec:catoni}.  However, we note that it does have drawbacks. First, it is stated only for iid data. 
Second, it is not sequential in the practical sense that a data analyst would like. 
Indeed, if data collection stops at time $t=2^{k}-1$, then we must discard $2^{k}-2^{k-1}-1$ observations. 
The estimators proposed here can be updated after every time step, thus using all available data. This results in bounds that are often tighter than fixed-time bounds augmented with the doubling trick (see Figures~\ref{fig:subG} and \ref{fig:cat-vs-mom}).

\begin{table}[]
    \centering
    \footnotesize
    \def\arraystretch{1.2}
    \edit{
    \begin{tabular}{r|l|l|l|l|l|l}
        \emph{Condition} & \emph{Our result} & \emph{Comparison} & [A] & [C] & [D] &  \emph{Notes}  \\ \hline 
        sub-Gaussian & Cor.~\ref{cor:fixed-time-subg} & \citet{hsu2012tail}, Thm.\ 1 & \checkmark & &  \checkmark 
         &  (a)
         \\
         sub-Gaussian & Cor.~\ref{cor:fixed-time-subg} & \citet{rigollet2023high}, Thm.\ 1.19 & \checkmark & \checkmark & \checkmark \\ 
         sub-Gaussian & Thm.~\ref{thm:time-varying-subG} & \citet{waudby2021time}, Thm.\ 2.2 & & & & (b) \\ 
         log-concave & Cor.~\ref{cor:log-concave-fixed-time} & \citet{zhivotovskiy2024dimension}, Prop.\ 3 & \checkmark & & \checkmark  & \\ 
         sub-$\psi$ & Thm.~\ref{thm:sub-psi-css} & \citet{manole2023martingale}, Cor. 23 & & & & (c) \\ 
         sub-exponential & Cor.~\ref{cor:log-concave-fixed-time} & \citet{maurer2021concentration}, Prop. 7 & \checkmark & & \checkmark & \\
         sub-exponential & Thm.~\ref{thm:lil-sub-exp} & \citet{manole2023martingale}, Sec.\ 4.7 & & \checkmark & \checkmark & \\
        finite 2nd moment & Thm.~\ref{thm:catoni-estimator} & \citet{catoni2018dimension}, Prop.\ 2.1 & \checkmark & & \checkmark & \\
        finite 2nd moment & Thm.~\ref{thm:catoni-estimator} & \citet{lugosi2019sub}, Thm.\ 1 & \checkmark & & \checkmark & (d), (e) \\
        finite 2nd moment & Thm.~\ref{thm:catoni-estimator} & \citet{minsker2015geometric}, Thm.\ 3.1 & \checkmark & & \checkmark & (e)
    \end{tabular}
    }
    \caption{\edit{Comparison of our results with relevant bounds in the literature. More commentary on the relationship is provided in the text. Boxed letters indicated that our result improves over the existing result by adding [A]nytime-validity, improving the [C]onstants, or weakening the [D]ependence assumptions among the observations. (a): Off by a factor of at most 2. (b): Non-asymptotic, and our bound holds in $\Re^d$. (c) Our bound handles anisotropy. (d): Our bound is not sub-Gaussian. (e): we assume a bound on the raw second moment while they assume a bound on the centered second moment.} }
    \label{tab:comparisons}
\end{table}

\subsection{Background and approach}
\label{sec:background}

Let us introduce some technical tools. A (forward) filtration $\cF\equiv (\cF_t)_{t\geq 0}$ is a sequence of $\sigma$-fields such that $\cF_t\subset \cF_{t+1}$, $t\geq 1$. In this work we always take $\cF_t=\sigma(\Xvec_1,\dots,\Xvec_t)$, the $\sigma$-field generated by the first $t$ observations. 
A stochastic process $S = (S_t)_{t\geq 1}$ is \emph{adapted} to $(\cF_t)$ if $S_t$ is $\cF_{t}$ measurable for all $t$ (meaning that $S_t$ depends on $\Xvec_1,\dots,\Xvec_t$), and \emph{predictable} if $S_t$ is $\cF_{t-1}$ measurable for all $t$ (meaning that $S_t$ depends only on $\Xvec_1,\dots,\Xvec_{t-1}$). 
A $P$-\emph{martingale} is an integrable stochastic process $M\equiv(M_t)_{t\geq 1}$ that is adapted to $\cF$, such that $\E_P[M_{t+1}|\cF_{t}]=M_{t}$ for all $t$. If `=' is replaced with `$\leq$', then $M$ is a $P$-\emph{supermartingale}.

Our results rely on PAC-Bayesian theory which has emerged as successful method to bound the generalization gap of randomized predictors in statistical learning \citep{shawe1997pac,mcallester1998some}. As the name suggests, PAC-Bayesian bounds employ the ``probably approximately correct'' (PAC) learning framework of \citet{valiant1984theory} but addressed from a Bayesian perspective. 
Roughly speaking, rather than bounding the worst case risk, PAC-Bayesian bounds place with a prior over the parameter space $\Theta$, and provide uniform guarantees over all ``posterior'' distributions. Instead of containing a measure of the complexity of $\Theta$, the bounds involve a divergence term between the prior and posterior.   
We refer the unfamiliar reader to \citet{guedj2019primer} and \citet{alquier2021user} for an introduction to such bounds and their applications.  

For our purposes, the important feature of PAC-Bayesian bounds is the uniformity over posterior distributions. 
The insight of \citet{catoni2018dimension} is to translate this uniformity of distributions into a bound on $\la \varthetavec,\bs{\eps}\ra$ across all $\varthetavec\in\sd$, where $\bs{\eps}$ is the error of the estimator. Such a bound translates immediately into a confidence sphere. 
Recently, uniformity over posteriors was extended to uniformity over posteriors \emph{and time} by \citet{chugg2023unified}.  

\begin{proposition}[Corollary of Theorem \edit{4}, \citealp{chugg2023unified}]
\label{prop:pac-bayes-supermart}
    For each $\theta\in\Theta$, assume that $Q(\theta) \equiv (Q_t(\theta))_{t\geq 1}$ is a nonnegative supermartingale with initial value 1. Consider a prior distribution $\nu$ over $\Theta$ (chosen before seeing the data). Then, with probability at least $1-\alpha$, we have that simultaneously for all times $t\geq 1$ and posteriors $\rho\in\Mspace{\Theta}$, 
    \begin{equation}\label{eq:pacb-master}
        \int_\Theta \log Q_t(\theta)\rho(\d\theta)\leq \kl(\rho\|\nu) + \log(1/\alpha).
    \end{equation}
\end{proposition}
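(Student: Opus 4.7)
The plan is to combine the family $\{Q(\theta)\}_{\theta\in\Theta}$ into a single nonnegative supermartingale by mixing against the prior $\nu$, apply Ville's inequality once to that mixture, and then transfer the resulting high-probability bound from the mixture to an arbitrary posterior $\rho$ via the Donsker--Varadhan variational formula for the KL divergence.

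Concretely, I would first define the mixture process $M_t := \int_\Theta Q_t(\theta)\,\nu(\d\theta)$ and verify that $(M_t)_{t\geq 1}$ is itself a nonnegative $P$-supermartingale with respect to $(\cF_t)$. Nonnegativity is immediate from the pointwise nonnegativity of each $Q_t(\theta)$. For the supermartingale property, Fubini--Tonelli lets me exchange conditional expectation and integration against $\nu$:
\[
\E_P[M_{t+1}\mid\cF_t] \;=\; \int_\Theta \E_P[Q_{t+1}(\theta)\mid\cF_t]\,\nu(\d\theta) \;\leq\; \int_\Theta Q_t(\theta)\,\nu(\d\theta) \;=\; M_t,
\]
using that each $\theta$-slice $Q(\theta)$ is a $P$-supermartingale. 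Ville's maximal inequality applied to $(M_t)$, under the normalization $\E[M_1]\leq 1$ implicit in such PAC-Bayes constructions, then yields $P(\exists t\geq 1: M_t \geq 1/\alpha)\leq \alpha$; equivalently, with probability at least $1-\alpha$, $\log M_t \leq \log(1/\alpha)$ simultaneously for all $t\geq 1$.

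To obtain the posterior-uniform statement, I would invoke the Donsker--Varadhan variational characterization: for any measurable $f:\Theta\to\Re$ and any $\rho\in\Mspace{\Theta}$ with $\rho\ll\nu$,
\[
\int_\Theta f(\theta)\,\rho(\d\theta) \;\leq\; \kl(\rho\|\nu) + \log\int_\Theta \e^{f(\theta)}\,\nu(\d\theta).
\]
Setting $f(\theta) = \log Q_t(\theta)$ gives $\int \log Q_t(\theta)\,\rho(\d\theta) \leq \kl(\rho\|\nu) + \log M_t$, and combining with the Ville-based bound $\log M_t \leq \log(1/\alpha)$ yields \eqref{eq:pacb-master} uniformly in $t$ and $\rho$.

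The main technical obstacle is ensuring the joint measurability of $(\theta,\omega)\mapsto Q_t(\theta)(\omega)$ required to apply Fubini--Tonelli when constructing the mixture and when justifying the exchange of conditional expectation with $\nu$-integration; this is the standard regularity side-condition in PAC-Bayes arguments and is typically satisfied by the explicit supermartingales used downstream. A minor but worth-flagging wrinkle is the handling of $\{Q_t(\theta)=0\}$ on a $\rho$-positive set, for which one adopts the convention $\log 0 = -\infty$ so the asserted inequality is automatic; similarly, if $\rho\not\ll\nu$ then $\kl(\rho\|\nu)=+\infty$ and the bound is vacuous.
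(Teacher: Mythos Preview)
Your argument is correct and is precisely the standard route: mix against the prior to obtain a single nonnegative supermartingale, apply Ville's inequality once, and then invoke the Donsker--Varadhan variational formula to pass to arbitrary posteriors. The paper does not supply its own proof of this proposition; it is quoted as a corollary of Theorem~1 in \citet{chugg2023unified}, whose proof follows exactly the mixture-plus-Ville-plus-Donsker--Varadhan scheme you outline. Your flagged caveats (joint measurability for Fubini, the implicit normalization $\E[Q_1(\theta)]\le 1$ so that $\E[M_1]\le 1$, and the $\log 0=-\infty$ and $\rho\not\ll\nu$ edge cases) are apt and are handled the same way in that reference.
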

Here, $\Mspace{\Theta}$ is the set of distributions over $\Theta$ and $\kl(\cdot\|\cdot)$ is the KL divergence, which we recall is defined as $\kl(\rho\|\nu) = \int_\Theta \log (\frac{\d\rho}{\d\nu})\d\rho$ if $\rho\ll\nu$ and $\infty$ otherwise. The uniformity over time and posteriors refers to the form $\P(\forall t, \forall \rho,  
\int_\Theta \log Q_t(\theta)\rho(\d\theta)\leq \kl(\rho\|\nu) + \log(1/\alpha))
\geq 1-\alpha$ of the above bound, where the quantifiers are inside the probability.

At its core, our approach involves a judicious selection of $\Theta$, $Q$, family of posteriors $\{\rho\}$ and then applying Proposition~\ref{prop:pac-bayes-supermart}. 
We will consider either uniform, truncated Gaussian, or Gaussian distributions over the parameter space $\Theta$, which we will take to be either a subset of $\sd:=\{\bs{x}\in\Re^d:\norm{\bs{x}}=1\}$, \edit{$\bd:=\{\bs{x}\in\Re^d:\norm{\bs{x}}\le 1\}$,} or $\Re^d$. 
The choice of $\Theta$ is driven by the availability of appropriate supermartingales over $\Theta$. 
We will typically choose supermartingales $Q_t(\theta)$ to be functions of $\sum_{i\leq t}\la \thetavec, f_i(\Xvec_i) - \muvec\ra$ for some functions $f_i$, with the aim of obtaining bounds on $\sup_{\varthetavec\in\sd}\la \varthetavec, \sum_i (f_i(\Xvec_i) - \muvec)\ra)=\norm{\sum_i (f_i(\Xvec_i) - \muvec)}_2$, which will furnish a CSS. 

\edit{For example, for iid $X_1,X_2,\dots$ which are $\Sigma$-sub-Gaussian, the process defined by $M_t(\theta) = \prod_{i\leq t} \exp\{\lambda \la \theta, X_i - \mu\ra - \frac{\lambda^2}{2} \la \theta,\Sigma\theta\ra\}$ for any $\lambda\geq 0$ is a nonnegative supermartingale for all $\theta\in\Re^d$. Let $\rho_\theta$ be a Gaussian with mean $\thetavec$ and covariance $\beta^{-1}I_d$ for some scalar $\beta>0$. Applying Proposition~\ref{prop:pac-bayes-supermart} to $(M_t(\theta))$ with $\Theta=\Re^d$, prior $\nu = \rho_0$ and posteriors $\rho_\varthetavec$ for all $\varthetavec\in\sd$, we obtain that $\P(\forall t\geq 1, \forall \varthetavec\in\sd: \sum_{i\leq t} \lambda \la \varthetavec, X_i - \mu\ra \leq t\frac{\lambda^2}{2} \E_{\rho_\varthetavec}\la \theta, \Sigma\theta\ra + \beta/2 + \log(1/\alpha)) \geq 1-\alpha$, where we've calculated $\kl(\rho_\varthetavec\|\nu) = \beta/2$. Using that $\sup_{\varthetavec\in\sd} \la \varthetavec, X_i -\mu\ra = \|X_i-\mu\|$ and noting that  $\E_{\rho_\varthetavec}\la \theta,\Sigma\theta\ra = \la \varthetavec,\Sigma\varthetavec\ra + \Tr(\Sigma)/\beta \leq \|\Sigma\| + \Tr(\Sigma)/\beta$, we find that with probability $1-\alpha$, for all $t\geq 1$, 
\begin{equation}
    \bigg\|\frac{1}{t}\sum_{i\leq t}X_i - \mu\bigg\| \leq \frac{\lambda}{2}\left( \|\Sigma\| + \frac{\Tr(\Sigma)}{\beta}\right) + \frac{\beta/2 + \log(1/\alpha)}{t\lambda}.
\end{equation}
The parameters $\beta,\lambda>0$ may then be chosen to optimize the width of the bound. 
Subsequent sections will expand on this example, relaxing both the distributional and dependence assumptions among the observations, and introducing various methods to ensure the width of the bound shrinks to zero over time (either via stitching or predictable plug-ins).} Throughout, the norm $\norm{\cdot}$ should be taken to be the $\ell^2$-norm $\norm{\cdot}_2$ when applied to vectors, and the operator norm when applied to matrices.

\subsection{Assumptions}
\label{sec:assumptions}
The most common assumption in the literature is that the data are independently and identically distributed (iid). This is the case in most prior work on estimating means (cf.  \citealp{catoni2017dimension,lugosi2019sub,devroye2016sub,joly2017estimation}). 
\begin{assumption}
\label{assump:iid}
    $\Xvec_1,\Xvec_2,\dots$ are iid with mean $\muvec$. 
\end{assumption}
In this work, we are able to weaken this condition.  We enforce only that the data stream has a constant \emph{conditional} mean $\muvec$. This allows for more ``adversarial'' distributions, and is particularly relevant in several bandit and online learning settings in which the iid assumption does not hold. 
\begin{assumption}
\label{assump:ccm}
    $\Xvec_1, \Xvec_2,\dots$ have constant conditional mean $\E[\Xvec_t|\F_{t-1}]=\muvec$. 
\end{assumption}
Of course, Assumption~\ref{assump:iid} implies Assumption~\ref{assump:ccm}. 
Assumption~\ref{assump:ccm} is common in work on CSs, typically because they rely on (super)martingales which naturally enable such flexibility. We will often refer to Assumption~\ref{assump:ccm} as \emph{martingale-dependence}. 
While \citet{abbasi2011improved} and \citet{whitehouse2023time} also work under more general conditions than iid data, we are---as far as we are aware---the first to do so in multivariate, heavy-tailed settings.

\section{Light-tailed random vectors}
\label{sec:light-tailed}

\subsection{Sub-Gaussian bounds}
\label{sec:sub-gaussian}
We say that $\Xvec_t$ is $\Sigmavec_t$-sub-Gaussian for some $\F_t$-measurable PSD matrix $\Sigma_t$ if 
\begin{equation}
\label{eq:subG-condition}
  \E_P [\exp\{\lambda \la \thetavec, \Xvec_t - \muvec\ra - \psi_N(\lambda)\la \theta, \Sigma_t\theta\ra \}|\F_{t-1}] \leq 1, \text{~ for all 
 }\thetavec\in \rd \text{ and } \lambda\in\Re,
\end{equation}
where $\psi_N(\lambda) = \lambda^2/2$ and $\Sigmavec_t$ is PSD.  
The subscript ``N'' refers to the tail condition of a normal distribution and can be contrasted with other tail conditions investigated in Section~\ref{sec:sub-psi}. 
Unlike typical notions of sub-Gaussianity,~\eqref{eq:subG-condition} allows $(\Sigma_t)$ to be an $(\F_t)$-\emph{adapted} sequence, not simply a predictable sequence. This allows $\Sigma_t$ to depend, for instance, on $X_t$. E.g., if $\E[\|X_t - \mu\|^2|\F_{t-1}]<\infty$, then we may take $\Sigma_t = \frac{1}{3}(\sum_{i\leq t} \|X_i - \mu\|^2 + \sum_{i\leq t} \E[\|X_i - \mu\|^2|\F_{i-1}]$ (see Lemma 3 part (f) in \citealt{howard2020time}). This example cannot be handled if $\Sigma_t$ must be $\F_{t-1}$-measurable. 

We note that many authors impose isotropic conditions on the distribution when defining sub-Gaussianity, meaning that they take $\Sigmavec_t=\sigma_t^2\bs{I}_d$. However, allowing for anisotropy will enable us to give several dimension-free bounds that will depend on $\Sigmavec_t$ instead of $d$. 
As discussed in the introduction, such results are desirable when the data have low intrinsic dimension compared to the dimension of the ambient space. \edit{In Appendix~\ref{sec:additional_sub-g} we give the resulting bound when the distribution is isotropic (i.e., $\Sigma_t = \sigma_t^2 I_d$).}

\begin{remark}
The inequality in~\eqref{eq:subG-condition} is often written as holding for all $\thetavec\in\sd$ instead of $\thetavec\in\rd$. The statements are, of course, equivalent (though they cease to be for different $\psi$-functions; see Section~\ref{sec:sub-psi}) but we wrote~\eqref{eq:subG-condition} as we did to highlight that we can take our parameter space as $\Theta=\rd$. To elaborate,~\eqref{eq:subG-condition} naturally defines a supermartingale for each $\thetavec\in\rd$. When applying Proposition~\ref{prop:pac-bayes-supermart}, we can thus use Gaussians as the prior and posterior distributions as opposed to restricting ourselves to uniform distributions or truncated Gaussians as in Sections~\ref{sec:log-concave} and~\ref{sec:sub-psi}. 
\end{remark}

Let $\rho_{\varthetavec}$ be a Gaussian with mean $\varthetavec$ and covariance $\beta^{-1}\bs{I}_d$ for some real $\beta>0$. 
The process defined by $M_t(\thetavec) = \prod_{i=1}^t \exp\{\lambda_i\la \thetavec, \Xvec_i-\muvec\ra - \psi_N(\lambda_i)\la \thetavec, \Sigmavec_i\thetavec\ra\}$ is a nonnegative $P$-supermartingale for all $\thetavec\in\Re^d$ by~\eqref{eq:subG-condition}. 
Applying Proposition~\ref{prop:pac-bayes-supermart} to this family of processes with prior $\rho_{\bs{0}}$ and posteriors $\rho_{\varthetavec}$ for $\varthetavec\in\sd$, one obtains the following result. The details may be found in Appendix~\ref{sec:proofs-light-tailed}. 
\begin{theorem}
\label{thm:sub-gaussian-dim-free}
    Let $(\Xvec_t)_{t\geq 1}$ be conditionally $(\Sigmavec_t)$-sub-Gaussian and satisfy Assumption~\ref{assump:ccm}.  Let $(\lambda_t)$ be a predictable sequence in $(0,\infty)$ and fix $\beta>0$. Then, with probability $1-\alpha$, simultaneously for all $t\geq 1$, 
\begin{equation*}
    \norm{\frac{\sum_{i\leq t} \lambda_i \bs{X}_i}{\sum_{i\leq t}\lambda_i} - \muvec} \leq \frac{\sum_{i\leq t}\psi_N(\lambda_i)(\norm{\Sigmavec_i} + \beta^{-1}\Tr(\Sigmavec_i)) +   \beta/2 + \log(1/\alpha)}{\sum_{i\leq t}\lambda_i}.
    \end{equation*}
\end{theorem}
Let us turn straightaway to the question of how to choose $\beta$ and $\lambda_t$ above. 
If $\sup_t\norm{\Sigmavec_t} \leq \norm{\Sigmavec}<\infty$ and $\sup_t\Tr(\Sigmavec_t) \leq \Tr(\Sigmavec)<\infty$, then consider taking 
\begin{equation}
\label{eq:subG-parameters}
    \beta = \sqrt{\frac{2\Tr(\Sigmavec) \log(1/\alpha)}{\norm{\Sigmavec}}} \text{~ and ~} \lambda_t = \sqrt{\frac{\beta + 2\log(1/\alpha)}{f_\beta(\Sigmavec) t\log (t+1)}}
\end{equation}
where $f_\beta(\Sigmavec) = \norm{\Sigmavec} + \Tr(\Sigmavec)/\beta$. Using that $\sum_{i\leq t} (i\log (i+1))^{-1/2} \asymp \sqrt{t/\log(t)}$ and $\sum_{i\leq t} (i\log(i+1))^{-1} \asymp \log\log(t)$, we see that the width of the CSS in Theorem~\ref{thm:sub-gaussian-dim-free} is
\begin{equation}
\label{eq:subG-width}
    \widetilde{O}\left(\sqrt{\Tr(\Sigmavec) + \norm{\Sigmavec}\log(1/\alpha)}\sqrt{\frac{\log t}{t}}\right).
\end{equation}
where $\widetilde{O}$ hides iterated logarithm factors.
If $\sup_t \norm{\Sigmavec_t}$ or $\sup_t \Tr(\Sigmavec_t)$ are unbounded, then we cannot guarantee that the width will shrink to zero. We can obtain iterated logarithm rates (i.e., $\log\log t$ instead of $\log t$) in~\eqref{eq:subG-width} with a technique known as stitching~\citep{howard2021time}. This is similar to the doubling technique of \citet{duchi2024information}, but relies on applying Theorem~\ref{thm:sub-gaussian-dim-free} in each epoch, thus resulting an estimator which is updated at every timestep, and not just at times $2^k$. Details may be found in Appendix~\ref{app:stitching}. Note that Theorem~\ref{thm:lil-subG} posits that $\Sigma_t=\Sigma$ for all $t$, thus assuming that $\Sigma$ is non-random. 

\begin{theorem}
\label{thm:lil-subG}
Let $(X_t)_{t\geq 1}$ be conditionally $\Sigmavec$-sub-Gaussian and satisfy Assumption~\ref{assump:ccm}. Then, with probability $1-\alpha$, simultaneously for all $t\geq 1$, 
\begin{equation}
    \norm{\frac{1}{t}\sum_{i\leq t}\Xvec_i - \muvec} \leq 1.21\sqrt{\frac{\Tr(\Sigmavec)}{t}} + 1.682\sqrt{\frac{\|\Sigma\|(\log(1.65/\alpha) + 2\log(\log_2(t)+1))}{t}}. 
\end{equation}
\end{theorem}

Figure~\ref{fig:subG} plots the width of the CSSs in Theorems~\ref{thm:sub-gaussian-dim-free} and \ref{thm:lil-subG}. We compare our results to the state-of-the-art fixed-time result of \cite{hsu2012tail} (described below; see~\eqref{eq:hsu}), which we make time-uniform in two ways: via a naive union bound, and by the doubling technique of \citet{duchi2024information}. 
As expected, a naive union bound is outperformed by all other bounds.  
Theorem~\ref{thm:lil-subG} uniformly dominates Theorem~\ref{thm:sub-gaussian-dim-free} at all time steps. Neither \citeauthor{hsu2012tail} with doubling nor Theorem~\ref{thm:lil-subG} uniformly dominate the other, except for small values $\Tr(\Sigma^2)/\Tr(\Sigma)$, in which case the former begins to dominate. Note however that $\Tr(\Sigma^2) \geq \frac{2}{1 + \sqrt{d}}\Tr(\Sigma)\|\Sigma\|$ (see Lemma~\ref{lem:trSigma2} in Appendix~\ref{sec:proofs-light-tailed}), so that values of $\Tr(\Sigma^2)/\Tr(\Sigma)$ close to zero require large dimension.

\begin{remark}
A dissatisfying aspect of Theorem~\ref{thm:sub-gaussian-dim-free} result is that $\beta$ is fixed while $\Sigmavec_t$ can vary. 
Ideally, one could allow $\beta=\beta_t$ to be a function of $t$ and take $\beta_t \asymp \sqrt{\Tr(\Sigmavec_t)}$. In fact, such a choice is possible if $\Sigmavec_t$ is predictable---the general version of Proposition~\ref{prop:pac-bayes-supermart} stated in \citet{chugg2023unified} allows us to choose predictable posteriors. However, since the covariance of the prior is fixed, this strategy results in a factor of $d$ in the KL-divergence. Our bounds are therefore most useful when upper bounds on both $\Tr(\Sigmavec_t)$ and $\norm{\Sigmavec_t}$ across all $t$ are known, in which case $\beta$ and $\lambda_t$ can be optimized as above. 
\end{remark}

\begin{figure}
    \centering
    \includegraphics[width=0.44\linewidth]{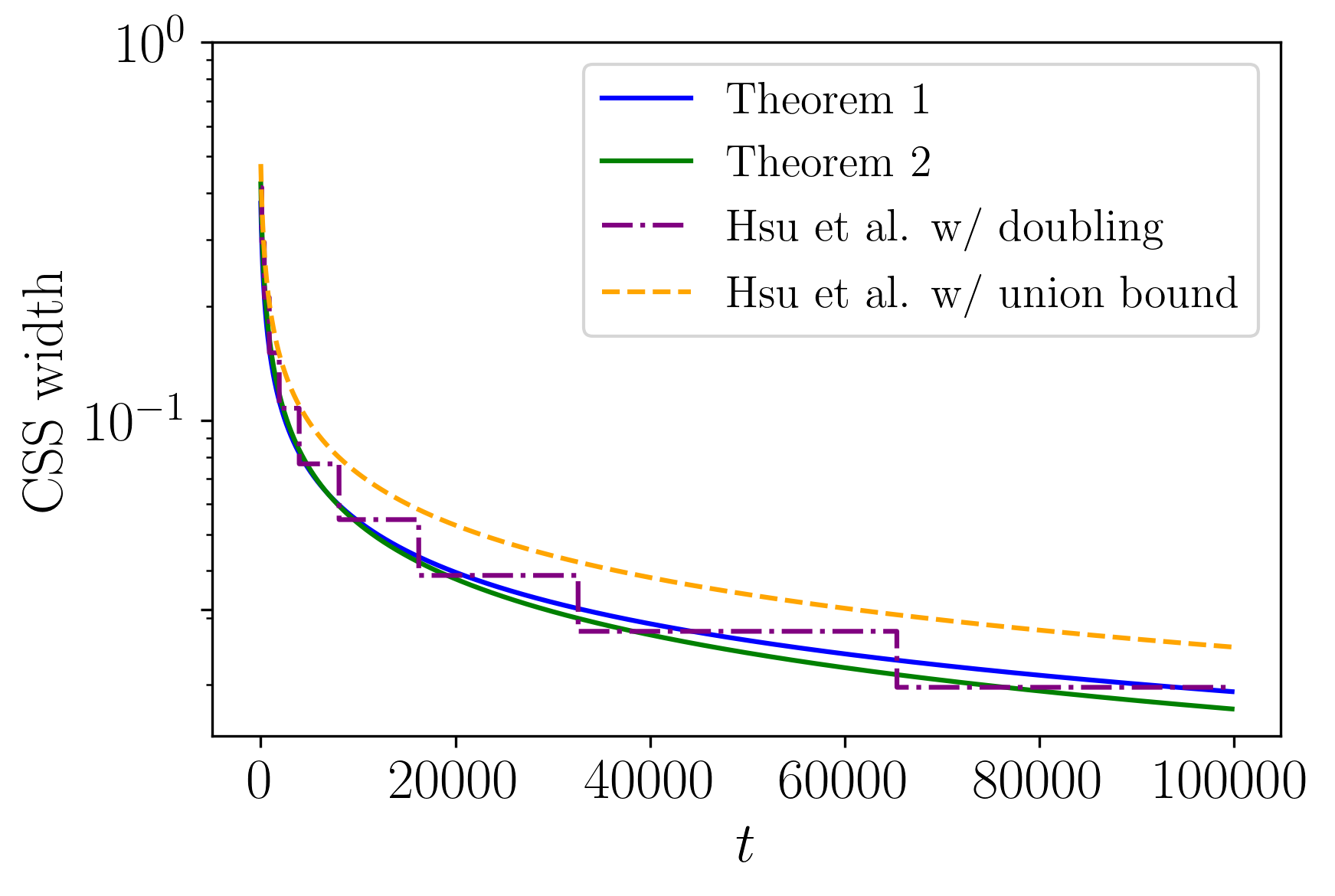}
    \includegraphics[width=0.44\linewidth]{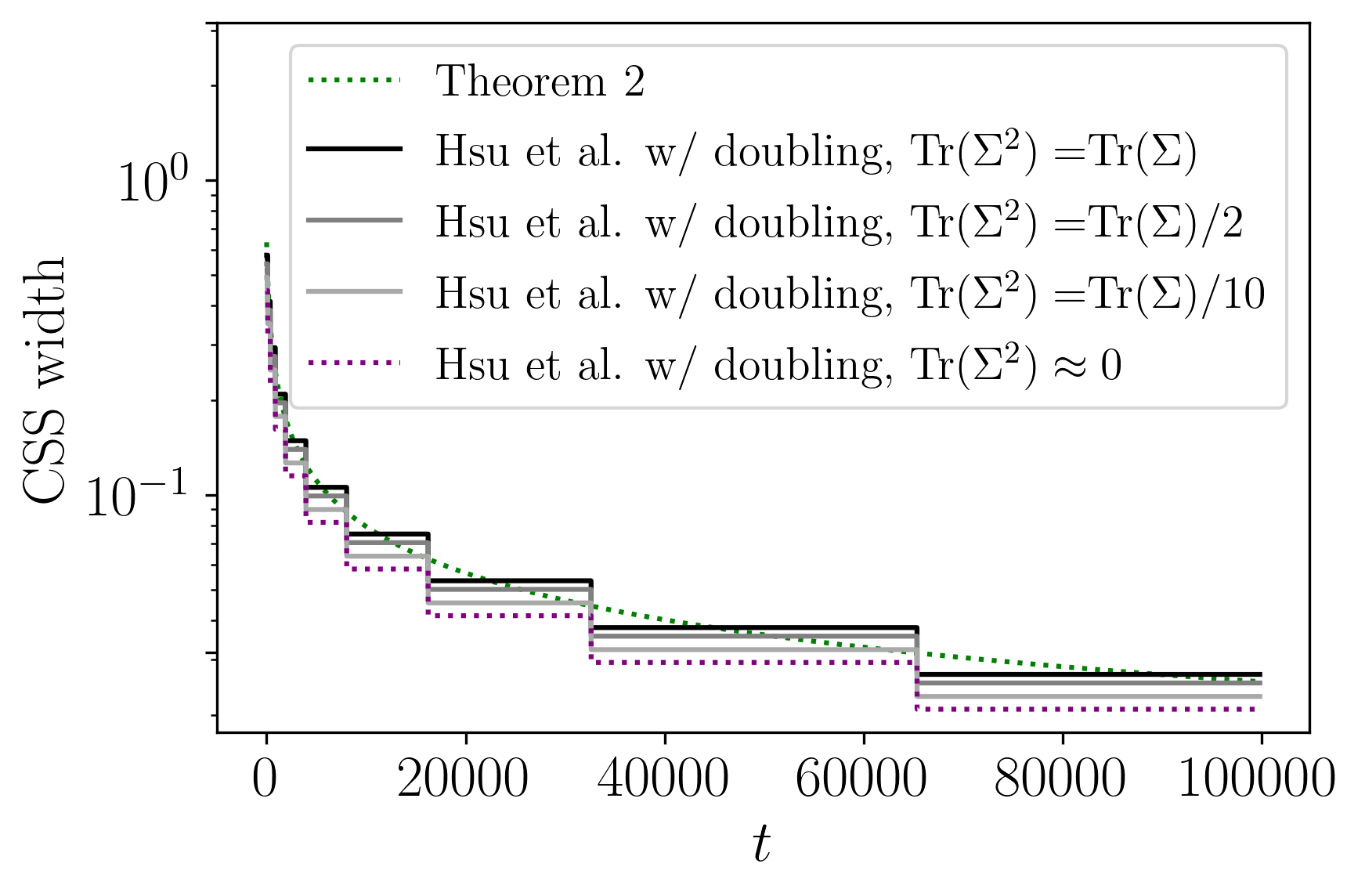}
    \caption{{\bf Left:} Comparison of Theorem~\ref{thm:sub-gaussian-dim-free} and its stitched version, Theorem~\ref{thm:lil-subG},  against the results of \citet{hsu2012tail}. The latter is made time-uniform in two ways: by a union bound (dotted orange line) and by the doubling technique of \citet{duchi2024information} (dotted purple). We begin the plotting the width at $t=150$ for scale purposes. We fix $\|\Sigma\|=1$ and take $\Tr(\Sigma) = 5$.  
    {\bf Right:} A comparison of estimators with iterated logarithm rates. We plot Theorem~\ref{thm:lil-subG} against the bound of \citeauthor{hsu2012tail}---given iterated logarithm rates via \citet{duchi2024information}---for various ratios of $\Tr(\Sigma^2)$ to $\Tr(\Sigma)$. As $\Tr(\Sigma^2)$ shrinks Theorem~\ref{thm:lil-subG} starts to be dominated by the bound of \citeauthor{hsu2012tail}. 
    Simulation details can be found in Appendix~\ref{sec:experiments}.}
    \label{fig:subG}
\end{figure}

\paragraph{Fixed-time optimization.}
Next, let us instantiate Theorem~\ref{thm:sub-gaussian-dim-free} optimized for a specific time $t=n$ with $\Sigmavec_t=\Sigmavec$ for all $t$. Some elementary calculus suggests that setting 
\begin{equation*}
    \lambda_i  = \lambda := \sqrt{\frac{\beta + 2\log(1/\alpha)}{n(\Tr(\Sigmavec)\beta^{-1}+\norm{\Sigmavec})}},\quad \beta := \sqrt{\frac{2\Tr(\Sigmavec)\log(1/\alpha)}{\norm{\Sigmavec}}}.
\end{equation*}
results in the optimal width of the boundary. This gives: 
\begin{corollary}
\label{cor:fixed-time-subg}
    Let $(\Xvec_t)_{t\geq 1}$ be $\Sigmavec$-sub-Gaussian and satisfy Assumption~\ref{assump:ccm}.  
    Fix some $n\in\mathbb{N}$. Then, with probability $1-\alpha$, simultaneously for all $t\geq 1$,
\begin{equation}
\label{eq:fixed-time-subg}
\norm{\frac{1}{t}\sum_{i=1}^t \Xvec_i - \muvec} \leq \left(\frac{1}{2\sqrt{n}} + \frac{\sqrt{n}}{2t}\right)\left(\sqrt{\Tr(\Sigmavec)} + \sqrt{2\norm{\Sigmavec}\log(1/\alpha)} \right).
% \norm{\frac{1}{t}\sum_{i=1}^t \Xvec_i - \muvec} \leq \left(\frac{1}{2\sqrt{n}} + \frac{\sqrt{n}}{2t}\right)\sqrt{\Tr(\Sigmavec) + 2\norm{\Sigmavec}\log(1/\alpha) + 2\sqrt{2\Tr(\Sigmavec)\norm{\Sigmavec}\log(1/\alpha)}}.
    \end{equation}
\end{corollary}
Corollary~\ref{cor:fixed-time-subg} may strike the reader as somewhat odd at first glance. The bound is optimized for a particular sample size $n$ but remains valid at all times due to the time-uniformity of Theorem~\ref{thm:sub-gaussian-dim-free}. However, the bound is tightest at $t=n$  and the width does not to go zero as $t\to\infty$. Thus while it is a time-uniform bound, it is most useful at sample sizes that are close to $n$. It might be compared to Freedman-style inequalities \citep{freedman1975tail} (which hold for all $t\leq n$), or de la P\~{e}na style inequalities~\citep{de1999general} (which hold for all $t\geq n$ but also do not shrink to zero). 

\citet{hsu2012tail} prove that for iid  $\Sigmavec$-sub-Gaussian random vectors $X_1,\dots,X_n$, with probability $1-\alpha$, 
\begin{equation}
    \label{eq:hsu}
    \bigg\|\frac{1}{n}\sum_{i\leq n}X_i-\muvec\bigg\| \leq \sqrt{\frac{\Tr(\Sigmavec) + 2\norm{\Sigmavec}\log(1/\alpha) + 2\sqrt{\Tr(\Sigmavec^2)\log(1/\alpha)}}{n}}.
\end{equation}
Instantiating Corollary~\ref{cor:fixed-time-subg} at $t=n$, we see that the width of~\eqref{eq:fixed-time-subg} differs from~\eqref{eq:hsu} by replacing  $\sqrt{2\Tr(\Sigmavec)\norm{\Sigmavec}\log(1/\alpha)}$ with $2\sqrt{\Tr(\Sigmavec^2)\log(1/\alpha)}$ (this is clear after writing $\sqrt{\Tr(\Sigma)} + \sqrt{2\|\Sigma\|\log(1/\alpha)} = \sqrt{\Tr(\Sigmavec) + 2\norm{\Sigmavec}\log(1/\alpha) + 2\sqrt{2\Tr(\Sigmavec)\norm{\Sigmavec}\log(1/\alpha)}}$). The bound in~\eqref{eq:fixed-time-subg} is therefore somewhat slightly looser than $\eqref{eq:hsu}$ at $t=n$, but the bounds differ by at most a factor of two as demonstrated by the following lemma. Despite this looseness, we emphasize that our bound holds under martingale dependence, is sequential, and performs better than a sequentialized version of \eqref{eq:hsu} in many regimes (see Figure~\ref{fig:subG}). 
\begin{lemma}
    Let $W_n$ be the right hand side of \eqref{eq:fixed-time-subg} at $t=n$ and let $H_n$ be the right hand side of \eqref{eq:hsu}. Then $W_n / H_n \leq 2$. 
\end{lemma}
\begin{proof}
    We have $\sqrt{n}W_n = \sqrt{\Tr(\Sigma)} + \sqrt{2\|\Sigma\|\log(1/\alpha)} \leq 2\max\{\sqrt{\Tr(\Sigma)}, \sqrt{2\|\Sigma\|\log(1/\alpha)}\}$.  
    \edit{ Further, note that
       $\sqrt{n} H_n = \sqrt{\Tr(\Sigmavec) + 2\norm{\Sigmavec}\log(1/\alpha) + 2\sqrt{\Tr(\Sigmavec^2)\log(1/\alpha)}} \ge \sqrt{\Tr(\Sigmavec) + 2\norm{\Sigmavec}\log(1/\alpha)}.$}
     Using that $\max\{\sqrt{a},\sqrt{b}\}\leq \sqrt{a +b}$ completes the argument. 
\end{proof}

 When $\Sigmavec=\sigma^2\bs{I}$, 
Corollary~\ref{cor:fixed-time-subg} improves the constants in a classical concentration result for sub-Gaussian random vectors, which states that, with probability $1-\alpha$, $\norm{\frac{1}{n}\sum_i \Xvec_i-\muvec}\leq 4\sigma \sqrt{d}/\sqrt{n} + 2\sigma\sqrt{\log(1/\alpha)}/\sqrt{n}$ \citep[Theorem 1.19]{rigollet2023high}. 
We also remark that \citet{jin2019short} study concentration under the weaker assumption of so-called \emph{norm}-sub-Gaussianity. When their result is translated into our setting, one obtains that with probability $1-\alpha$, $\norm{\frac{1}{n}\sum_i \Xvec_i-\muvec} = O(\sigma\sqrt{d\log(2d/\alpha)/n})$. This is worse than Corollary~\ref{cor:fixed-time-subg}, though in general the two bounds are incomparable, holding for different assumptions.

\subsection{Log-concave distributions and finite Orlicz norm}
\label{sec:log-concave}
If $P$ is a distribution whose density $f(x)$ can be written as $f(x) = \exp(\phi(x))$ for some concave function $\phi$ then we say that $P$ is a \emph{log-concave} distribution. Many popular distributions are log-concave: the Gaussian, Laplace, logistic, uniform, Dirichlet, gamma, and beta distributions being several examples. Here we extend a recent observation of \citet{zhivotovskiy2024dimension} and demonstrate that all log-concave distributions admit a dimension-free CSS. 

In fact, we deal with distributions slightly more general than log-concave distributions. We consider sequences of observations $(X_t)_{t\geq 1}$ with a finite conditional $\Phi_1$-Orlicz norm: 
\begin{equation}
\label{eq:concave-orlicz}
    \norm{\la \thetavec, \Xvec_t - \E_P[\Xvec_t|\F_{t-1}]\ra}_{\Phi_1}\leq  C \sqrt{\la \thetavec, \Sigmavec\thetavec\ra}, \quad \text{for all }\theta\in\Re^d, 
\end{equation}
where we recall that for a scalar-valued random variable $Y$, 
\begin{equation*}
    \norm{Y}_{\Phi_1} = \inf\left\{\eps>0: \E_P\exp(\frac{|Y|}{\eps}) \leq 2\right\}.
\end{equation*} 
In~\eqref{eq:concave-orlicz}, $\Sigma$ is assumed to be PSD and deterministic (unlike the sub-Gaussian case). 
If $P$ is log-concave, then \citet[Lemma 2.3]{adamczak2010quantitative} demonstrates that it obeys \eqref{eq:concave-orlicz}.\footnote{They state the result for all $\theta\in\sd$, but this is easily seen to be equivalent to~\eqref{eq:concave-orlicz}.} 
For Gaussians, the constant is $C=2$ (in fact, $C=2$ for all symmetric distributions, see~\citealt{adamczak2010quantitative}). 
Sub-exponential distributions also satisfy~\eqref{eq:concave-orlicz}. Indeed, \eqref{eq:concave-orlicz} is another way to define sub-exponential distributions, and the results in this section thus provide dimension-free CSSs for such distributions. However, for \emph{isotropic} sub-exponential distributions, the results are sub-optimal compared to those in the next section (Theorem~\ref{thm:lil-sub-exp} in particular). We will comment more on the relationship in Section~\ref{sec:sub-psi}. 

Unlike sub-Gaussian distributions, the log-concave condition does not immediately imply a supermartingale which we can deploy in Proposition~\ref{prop:pac-bayes-supermart}.  Instead we consider a process which has the form $\prod_{i\leq t} \exp\{g_i(\thetavec) - \log \E \exp g_i(\thetavec) \}$ which is a martingale whenever $\log  \E[\exp(g_i(\thetavec))]$ is finite. 
In order to bound this log-mgf term, we appeal to the following lemma, which is standard but proved explicitly by \citet[Lemma 2]{zhivotovskiy2024dimension} (see also \citealt[Proposition 1]{howard2020time}). 
\begin{lemma}
    \label{lem:psi1_bound}
For a scalar random variable $Y$ such that $\norm{Y - \E Y}_{\Phi_1}<\infty$, we have 
\begin{equation*}
    \E\exp(\lambda (Y - \E Y)) \leq \exp(4\lambda^2 \norm{Y - \E Y}^2_{\Phi_1}), \text{~ for all ~} |\lambda| \leq \frac{1}{2\norm{Y - \E Y}_{\Phi_1}}.
\end{equation*}
\end{lemma}
We deploy Lemma~\ref{lem:psi1_bound} with $Y = g_i(\thetavec)$. It's tempting to consider $g_i(\thetavec) = \lambda_i \la \thetavec, \Xvec_i -\muvec\ra$ and to mimic the approach used in the sub-Gaussian case.  Unfortunately, this strategy breaks down because $\norm{\la \thetavec, \Xvec_t - \muvec\ra}_{\Phi_1}$ may be arbitrarily large for certain choices of $\thetavec$, thus rendering Lemma~\ref{lem:psi1_bound} inapplicable for any $\lambda$. Therefore, we cannot use Gaussian distributions in Proposition~\ref{prop:pac-bayes-supermart} as was done in the sub-Gaussian case. Instead, following an approach used by \citet{zhivotovskiy2024dimension} in the case of random matrices, we use a \emph{truncated} Gaussian with a well-chosen radius. This, combined with a clever choice of $g_i(\thetavec)$ allows us to provide the following dimension-free bound. 
The details may be found in Appendix~\ref{sec:proofs-light-tailed}. \edit{Let us define the function $h_\Sigma(u) := \sqrt{\Tr(\Sigmavec)u} + u\sqrt{\norm{\Sigmavec}}$ for $u>0$.}
\begin{theorem}
    \label{thm:log-concave}
Let $(\Xvec_t)_{t\geq 1}$ satisfy \eqref{eq:concave-orlicz} and Assumption~\ref{assump:ccm}. Let $(\lambda_t)$ be a predictable sequence taking values in $(0,1)$. Then with probability $1-\alpha$, simultaneously for all $t\geq 1$, 
\begin{equation}
\label{eq:log-concave-boundary}
    \norm{\frac{\sum_{i\leq t}\lambda_i \Xvec_i}{\sum_{i\leq t}\lambda_i} - \muvec} \leq \frac{2C h_\Sigma(1)\sum_{i\leq t}\lambda_i^2 + 4C h_\Sigma(\log(2/\alpha))}{\sum_{i\leq t}
    \lambda_i}.
\end{equation}
\end{theorem}
Note that while log-concavity implies that some such $C$ exists, the bounds are unusable without knowledge of $C$. This is similar to needing to know the sub-Gaussian constant or variance bound in order to be able to construct finite-sample-valid confidence intervals. 

\edit{Noting that $h_{\Sigma}(\cdot)$ is monotone, our bound recovers the one by \citet{zhivotovskiy2024dimension} at $t=1$ with $\lambda_1=1$}: $2Ch_\Sigma(1) + 4Ch_\Sigma(\log(2/\alpha)) \leq 6Ch_\Sigma(\log(2/\alpha))$ \edit{assuming $\alpha \le 2e^{-1}$. Further,} 
following arguments in that work, \edit{this} can be seen to be optimal in terms of dependence on $\log(1/\alpha),\Tr(\Sigmavec)$, and $\norm{\Sigmavec}$. \edit{Theorem~\ref{thm:log-concave} is also related to \citet[Proposition 7.1]{maurer2021concentration}, which uses entropy methods to prove that $\|\frac{1}{n}\sum_i X_i - \mu\| \leq 8e \| \| X\|_2 \|_{\Phi_1} \sqrt{\log(2/\delta)/n}$ for iid $X_1,\dots,X_n$. One can show that $\| \|X \|_2 \|_{\Phi_1} \lesssim \sqrt{\Tr(\Sigma)}$, so this bound is of the same order as ours, the width of which we now analyze. } 

Consider 
\[\lambda_t \asymp \sqrt{\frac{h_\Sigma(u_\alpha)}{h_\Sigma(1) t\log(t+1)}},\]
where $u_\alpha := \log(2/\alpha)$. 
\edit{If $\alpha \le 2e^{-1}$, then
\begin{align*}
    h_\Sigma(u_\alpha)h_\Sigma(1) &=
    (\sqrt{\Tr(\Sigmavec)u_\alpha} + u_\alpha \sqrt{\norm{\Sigmavec}})(\sqrt{\Tr(\Sigmavec)} + \sqrt{\norm{\Sigmavec}})
    \\
    &= \Tr(\Sigma)\sqrt{u_\alpha} + (u_{\alpha} + \sqrt{u_{\alpha}})
 \sqrt{\Tr(\Sigma)\|\Sigma\|} +  u_{\alpha} \| \Sigma \|\\
    &\leq \Tr(\Sigma)\sqrt{u_\alpha} + 3u_\alpha\sqrt{\Tr(\Sigma)\|\Sigma\|},
\end{align*}
where the inequality follows from $u_{\alpha} \ge 1$ and $\Tr(\Sigma) \ge \| \Sigma \|$.} With these choices, the CSS in Theorem~\ref{thm:log-concave} has width
\begin{equation}
    \widetilde{O}\left(C\sqrt{ (\Tr(\Sigma)\sqrt{u_\alpha} + u_\alpha\sqrt{\Tr(\Sigma)\|\Sigma\|})}\sqrt{\frac{\log t}{t}}\right),
\end{equation}
where $C$ is the log-concave constant. Here we've once again used the fact that $\sum_{i\leq t} (i\log(i+1))^{-1}\asymp \log\log t$ and $\sum_{i\leq t} (\sqrt{i\log(i+1)})^{-1/2}\asymp \sqrt{t\log t}$. As in the sub-Gaussian case, we can also use stitching to provide a bound which shrinks at the optimal iterated logarithm rate. The details may be found in Appendix~\ref{app:stitching}. 
\begin{theorem}
    \label{thm:lil-log-concave}
    Let $(X_t)_{t\geq 1}$ satisfy~\eqref{eq:concave-orlicz} and Assumption~\ref{assump:ccm}. For any $\alpha\in(0,1)$ and $t\geq 1$, set $r_\alpha(t) := 2\log(\log_2(t) + 1) + \log(3.3/\alpha)$. Then, 
    with probability $1-\alpha$, simultaneously for all $t$ such that $t \geq 2\sqrt{\frac{h_\Sigma(r_\alpha(t))}{h_\Sigma(1)}}$, 
    \begin{equation}
        \norm{\frac{\sum_{i\leq t}\lambda_i \Xvec_i}{\sum_{i\leq t}\lambda_i} - \muvec} \leq 6.73 C\sqrt{\frac{h_\Sigma(1)h_\Sigma(r_\alpha(t))}{t}} \leq 6.73 C\sqrt{\frac{\Tr(\Sigma)\sqrt{r_\alpha(t)} + 3r_\alpha(t)\sqrt{\Tr(\Sigma)\|\Sigma\|}}{t}}.
    \end{equation}
\end{theorem}

\subsection{Sub-$\psi$ distributions}
\label{sec:sub-psi}
Other light-tailed conditions on random vectors can be captured by assuming the log-MGF is bounded by some CGF-like function. More precisely, we assume that there exists a function $\psi:[0,\lambda_{\max}) \to \Re$ for some $\lambda_\max \in (0,\infty]$ such that 
\begin{equation}
\label{eq:sub-psi}
    \E_P[\exp\{\lambda \la \bs{v},\bs{X}_t -\muvec\ra\} - \psi(\lambda)\la v,\Sigma_t v\ra|\F_{t-1}]\leq 1 \quad \text{for all } \bs{v}\in\sd, 
\end{equation}
and  all $\lambda\in [0,\lambda_{\max})$. Condition~\eqref{eq:sub-psi} is called a sub-$\psi$ process by \citet{howard2020time} and \citet{whitehouse2023time}.  As in the sub-Gaussian case, we assume that $\Sigma_t$ is PSD and $\F_t$-measurable (i.e., $(\Sigma_t)$ is $(\F_t)$-adapted not only predictable). 
Different choices of $\psi$ recover common distributions (terminology is borrowed from \citet{howard2020time}; we take $1/0=\infty$):   
\begin{enumerate}
    \item $\psi_N(\lambda) := \lambda^2/2$ for $\lambda\in[0,\lambda_{\max})$ results in a sub-exponential distribution (sub-Gaussian if $\lambda_\max =\infty)$. In this case we say that $(X_t)$ is $(\Sigma_t,\lambda_\max)$-sub-exponential.  
    \item $\psi_{G,c}(\lambda) =  \frac{\lambda^2}{2(1-c\lambda)}$ for $c\in\Re$ and $\lambda_\max = 1/\max\{c,0\}$ results in  a sub-gamma distribution (in the sense that $\psi_{G,c}$ is an \emph{upper bound} on the CGF of a gamma random variable). In this case we say that $(X_t)$ is $(\Sigma_t,c)$-sub-gamma.  
    \item $\psi_{P,c}(\lambda) = (e^{c\lambda} - c\lambda -1)/c^2$ for  $c\in\Re$ and $\lambda_\max = \infty$ results in a sub-Poisson distribution (in the sense that $\psi_{P,c}$ is the CGF of a
centered unit-rate Poisson random variable). In this case we say that $(X_t)$ is $(\Sigma_t,c)$-sub-Poisson.  
    \item $\psi_{E,c}(\lambda) = (-\log(1-c\lambda) - c\lambda)/c^2$ for $c\in\Re$ where $\lambda_\max =1/\max\{c,0\}$ results in a sub-neg-exponential distribution ($\psi_{E,c}$ is the CGF of a centered unit-rate negative-exponential random variable).\footnote{Many authors call $\psi_{E,c}$ the CGF of an exponential distribution~\citep{howard2020time,howard2021time,whitehouse2023time}, but we want to reserve the term sub-exponential for $\psi_N$ with finite $\lambda_\max$. Hence we use the term negative-exponential instead. } In this case we say that $(X_t)$ is $(\Sigma_t,c)$-sub-neg-exponential.  
\end{enumerate}
In general, to indicate that $(X_t)$ satisfies~\eqref{eq:sub-psi} for some $\psi$-function, we will say it is $(\Sigma_t)$-sub-$\psi$. The five $\psi$-functions are all (upper bounds on) the CGF of various random variables. Further, all are (a) twice differentiable and (b) obey $\psi(0) = \lim_{x\to0^+}\psi'(x) = 0$. We will thus call any function which satisfies (a) and (b) \emph{CGF-like}. 
\edit{In what follows we will mainly focus on CGF-like functions that are \emph{super-Gaussian}~\citep{whitehouse2023time}. Formally, we say $\psi$ is super-Gaussian if $\lambda\mapsto \psi(\lambda)/\lambda^2$ is nondecreasing. Informally, this implies that $\psi$ grows at least as quickly as the CGF of a standard normal random variable. All of the $\psi$-functions enumerated above are super-Gaussian.}

\edit{When restricting our attention to super-Gaussian CGF-like functions,~\eqref{eq:sub-psi} holds for all $v$ in the unit \emph{ball} $\dball$, as opposed to only the unit \emph{sphere} $\sd$---see Lemma~\ref{lem:sub-psi-over-ball}. This allows us to use uniform distributions over the ball in Proposition~\ref{prop:pac-bayes-supermart}. 
For $\psi$ functions that are not super-Gaussian, we can instead use the von Mises-Fisher distribution~\citep{mardia2000directional} over the sphere. This, however, results in bounds with slightly worse dimension dependence and worse constants. Since most $\psi$ functions of interest are super-Gaussian, the result for non-super-Gaussian $\psi$ is given in the Appendix; see Theorem~\ref{thm:sub-psi-vmf}.} 

\edit{
\begin{lemma}
    \label{lem:sub-psi-over-ball}
    Let $(X_t)$ be $(\Sigma_t)$-sub-$\psi$ where $\psi$ is super-Gaussian. Then~\eqref{eq:sub-psi} holds for all $v\in\dball$.  
\end{lemma}
\begin{proof}
Let $\psi$ be super-Gaussian. 
    Observe that for any $\theta\in\dball$, 
\[\frac{\psi(\lambda\|\theta\|)}{\lambda^2\|\theta\|^2} \leq \frac{\psi(\lambda)}{\lambda^2},\]
therefore $\psi(\lambda\|\theta\|) \leq \|\theta\|^2 \psi(\lambda)$. Consequently, letting $\varthetavec = \theta/ \|\theta\|\in\sd$, 
\begin{align*}
    &\quad \exp\{\lambda_i \la \theta, X_i-\mu\ra - \psi(\lambda_i) \la \theta, \Sigma_i \theta\ra\} \\ 
    & = \exp\{\lambda_i \|\theta\| \la \vartheta, X_i-\mu\ra - \psi(\lambda_i)\|\theta\|^2 \la \vartheta, \Sigma_i \vartheta\ra\} \\
    &\leq  \exp\{\lambda_i \|\theta\| \la \vartheta, X_i-\mu\ra - \psi(\lambda_i\|\theta\|) \la \vartheta, \Sigma_i \vartheta\ra\}.
\end{align*}
Since $\lambda_i \|\theta\|\leq \lambda_i \leq \lambda_{\max}$, it follows that $M_t(\theta)$ is upper bounded by 
\[\overline{M}_t(\vartheta) = \prod_{i\leq t}\exp\{\lambda_i \|\theta\| \la \vartheta, X_i-\mu\ra - \psi(\lambda_i\|\theta\|) \la \vartheta, \Sigma_i \vartheta\ra\},\]
which defines a nonnegative supermartingale by~\eqref{eq:sub-psi}, proving the claim. 
\end{proof}
}

\begin{remark}
    While~\eqref{eq:sub-psi} superficially resembles the sub-Gaussian condition~\eqref{eq:subG-condition} for $\psi=\psi_N$,~\eqref{eq:sub-psi} holds only for all vectors in the unit sphere. 
The natural isotropic condition for general $\psi$ is $\sup_{\bs{v}\in\sd}\E_P[\exp\{\lambda\la\bs{v},\Xvec_t - \muvec\ra|\F_{t-1}] \leq \exp\{\psi(\lambda)\}$. When $\lambda_{\max}=\infty$ then such a definition naturally extends to all vectors in $\Re^d$ and is equivalent to (the isotropic version of)~\eqref{eq:subG-condition}. For finite $\lambda_{\max}$, however, such an extension does not hold, so we work with \eqref{eq:sub-psi} instead.  This necessitates using different distributions in Proposition~\ref{prop:pac-bayes-supermart}. 
\end{remark}

\edit{Suppose $\psi$ is super-Gaussian.} Let $\rho_\vartheta$ be the uniform distribution \edit{over the ball} centered at $\vartheta \in (1-\eps) \sd$ with radius $\eps$. 
Let $\nu$ be the uniform distribution \edit{over the ball} centered at 0 with radius $1$. Observe that $\rho_\vartheta\ll \nu$. The KL-divergence between $\rho_\vartheta$ and $\nu$ is 
\edit{
\begin{align}
  \kl(\rho_\vartheta\|\nu) &= \int_{\eps\bd + \vartheta}\log\left(\frac{\d\rho_\vartheta}{\d\nu}(\theta)\right)\rho_\vartheta(\d\theta) \notag \\
  &= \int_{\eps\bd+\vartheta} \log\left(\frac{\vol(\bd)}{\vol(\eps\bd)}\right)\rho_\vartheta(\d\theta) = d\log\left(\frac{1}{\eps}\right),   \label{eq:kl-uniforms}
\end{align}
where $\vol(A)$ is the volume of a set $A\subset\Re^d$, whence $\vol(\bd) = \pi^{d/2}/\Gamma(d/2+1)$ and $\vol(\eps\bd) = \pi^{d/2}\eps^d/\Gamma(d/2+1)$. }
We note that using uniform distributions in conjunction with PAC-Bayes arguments was also used by \citet{lee2024unified} when developing confidence sequences for GLMs.  
Applying Proposition~\ref{prop:pac-bayes-supermart} to the family of processes $(M_t(\theta))_{t\geq 1}$ defined by 
\begin{equation}
\label{eq:sub-psi-nsm}
M_t(\thetavec) = \prod_{i=1}^t \exp\big\{\lambda_i \la \thetavec, \Xvec_i - \muvec\ra - \psi(\lambda_i)\la \thetavec, \Sigmavec_i\thetavec\ra\big\}, 
\end{equation}
each of which is a nonnegative $P$-supermartingale by \eqref{eq:sub-psi}, 
gives the following result. The proof may be found in Appendix~\ref{sec:proofs-light-tailed}. \edit{The closest result to the following bound is \citet[Corollary 23]{manole2023martingale}, which gives a bound for isotropic sub-$\psi$ processes. Their result is stated in terms of the inverse of the convex conjugate of $\psi$, $(\psi^*)^{-1}$, which makes a direct comparison of the bounds challenging. However, for common $\psi$ functions, $(\psi^*)^{-1}(x)$ behaves as $\sqrt{x}$ for small $x$, making their result consistent with Theorem~\ref{thm:lil-sub-gamma} below.}
\begin{theorem}
\label{thm:sub-psi-css}
    Let $(X_t)_{t\geq 1}$ be $\Sigma_t$-sub-$\psi$ and satisfy Assumption~\ref{assump:ccm}. \edit{Suppose $\psi$ is super-Gaussian}. Let $(\lambda_t)$ be a predictable sequence in $(0,\lambda_{\max})^{\mathbb{N}}$. Fix $0<\eps<1$. Then, with probability $1-\alpha$, for all $t\geq 1$, 
    \begin{equation}
    \label{eq:sub-psi-css}
    \norm{\frac{\sum_{i\leq t} \lambda_i X_i}{\sum_{i\leq t}\lambda_i } - \mu} \leq \frac{\sum_{i\leq t}\psi(\lambda_i) \|\Sigma_i\| + d\log(1/\eps) + \log(1/\alpha)}{ (1-\eps)\sum_i\lambda_i}. 
\end{equation}
\end{theorem}
A reasonable default value for $\eps$ is $1/2$, but it may be further optimized depending on the relative size of $d$, $\alpha$, and $\sup_t\|\Sigma_t\|$. The explicit dependence of \eqref{eq:sub-psi-css} on $d$ means that, unlike Theorem~\ref{thm:sub-gaussian-dim-free}, Theorem~\ref{thm:sub-psi-css} is not dimension-free. We are unaware of dimension-free concentration results for general sub-$\psi$ processes, or even for sub-gamma processes. The dependence on $d$ instead of $\sqrt{d}$ in the above CSS may seem worrying at first glance, but can be rectified by an appropriate choice of $\lambda_t$. Suppose that $\sup_t \|\Sigma_t\|\leq \|\Sigma\|$. For CGF-like $\psi$, choosing 
\begin{equation}
\label{eq:lambdat-sub-psi}
    \lambda_t \asymp \sqrt{\frac{d\log(1/\eps) + \log(1/\alpha)}{\|\Sigma\| t\log(t+1)}},
\end{equation}
gives a bound that shrinks as $\widetilde{O}(\sqrt{\|\Sigma\|(d + \log(1/\alpha))\log(t)/t})$ as shown by the following lemma. The proof is provided in Appendix~\ref{sec:proofs-light-tailed}. 

\begin{lemma}
\label{lem:sub-psi-width=}
Let $(X_t)_{t\geq 1}$ be $\Sigma_t$-sub-$\psi$ for any CGF-like $\psi$. Let $W_t$ denote the right hand side of~\eqref{eq:sub-psi-css} and suppose that $\sup_t \|\Sigma_t\|\leq \|\Sigma\|$.  Then, choosing $\lambda_t$ as in~\eqref{eq:lambdat-sub-psi} gives $W_t =\widetilde{O}(\sqrt{\|\Sigma\|(d + \log(1/\alpha)\log(t)/t})$. 
\end{lemma}

As in Sections~\ref{sec:sub-gaussian} and \ref{sec:log-concave}, we may use Theorem~\ref{thm:sub-psi-css} to obtain a bound which achieves optimal iterated-logarithm rates. The details may be found in Appendix~\ref{app:stitching}. We focus on sub-gamma random vectors (i.e., $\psi=\psi_{G,c}$). This is mostly without loss of generality, as \citet{howard2020time} demonstrate that any CGF-like $\psi$ obeys $\psi \leq a\psi_{G,c}$ for some $a,c>0$; see \citet[Proposition 1]{howard2020time}. 
\begin{theorem}
\label{thm:lil-sub-gamma}
Let $(X_t)_{t\geq 1}$ be $(\norm{\Sigmavec},c)$-sub-gamma for some $c\in\Re$ and satisfy 
Assumption~\ref{assump:ccm}. Then, for all $\alpha\in(0,1)$, with probability at least $1-\alpha$, simultaneously for all $t\geq 1$, 
    \begin{align}
     \bigg\|\frac{1}{t}\sum_{i\leq t} \Xvec_i-\muvec\bigg\| &\lesssim \sqrt{\frac{\|\Sigma\|(\log(1/\alpha) + \log\log(t) + d)}{t}}. 
\end{align}
\end{theorem}
Further, for sub-exponential random vectors (i.e., $\psi=\psi_N$ and $\lambda_{\max}<\infty$) we can obtain the following bound with explicit constants. For $\eps=1/5$, this result is directly comparably to (and tighter than) that of \cite{manole2023martingale} which, to our knowledge, was previously the tightest known result for sequential concentration of sub-exponential random vectors. Moreover, our result holds under martingale dependence and allows for anisotropy, which theirs does not. The details are again in Appendix~\ref{app:stitching}. 
\begin{theorem}
    \label{thm:lil-sub-exp}
    Let $(X_t)_{t\geq 1}$ be $(\norm{\Sigmavec},b)$-sub-exponential and satisfy 
Assumption~\ref{assump:ccm}. Fix $0<\eps<1$. Then, for all $\alpha\in(0,1)$, with probability at least $1-\alpha$, simultaneously for all $t\geq 2b \|\Sigma\|^{-1/2} \sqrt{d\log(1/\eps) + \log(1.65/\alpha) + \log(\log_2(t)+1)}$, 
    \begin{align}
     \bigg\|\frac{1}{t}\sum_{i\leq t} \Xvec_i-\muvec\bigg\| &\leq \frac{1.71}{1-\eps}\sqrt{\frac{\|\Sigma\|(\log(1.65/\alpha) + \log(\log_2(t)+1) + d\log(1/\eps))}{t}}. 
\end{align}
\end{theorem}
Let us compare Theorem~\ref{thm:lil-sub-exp} to Theorem~\ref{thm:lil-log-concave} for isotropic sub-exponential random vectors, i.e., $\Sigma = \sigma^2 I_d$. Let $r_\alpha(t) = \log(1/\alpha) + \log(\log(t))$. The numerator of the CSS in Theorem~\ref{thm:lil-sub-exp} scales as $\sigma\sqrt{r_\alpha(t) + d}$ and that in Theorem~\ref{thm:lil-log-concave} scales as $\sigma\sqrt{d\sqrt{r_\alpha(t)} + r_\alpha(t) \sqrt{d}}$, a worse rate. Of course, Theorem~\ref{thm:lil-log-concave} holds for more general distributions. Fixed-time optimization of Theorem~\ref{thm:lil-sub-exp} can be found in Appendix~\ref{sec:additional_sub_psi}.

\subsection{Time-varying means under sub-Gaussianity}
\label{sec:time-varying-means-light}
Here we drop Assumption~\ref{assump:ccm} and instead allow $\muvec_t \equiv \E[\Xvec_t |\F_{t-1}]$ to change with time. 
We are interested in understanding the behavior of $\norm{\sum_{i\leq t} (\Xvec_i - \muvec_i)}$. 
Our bounds thus far have resulted from applying Proposition~\ref{prop:pac-bayes-supermart} to families of supermartingales involving predictable free parameters $\lambda_t$ that we may optimize as a function of $t$. This approach does not work when the means are changing, roughly because it doesn't allow us to isolate $\sum_{i\leq t}(\Xvec_i-\muvec_i)$. 
Instead, we use an approach known as the ``method of mixtures,'' which involves defining a supermartingale $(M_t(\thetavec, \lambda)_t$ for each $\lambda$ in some set $\Lambda\subset\Re$, and then integrating over a well-chosen distribution $\pi$: $M_t(\thetavec) := \int_{\lambda\in\Lambda} M_t(\thetavec, \lambda)\pi(\d\lambda)$. Fubini's theorem implies that the resulting process $(M_t(\thetavec))_t$ is again a nonnegative supermartingale. 

The method of mixtures is one of the best known techniques for producing time-uniform bounds, dating back at least to \citet{darling1968some}.
In particular, using a two-sided Gaussian mixture, which we do here, was pioneered by \citet{robbins1970statistical}. 
It has also been used in modern work (cf.\ \citealp{kaufmann2021mixture,howard2021time,waudby2021time}). 
Of course, not all mixtures will result in a tractable process $(M_t(\thetavec))_t$ (analytically or computationally). This is doubly true in our case, because our arguments rely on isolating the term $\la \varthetavec, \Xvec_i - \muvec\ra$. Therefore, even implicit bounds that might be approximated computationally in the scalar setting do not serve us well.  
(See for instance \citet{howard2021time} who study using one- and two-sided Gaussian mixtures, Gamma mixtures, one and two-sided beta binomial mixtures, and others.) 

When $(\Xvec_t)$ are conditionally sub-Gaussian, we can obtain a closed-form bound on $\norm{\sum_{i\leq t} (\Xvec_i - \muvec_i)}$ using a two-sided Gaussian mixture distribution as $\pi$.  
For technical reasons elucidated in the proof, we can only consider random vectors that are $\sigma_t$-sub-Gaussian (meaning that \eqref{eq:subG-condition} holds with $\Sigmavec_t = \sigma_t^2 \bs{I}_d$). This leads to a bound with width depending on $\sqrt{d}$ instead of $\Sigmavec_t$.

\begin{theorem}
\label{thm:time-varying-subG}
Let $(\Xvec_t)_{t\geq 1}$ be conditionally $\sigma_t$-sub-Gaussian where $X_t$ has conditional mean $\muvec_t$. 
Set $H_t = \sum_{i\leq t}\sigma_i^2$. Then, for any $a>0$, $\eps\in(0,1)$, and $\alpha\in(0,1)$, we have that with probability $1-\alpha$, simultaneously for all $t\geq 1$,
\begin{equation}
\bigg\|\frac{1}{t}\sum_{i\leq t} (\Xvec_i-\muvec_i)\bigg\| \leq \frac{1}{1-\eps}\sqrt{\frac{2(1 + a^2H_t)}{a^2t^2} \left(d\log(1/\eps) + \log(\sqrt{1 + a^2H_t}/\alpha)\right)}.
\end{equation}    
\end{theorem}
The proof is provided in Appendix~\ref{sec:proofs-light-tailed}. Like Theorem~\ref{thm:sub-psi-css}, we must rely on uniform distributions in the PAC-Bayes argument (note the prior and posterior used in Proposition~\ref{prop:pac-bayes-supermart} is distinct from the mixing distribution used in the method of mixtures), which accounts the dimension-dependence of the bound. One should compare our bound to Robbins' original mixture (see \citealp[Equation (17)]{robbins1970statistical}, or \citealp[Equation (8)]{waudby2021time} for a version closer in spirit and notation to our work here).

\subsection{Obtaining confidence ellipsoids}
\label{sec:ellipsoids}
Let us end this section by briefly discussing how the techniques developed thus far can be used to obtain bounds in the Mahalanobis norm instead of the $\ell_2$ norm. We use sub-Gaussian random vectors as  our primary example. If $(\Xvec_t)_{t\geq 1}$ are $\Sigmavec$-sub-Gaussian, then the random vectors $\Sigmavec^{-1/2}\Xvec_t$ are 1-sub-Gaussian. That is, $\E[\exp\{\lambda \la \thetavec, \Sigmavec^{-1/2}(\Xvec_t-\muvec)\ra - \psi_N(\lambda)\la \theta, \theta\ra|\F_{t-1}]\leq 1$ for all $\lambda\in\Re$. 
Theorem~\ref{thm:lil-subG} thus implies the following CSS: with probability $1-\alpha$, simultaneously for all $t\geq 1$, 
\begin{align}
\norm{\frac{1}{t}\sum_{i\leq t}\Xvec_i-\muvec}_{\Sigmavec^{-1}} &=
    \norm{\frac{1}{t}\sum_{i\leq t}\Sigmavec^{-1/2}\Xvec_i-\Sigmavec^{-1/2}\muvec} \notag \\
    &\leq 1.21\sqrt{\frac{d}{t}} + 1.682\sqrt{\frac{\log(1.65/\alpha) + 2\log(\log_2(t)+1)}{t}}. \label{eq:conf_ellipsoid_sg}
\end{align}
Confidence ellipsoids can be desirable because their shape reflects the variance of the distribution. However, the drawback is that in order to be explicitly computed, the variance $\Sigmavec$ must be both known and fixed. We note that the bound above is distinct in flavor from the self-normalized bounds studied by \citet{abbasi2011improved}, \citet{de1999general}, and \citet{whitehouse2023time}. Such bounds aim to achieve self-normalization with respect to an accumulated and possibly random variance process as opposed to a fixed covariance matrix. Bounds similar to \eqref{eq:conf_ellipsoid_sg} can be obtained for other $\Sigmavec$-sub-$\psi$ conditions, following the techniques in Section~\ref{sec:sub-psi}.

\section{Heavy-tailed random vectors}
\label{sec:heavy-tailed}
In this section we suppose the random vectors $(\Xvec_t)$ are less well-behaved. We assume only that there exists a finite second moment: 
\begin{equation}
\label{eq:p-moment}
\E_P[\norm{\Xvec}^2|\F_{t-1}] \leq v^2<\infty, \quad \forall t\geq 1. 
\end{equation}
Note that this condition implies that the norm of the mean $\muvec$ is bounded by $v$: $\norm{\E[\Xvec|\F_{t-1}]} \leq \E[\norm{\Xvec}|\F_{t-1}]\leq \E[\norm{\Xvec}^2|\F_{t-1}]^{1/2} \leq v$. Ideally, one would prefer that \eqref{eq:p-moment} gets replaced by a bound on the centered moment $\E_P[\norm{\Xvec - \muvec}^2 |\F_{t-1}]$. Unfortunately, our techniques in this section do not  allow for such a constraint so, like \citet{catoni2018dimension} we satisfy ourselves with~\eqref{eq:p-moment}.  

In this section we present three main results: two semi-empirical CSSs (Theorems~\ref{thm:second_moment_dim_free} and \ref{thm:second_moment_symmetric}) and a sequential version of the Catoni-Giulini estimator (Theorem~\ref{thm:catoni-estimator}). All of these CSSs are dimension-free.
Throughout this section, we let $\rho_{\varthetavec}$ be a Gaussian with mean $\varthetavec$ and variance $\beta^{-1}\bs{I}_d$. Set $\Sigmavec_t = \Cov(\Xvec_t|\F_{t-1})$.

\subsection{A first semi-empirical CSS}

Our first result is based on the following supermartingale derived using an inequality furnished by \citet[Proposition 12]{delyon2009exponential}; see also \citet[Lemma 3]{howard2020time}. 
Variants of the 
  resulting process have been studied by \citet[Lemma 5]{wang2023catoni} in the context of CSs, and both \citet[Lemma 1.3]{haddouche2023pac} and \citet[Corollary 17]{chugg2023unified} in the context of PAC-Bayesian bounds.

\begin{lemma}
\label{lem:delyon_nsm}
Suppose $(\Xvec_t)_{t\geq 1}\sim P$ where $P$ is any distribution obeying \eqref{eq:p-moment} and Assumption~\ref{assump:ccm}.  
Let $X_i(\thetavec) = \la \thetavec, \Xvec_i-\muvec\ra$.
For each fixed $\thetavec\in\Re^d$, 
the process $S(\thetavec) = (S_t(\thetavec))_{t\geq 1}$ is a nonnegative $P$-supermartingale, where $
S_t(\thetavec) = \prod_{i\leq t} \exp\big\{\lambda_i X_i(\thetavec) - \frac{\lambda_i^2}{6}[ X_i^2(\thetavec)+ 2\la \thetavec, \Sigmavec_i \thetavec\ra\big\}$.
\end{lemma}

Mixing over $\Theta=\rd$ with prior $\rho_{\bs{0}}$ and posteriors $\rho_{\varthetavec}$, $\varthetavec\in\sd$, we obtain the following theorem, which is proved in Appendix~\ref{sec:proofs-heavy-tailed} (along with Lemma~\ref{lem:delyon_nsm}).

\begin{theorem}
\label{thm:second_moment_dim_free}
 Let $(\Xvec_t)_{t\geq 1}\sim P$ for any $P$ obeying \eqref{eq:p-moment} and Assumption~\ref{assump:ccm}. Let $(\lambda_t)$ be a predictable 
sequence in $(0,\infty)$. For any $\beta>0$, vector $\bs{x}$, and matrix $\bs{A}$, set 
\[s_\beta(\bs{x}) = \left(1 + \frac{1}{\beta}\right)(\norm{\bs{x}} + v)^2,\text{~ and ~} w_\beta(\bs{A}) = \norm{\bs{A}} + \frac{\Tr(\bs{A})}{\beta}.\]
$s_\beta(\bs{x}) = (1 + \frac{1}{\beta})(\norm{\bs{x}} + v)^2$ and $w_\beta(\bs{A}) = \norm{\bs{A}} + \nicefrac{\Tr(\bs{A})}{\beta}$. 
Then, for any $\alpha\in(0,1)$, with probability $1-\alpha$, for all $t\geq 1$, 
\begin{equation*}
    \norm{\frac{\sum_{i\leq t} \lambda_i \bs{X}_i}{\sum_{i\leq t}\lambda_i} - \muvec} \leq \frac{\sum_{i\leq t}\lambda_i^2\left\{s_\beta(\Xvec_i)  + 2w_\beta(\Sigmavec_i)\right\}}{6\sum_{i\leq t}\lambda_i} + \frac{\beta/2 + \log(1/\alpha)}{\sum_{i\leq t}\lambda_i}.
    \end{equation*}   
\end{theorem}
Theorem~\ref{thm:second_moment_dim_free} is \emph{semi}-empirical in the sense that it depends both on the values $\norm{\Xvec_t}$ but also on $\norm{\Sigmavec_t}$ and $\Tr(\Sigmavec_t)$. To explicitly compute the bound, therefore, would require knowledge of these values. Still, a known bound on the variance is required for nonasymptotic concentration of unbounded random variables (even in the scalar case), so fully empirical bounds are impossible. 

We suggest setting $\beta = \sqrt{\sup_t \Tr(\Sigmavec_t)}$ (which is finite because $v$ is finite)  
and $\lambda_t = \sqrt{\frac{\log(1/\alpha)}{v^2 t\log t}}$ so that the \emph{expected} width of the CSS in Theorem~\ref{thm:second_moment_dim_free} scales as $\widetilde{O}(v\sqrt{\log(t)\log(1/\alpha)/t})$. (Here we've used that $w_\beta(\Sigmavec_t) = O(\E[s_\beta(\Xvec_t)|\F_{t-1}]) = O(v^2)$). Note that this bound on the width only holds in expectation and this appears to be unavoidable: in general, with exactly two moments and no more, the empirical variance term may not concentrate.

\paragraph{Scalar setting.}
It is perhaps worth noting that in the univariate setting, Theorem~\ref{thm:second_moment_dim_free} complements Lemma~6 in \citet{wang2023catoni}. They develop a confidence sequence for scalar-valued, heavy-tailed random variables by using a process similar to that described in Lemma~\ref{lem:delyon_nsm}. 
Instead of upper bounding $\mu$ by the raw second moment, however, they solve a quadratic equation to obtain an \emph{anti}-confidence sequence for $\mu$, and their final CS results from taking the complement. Theorem~\ref{thm:second_moment_dim_free}, meanwhile, results in a slightly looser but more digestible CS. 
\begin{corollary}
\label{cor:second_moment_scalar}
Let $X_1,X_2,\dots$ be scalar random variables with second moment $v^2$ and conditional mean $\mu=\E[X_t|\F_{t-1}]$. Let $(\lambda_t)$ be a predictable 
sequence in $(0,\infty)$. Then, for any $\alpha\in(0,1)$, with probability $1-\alpha$, for all $t\geq 1$, 
\begin{equation*}
    \left|\frac{\sum_{i\leq t} \lambda_i X_i}{\sum_{i\leq t}\lambda_i} - \mu\right| \leq \frac{\sum_{i\leq t}\lambda_i^2[(\norm{\Xvec_i} + v)^2 + 2v^2]}{3\sum_{i\leq t}\lambda_i} + \frac{1/2 + \log(1/\alpha)}{\sum_{i\leq t}\lambda_i}.
    \end{equation*}
\end{corollary}

We note that despite Corollary~\ref{cor:second_moment_scalar} being closed-form, it may perform worse than the bound of \citet{wang2023catoni} in practice. First, as noted, to achieve a closed-form bound we deploy some inequalities which loosens the bound. Second, the right hand side may not concentrate. Indeed, we should not expect the sample mean to have sub-Gaussian tail behavior around the true mean.

Considering the fixed-time setting with $n$ observations and setting $\lambda_i = \lambda := \sqrt{\frac{3(\log(1/\alpha) + 1/2)}{2nv^2}}$, Corollary~\ref{cor:second_moment_scalar} gives the following CI for heavy-tailed random variables. 

\begin{corollary}
\label{cor:second_moment_scalar_fixed}
Let $X_1,\dots,X_n$ be scalar random variables with second moment $v^2$ and conditional mean $\mu = \E[X_t|\F_{t-1}]$. Then, for any $\alpha\in(0,1)$, with probability $1-\alpha$, 
\begin{equation*}
    \left|\frac{1}{n}\sum_{i\leq n} X_i - \mu\right| \lesssim \sqrt{\frac{\log(1/\alpha)}{n}}\left(
    \frac{\sum_{i\leq n}(\norm{\Xvec_i} + v)^2}{nv}\right).
\end{equation*}
\end{corollary}

\subsection{A semi-empirical CSS under symmetry}
\label{sec:symmetry}

Let us present a second semi-empirical bound for heavy-tailed random vectors. We assume that the vectors $(\Xvec_t)_{t\geq 1}$ are conditionally symmetric around the conditional mean $\muvec$, i.e., $\Xvec_t - \muvec \sim -(\Xvec_t -\muvec) | \F_{t-1}$, and also have two moments. Using a supermartingale identified in \citet[Lemma 6.1]{de1999general}, we may obtain a result reminiscent of Theorem~\ref{thm:second_moment_dim_free} but without the term $w_\beta(\Sigmavec_t)$ and with tighter constants.  Theorem~\ref{thm:second_moment_symmetric} is proved in Appendix~\ref{sec:proofs-heavy-tailed}.

\begin{theorem}
    \label{thm:second_moment_symmetric}
Let $(\Xvec_t)_{t\geq 1}\sim P$ for any $P$ obeying the moment bound \eqref{eq:p-moment} and Assumption~\ref{assump:ccm}. Suppose the vectors are also conditionally symmetric: $\Xvec_t - \muvec \sim -(\Xvec_t -\muvec) | \F_{t-1}$ for all $t\geq 1$. 
For any positive predictable 
sequence $(\lambda_t)$ and any $\alpha\in(0,1)$, with probability $1-\alpha$, for all $t\geq 1$, 
\begin{equation*}
    \norm{\frac{\sum_{i\leq t} \lambda_i \bs{X}_i}{\sum_{i\leq t}\lambda_i} - \muvec} \leq \frac{\sum_{i\leq t}\lambda_i^2(\norm{\Xvec_i} + v)^2 + 1/2 + \log(1/\alpha)}{\sum_{i\leq t}\lambda_i}.
    \end{equation*}
\end{theorem}

Of course, one may obtain results analogous to Corollaries~\ref{cor:second_moment_scalar} and \ref{cor:second_moment_scalar_fixed} 
for Theorem~\ref{thm:second_moment_symmetric}. We omit these for brevity. A similar discussion as above on how set $\lambda_t$ applies here.

\subsection{A sequential Catoni-Giulini estimator}
\label{sec:catoni}
Now we proceed to a sequentially-valid version of the estimator originally proposed by \citet{catoni2018dimension}. To motivate it, observe that the theorems given thus far rely on the estimators $\widehat{\muvec}_t = \sum_i \lambda_i \Xvec_i / \sum_{i} \lambda_i$.
However, it is known that such simple linear combinations of samples such as the (possibly weighted) empirical average have sub-optimal finite-sample performance under heavy tails~\citep{catoni2012challenging}. 
Other estimators are therefore required. To begin, for a sequence of positive scalars $(\lambda_t)$, define the threshold function
\begin{equation}
     \thres_t(\bs{x}) := \frac{(\lambda_t \norm{\bs{x}}) \wedge 1}{\lambda_t\norm{\bs{x}}}\bs{x}. 
\end{equation}
Our estimator will be 
\begin{equation}
  \widehat{\muvec}_t = \frac{\sum_{i\leq t}\lambda_i \thres_i(\Xvec_i)}{\sum_{i\leq t} \lambda_i}.  
\end{equation}
The analysis proceeds by separating the quantity $\la \varthetavec, \thres_t(\Xvec_t) - \muvec\ra$ into two terms: 
\begin{equation}
\label{eq:thres_decomp}
  \la \varthetavec, \thres_t(\Xvec_t) - \muvec\ra = \underbrace{\la \varthetavec, \thres_t(\Xvec_t)  - \muvec_t^\thres\ra}_{(i)} + \underbrace{\la \varthetavec, \muvec_t^\thres - \muvec\ra}_{(ii)}, 
\end{equation}
where $\muvec_t^\thres = \E[\thres_t(\Xvec)|\F_{t-1}]$. 
The second term is dealt with by the following technical lemma, which is proved in Appendix~\ref{sec:proofs-heavy-tailed}. 
Intuitively, the result follows from the fact that $\thres_t(x)$ shrinks $x$ towards the origin by an amount proportional to $\lambda_t$.  
\begin{figure}
\centering
\includegraphics[scale=0.45]{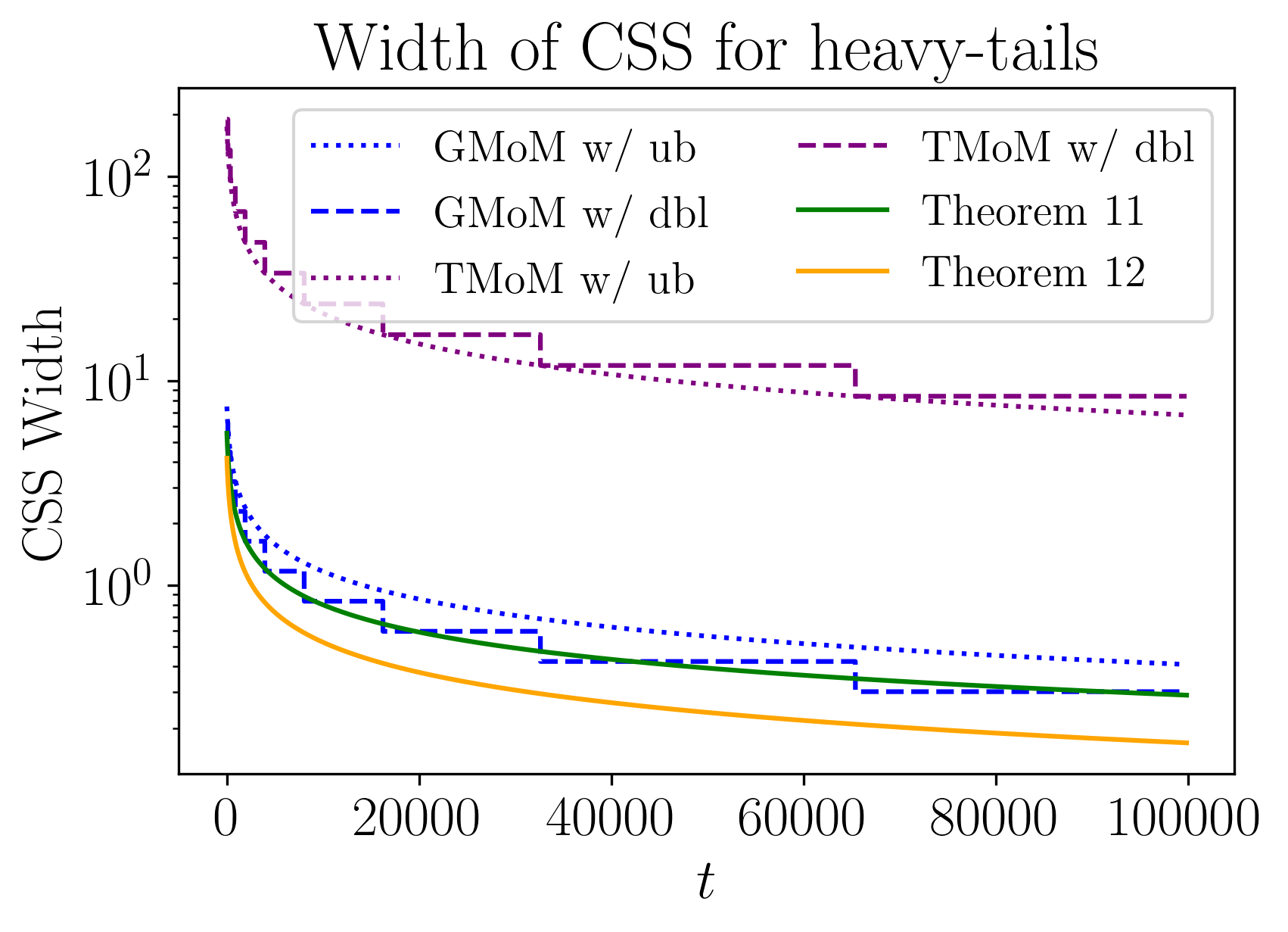}
\includegraphics[scale=0.45]{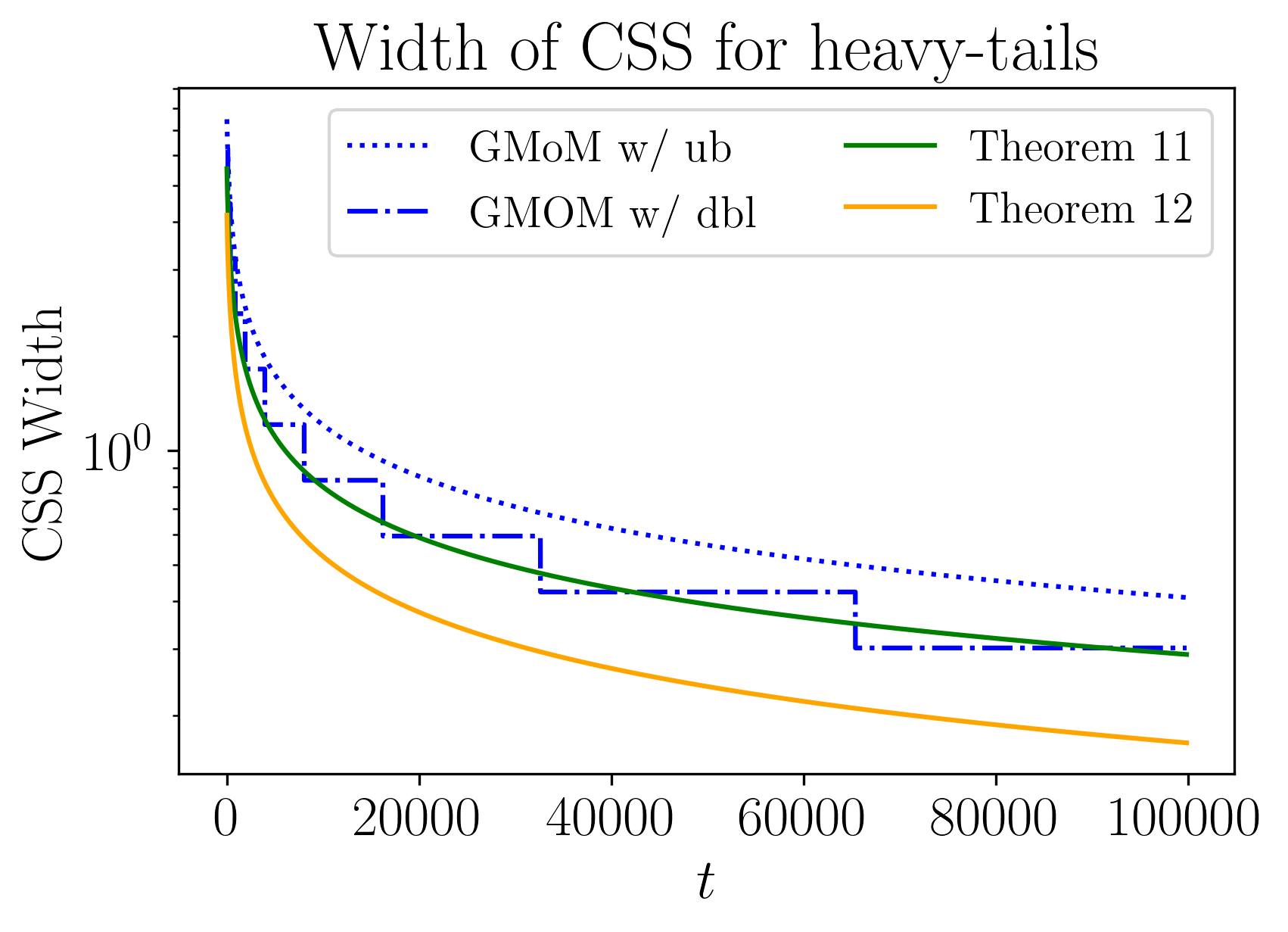}    
    \caption{{\bf Left: } Comparison between our sequential Catoni-Giulini estimator (Theorem~\ref{thm:catoni-estimator}), geometric median-of-means (GMoM)~\citep{minsker2015geometric}, and tournament median-of-means (TMoM)~\citep{lugosi2019mean}. We make the MoM estimators time-uniform in two ways: with a naive union bound (solid lines), and via the doubling method of \citet{duchi2024information} (DH Doubling). Even though it has optimal (fixed-time) rates, the TMoM estimator suffers because of large constants. 
    {\bf Right:} A closer look at the performance of GMoM estimator compared to Theorem~\ref{thm:catoni-estimator}. 
    We assume that a practitioner knows either $\Tr(\Sigma)$ or $v^2$ (knowing both would imply knowledge of $\|\mu\|$), hence we set $\Tr(\Sigma) = v^2$ in the figures in order to compare their multipliers in the bound. 
    Again, we make the GMoM time-uniform via (i) a union bound, and (ii) Duchi-Haque doubling. 
    Simulation details can be found in Appendix~\ref{sec:experiments}.}
    \label{fig:cat-vs-mom}
\end{figure}
\begin{lemma}
\label{lem:thres-mu}
Term (ii) of \eqref{eq:thres_decomp} obeys: $
    \sup_{\varthetavec\in\sd} \la \varthetavec, \muvec_t^\thres- \muvec\ra = \norm{\muvec_t^\thres-\muvec} \leq \lambda_tv^{2}$. 
\end{lemma}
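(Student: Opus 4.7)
The equality $\sup_{\varthetavec\in\sd} \la \varthetavec, \muvec_t - \muvec\ra = \norm{\muvec_t-\muvec}$ is immediate from Cauchy--Schwarz (attained at $\varthetavec = (\muvec_t-\muvec)/\norm{\muvec_t-\muvec}$), so the only real content is the inequality $\norm{\muvec_t-\muvec}\le\lambda_t v^{2/p}$. My plan is to do this in one short chain that reduces to pointwise control of the threshold error followed by a single moment-interpolation step.

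First, I would invoke Assumption~\ref{assump:ccm} to rewrite $\muvec = \E[\Xvec_t\mid\F_{t-1}]$, so that
\[
\muvec_t - \muvec = \E[\thres_t(\Xvec_t) - \Xvec_t \mid \F_{t-1}].
\]
Applying Jensen's inequality to the convex function $\norm{\cdot}$ then yields $\norm{\muvec_t-\muvec}\le \E[\norm{\thres_t(\Xvec_t)-\Xvec_t}\mid\F_{t-1}]$, and the problem reduces to bounding $\norm{\thres_t(\Xvec_t)-\Xvec_t}$ pointwise.

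Next, from the definition of $\thres_t$, on the event $\{\lambda_t\norm{\Xvec_t}\le 1\}$ we have $\thres_t(\Xvec_t)=\Xvec_t$, so the difference vanishes, while on $\{\lambda_t\norm{\Xvec_t}>1\}$ we have $\thres_t(\Xvec_t) = \Xvec_t/(\lambda_t\norm{\Xvec_t})$ so that a direct computation gives $\norm{\thres_t(\Xvec_t)-\Xvec_t}=\norm{\Xvec_t}-1/\lambda_t \le \norm{\Xvec_t}$. Combining the two cases,
\[
\norm{\thres_t(\Xvec_t)-\Xvec_t} \le \norm{\Xvec_t}\,\ind\{\lambda_t\norm{\Xvec_t}>1\} \le \lambda_t\norm{\Xvec_t}^2,
\]
where the second inequality uses the crude bound $\ind\{\lambda_t\norm{\Xvec_t}>1\}\le \lambda_t\norm{\Xvec_t}$.

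Finally, I would take conditional expectations and apply the moment interpolation $\E[\norm{\Xvec_t}^2\mid\F_{t-1}] \le \E[\norm{\Xvec_t}^p\mid\F_{t-1}]^{2/p} \le v^{2/p}$, which is Jensen for $x\mapsto x^{p/2}$ and requires precisely $p\ge 2$. The main (and only) place one has to think is recognizing that the right trade-off is to use $\ind\{\lambda_t\norm{\Xvec_t}>1\} \le \lambda_t\norm{\Xvec_t}$ rather than the seemingly natural $\ind\{\lambda_t\norm{\Xvec_t}>1\}\le (\lambda_t\norm{\Xvec_t})^{p-1}$ which would give the strictly worse bound $\lambda_t^{p-1}v$; pairing the gentle indicator bound with the $p$-th moment assumption via Jensen is what produces the claimed $\lambda_t v^{2/p}$.
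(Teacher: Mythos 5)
Your proof is correct and follows essentially the same route as the paper's: both arguments reduce to the pointwise bound $\norm{\thres_t(\xvec)-\xvec} = \bigl(1 - \tfrac{\lambda_t\norm{\xvec}\wedge 1}{\lambda_t\norm{\xvec}}\bigr)\norm{\xvec} \le \lambda_t\norm{\xvec}^2$ followed by the Lyapunov/Jensen step $\E[\norm{\Xvec}^2\mid\F_{t-1}]\le v^{2/p}$. The only cosmetic difference is that you pass through Jensen's inequality for the norm, whereas the paper works directionally and applies Cauchy--Schwarz to $\la\varthetavec,\Xvec\ra$.
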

To handle term (i) of \eqref{eq:thres_decomp}, we appeal to PAC-Bayesian techniques. 
Notice that the process $(U(\thetavec))_{t\geq 1}$ defined by $U_t(\thetavec) = \prod_{i\leq t}\exp\{\lambda_i f_i(\thetavec) - \log\E\exp(\lambda_i f_i(\thetavec))\}$ where $f_i(\thetavec) = \la \thetavec, \thres_i(\Xvec_i)- \muvec_i^\thres\ra$ is a nonnegative $P$-martingale for any $\thetavec\in\Re^d$. Applying Proposition~\ref{prop:pac-bayes-supermart} with the prior $\rho_{\bs{0}}$ and posteriors $\rho_{\varthetavec}$, $\varthetavec\in\sd$ provides a bound on $\sup_{\varthetavec\in\sd}\sum_{i\leq t} \lambda_i\la\varthetavec, \thres_i(\Xvec_i) - \muvec_i^\thres \ra$, which in turn furnishes the following theorem. The details are in Appendix~\ref{sec:proofs-heavy-tailed}.  
\begin{theorem}
\label{thm:catoni-estimator}
Let $(X_t)_{t\geq 1}$ obey $\E[\norm{X_t}^2|\F_{t-1}]\leq v^2$ and satisfy Assumption~\ref{assump:ccm}. Fix $\beta>0$.  
Then, for any $\alpha\in(0,1)$, with probability at least $1-\alpha$, for all $t \geq 1$,
\begin{align}
    &\left\| \frac{\sum_{i\leq t} \lambda_i \thres_i(\Xvec_i)}{\sum_{i\leq t}^t \lambda_i} - \muvec   \right\| 
    \leq  \frac{v^{2}\left(2e^{\frac{2}{\beta} + 2}+1\right)\sum_{i\leq t}\lambda_i^2 + \beta/2 + \log(1/\alpha)}{\sum_{i\leq t}\lambda_i}. 
    \label{eq:catoni-bound}
\end{align}
\end{theorem}
We find $\beta=4$ to be a reasonable choice in practice. Consider setting $\lambda_t \asymp \sqrt{\nicefrac{\log(1/\alpha)}{v^{2}t\log t}}$. 
As discussed in Section~\ref{sec:light-tailed}, $\sum_{i\leq t} \nicefrac{1}{i\log i}\asymp \log\log(t)$ and $\sum_{i\leq t} \nicefrac{1}{\sqrt{i\log i}}\asymp\sqrt{t/\log(t)}$, 
so the width of \eqref{eq:catoni-bound} scales as 
$\widetilde{O}
(\sqrt{\nicefrac{v^{2}\log(1/\alpha)\log t}{t}}).$
In the fixed time setting, ideal estimators are sub-Gaussian \citep[Equation (3.1)]{lugosi2019mean}, in the sense that they have rate 
$O(\sqrt{\nicefrac{\norm{\Sigmavec}\log(1/\alpha)}{n}} + \sqrt{\nicefrac{\Tr(\Sigmavec)}{n}}) 
$ 
(where we assume that $\Sigma$ is constant over time). 
For us, setting $\lambda=\lambda_i \propto  \sqrt{\nicefrac{\log(1/\alpha)}{v^{2} n}}$ for all $i$ yields a slightly larger rate of $O( \sqrt{v^{2}\log(1/\alpha)/n})$. This is no surprise: it is well-known that the Catoni-Giulini estimator is not sub-Gaussian, hence the development of the tournament median-of-means (TMoM) estimator~\citep{lugosi2019sub}. 

We also provide a stitched version of Theorem~\ref{thm:catoni-estimator} which achieves iterated logarithm rates. The details are in Appendix~\ref{sec:stitched-cg}.  
\begin{theorem}
    \label{thm:lil-cg}
    Let $(X_t)_{t\geq 1}$ obey $\E[\norm{X_t}^2|\F_{t-1}]\leq v^2$ and satisfy Assumption~\ref{assump:ccm}. Let $(\beta_m)_{m\geq 1}$ be a sequence of positive scalars such that $\beta_m$ is $\F_{\lfloor \log_2(m)\rfloor}$-predictable. Then, with probability $1-\alpha$, simultaneously for all $t\geq 1$, 
    \begin{equation}
    \left\| \frac{\sum_{i\leq t} \lambda_i \thres_i(\Xvec_i)}{\sum_{i\leq t}^t \lambda_i} - \muvec   \right\| 
    \leq 1.69 v\sqrt{\frac{A_{\lfloor \log_2(t)\rfloor}(\beta_{\lfloor \log_2(t)\rfloor} /2 + \log(1.65/\alpha) + \log(\log_2(t)+1))}{t}},
    \end{equation}
    where $A_m = 2\exp(2/\beta_m + 2) + 1$. 
\end{theorem}

Here, $(\beta_m)$ is a sequence of optimizable constants, though only at geometrically spaced time steps, not every time step. Root-finding may be used to find the optimal value of $\beta_m$ at some value of $t$, though we find that in practice this does not improve the bound much over setting $\beta_m = 4$ for all $m$.

We compare various estimators in Figure~\ref{fig:cat-vs-mom}. The TMoM estimator suffers as a result of large constants, washing out its asymptotic benefits for reasonable sample sizes. Similarly to the experiments in Section~\ref{sec:sub-gaussian}, the TMoM estimator was made time-uniform in two ways: via a naive union bound, and using the doubling technique of \citet{duchi2024information}. Theorem~\ref{thm:catoni-estimator} outperforms both. We also compare Theorem~\ref{thm:catoni-estimator} to the \emph{geometric} median-of-means (GMoM) estimator~\citep{minsker2015geometric} which achieves a rate of $O(\sqrt{\Tr(\Sigma)\log(1/\alpha)}{n})$ in the fixed-time setting. We make it time-uniform in the same two ways as TMoM. GMoM has significantly smaller constants than the TMoM estimator, making it much closer to Theorem~\ref{thm:catoni-estimator} in practice, and sometimes beating it. The stitched Catoni-Giulini estimator, Theorem~\ref{thm:lil-cg}, dominates other estimators.

\section{Summary}
We have provided a general framework to derive nonparametric, time-uniform confidence sequences for the mean of a multivariate distribution under martingale dependence using PAC-Bayesian techniques. Our results in light-tailed regimes include dimension-free bounds for sub-Gaussian and log-concave random vectors, bounds for general sub-$\psi$ distributions, and a bound for tracking the time-varying mean of sub-Gaussian distributions. Our results in the heavy-tailed regime include two semi-empirical bounds and a sequentially-valid version of the Catoni-Giulini estimator. We give bounds that achieve optimal iterated-logarithm rates, and also bounds that are optimized for particular sample sizes.

\ifthenelse{\boolean{arxiv}}{
\paragraph{Acknowledgements.} 
BC thanks Ian Waudby-Smith and Diego Martinez-Taboada for several useful discussions, and Alessandro Rinaldo for providing a helpful reference. 
BC and AR acknowledge support from NSF grants IIS-2229881 and DMS-2310718. 
BC was supported in part by the NSERC PGS D program, grant no. 567944.
}{}

\bibliography{main}

\newpage
\newpage

\appendix 
\edit{
\section{Additional results}
\label{sec:additional_results}

\subsection{Sub-Gaussian distributions}
\label{sec:additional_sub-g}

\paragraph{Isotropic case.}
Let us instantiate Theorem~\ref{thm:sub-gaussian-dim-free} in the isotropic case, meaning that $\Sigmavec_t = \sigma_t^2\bs{I}_d$ for some scalar $\sigma_t$. Then $\norm{\Sigmavec_t} = \sigma_t^2$ and $\Tr(\Sigmavec_t) = \sigma_t^2d$. Taking $\beta$ and $\lambda_t$ as in~\eqref{eq:subG-parameters}, i.e., 
\begin{equation*}
    \beta = \sqrt{2d \log(1/\alpha)} \text{~ and ~} \lambda_t = \sqrt{\frac{\beta + 2\log(1/\alpha)}{\sigma^2(1 + d/\beta)t\log(t+1)}},
\end{equation*}
where $\sigma^2\geq \sup_t\sigma_t^2$ gives a width of 
\begin{equation}
    \widetilde{O}\left(\sqrt{\sigma^2(d + \log(1/\alpha)}\sqrt{\frac{\log t}{t}}\right).
\end{equation}
We note that the factor of $\sqrt{d}$ is very natural in the isotropic case as, intuitively, the variance is spread evenly in all directions and thus scales with the dimension.  See, e.g., \citet[Theorem 3.1.1]{vershynin2018high} or \citet[Theorem 1.19]{rigollet2023high} for the same $\sqrt{d}$ dependence. 

\subsection{Log-concave distributions}

\paragraph{Fixed-time optimization.}
Following the approach in Section~\ref{sec:sub-gaussian}, let us consider optimizing our log-concave bound for a fixed time $t=n$. 
Consider taking 
\[\lambda_1=\dots=\lambda_n= \sqrt{\frac{2 h_\Sigma(\log(2/\alpha))}{n h_\Sigma(1)}},\]
if $n$ is large enough such that $\lambda_1$ is upper bounded by 1. Applying Theorem~\ref{thm:log-concave} and using the upper bound on $h_\Sigma(r)h_\Sigma(1)$ gives the following result. Again, the bound is tightest at time $t=n$ but remains valid at all other times. We are unaware of previous fixed-time concentration results for log-concave distributions against which we can compare Corollary~\ref{cor:log-concave-fixed-time}. 
\begin{corollary}
    \label{cor:log-concave-fixed-time}
    Let $(X_t)_{t\geq 1}$ satisfy~\eqref{eq:concave-orlicz} and Assumption~\ref{assump:ccm}. Fix any $n$ and $\alpha\in(0,1)$ such that $n \geq \sqrt{2h_\Sigma(r_\alpha)/h_\Sigma(1)}$ where $r_\alpha = \log(2/\alpha)$. Then, with probability $1-\alpha$, simultaneously for all $t\geq 1$, 
    \begin{equation}
        \norm{\frac{\sum_{i\leq t}\lambda_i \Xvec_i}{\sum_{i\leq t}\lambda_i} - \muvec} \leq 2C\left(\sqrt{\frac{2}{n}} + \sqrt{\frac{2n}{t}}\right)\sqrt{\Tr(\Sigma)\sqrt{r_\alpha} + 3r_\alpha\sqrt{\Tr(\Sigma)\|\Sigma\|}}.
    \end{equation}
\end{corollary}

\subsection{Sub-$\psi$ distributions}
\label{sec:additional_sub_psi}

\paragraph{Fixed-time optimization.} Again, following the approach in Section~\ref{sec:sub-gaussian}, let us consider $(\Sigma, b)$-sub-exponential random vectors for a constant $\Sigma=\Sigma_t$.  
Applying Theorem~\ref{thm:sub-psi-css} at a fixed-time $t=n$ with  
\[\lambda = \sqrt{\frac{2d\log(1/\eps) + 2\log(1/\alpha)}{\|\Sigma\| n}},\]
gives the following analogue of Corollary~\ref{cor:fixed-time-subg} in the sub-exponential setting. 
\begin{corollary}
    \label{eq:sub-exp-fixed-time}
    Let $X_1,\dots,X_n$ be conditionally $(\Sigma,B)$-sub-exponential and satisfy Assumption~\ref{assump:ccm}. Fix $\eps\in(0,1)$. Then, with probability $1-\delta$, simultaneously for all $t\geq 1$, 
    \begin{equation}
        \bigg\|\frac{1}{t}\sum_{i\leq t} X_i - \mu\bigg\| \leq \frac{1}{1-\eps}\left(\frac{1}{\sqrt{n}} + \frac{\sqrt{n}}{t}\right)\sqrt{\frac{\|\Sigma\|(d\log(1/\eps) + \log(1/\alpha))}{2}}.
    \end{equation}
\end{corollary}
As in Corollary~\ref{cor:fixed-time-subg}, this bound is tightest at $t = n$ but remains time-uniform. At $t=n$, taking $\eps=1/2$ we obtain a width of $2\sqrt{2\|\Sigma\|(d\log(2) + \log(1/\alpha)/n}$, which has the optimal dependence on $d$, $n$, and $\|\Sigma\|$ for isotropic sub-exponential random vectors.

\paragraph{Sub-$\psi$ bounds for non super-Gaussian $\psi$.}
Theorem~\ref{thm:sub-psi-css} gave a CSS for sub-$\psi$ distributions with super-Gaussian $\psi$. 
While most common $\psi$ functions are super-Gaussian, not all are. For example, consider $\psi_{B,c,d}(\lambda) = \frac{1}{cd}\log(\frac{ce^{d\lambda} + de^{-c\lambda}}{c+d})$ for $c,d>0$ and $\lambda_{\max}=\infty$. This characterizes a sub-Bernoulli distribution (in the sense that $\psi_{B,c,d}$ is the CGF of a centered random variable supported on $-c$ and $d$). 
It's therefore worth providing a result which holds for general $\psi$ functions, both super-Gaussian and non-super-Gaussian.

The key to deriving Theorem~\ref{thm:sub-psi-css}  was noticing that a super-Gaussian $\psi$ allows us to transform the definition of a sub-$\psi$ distribution~\eqref{eq:sub-psi} from a statement involving vectors in $\sd$ to vectors in $\dball$. This allowed us to use uniform distributions over the ball when applying Proposition~\ref{prop:pac-bayes-supermart}. If $\psi$ is not super-Gaussian then such a transformation isn't possible. We must therefore work with distributions that are defined on the unit sphere. This leads us to the \emph{von Mises-Fisher distribution}. Before we expound on the details of this distribution, let us state the result it enables.

\begin{theorem}
\label{thm:sub-psi-vmf}
        Suppose $(\Xvec_t)_{t\geq 1}$ for $X_t\in\Re^d$, $d\geq 2$, are sub-$\psi$ and satisfy Assumption~\ref{assump:ccm}.   Let $(\lambda_t)_{t\geq 1}$ be a predictable sequence in $[0,\lambda_{\max})$. Then, with probability at least $1-\alpha$, for all $t\geq 1$, 
    \begin{equation}
    \label{eq:sub-psi-vmf}
    \norm{\frac{\sum_{i\leq t} \lambda_i \bs{X}_i}{\sum_{i\leq t}\lambda_i} - \muvec} \leq \frac{\sqrt{d}\sum_{i\leq t}\psi(\lambda_i)\norm{\Sigmavec_i} +   2\sqrt{d} + \sqrt{d}\log(1/\alpha)}{\frac{2}{3} \sum_{i\leq t}\lambda_i}.
    \end{equation}
 \end{theorem}

Notice that unlike Theorem~\ref{thm:sub-psi-css}, the dimension dependence in Theorem~\ref{thm:sub-psi-vmf} multiplies the width of the bound. That, combined with the prefactor of 3/2 typically leads to looser bounds than Theorem~\ref{thm:sub-psi-css}, so we encourage practitioners to use the latter when possible. 

Let us now introduce the von-Mises Fisher (vMF) distribution~\citep{fisher1953dispersion} (also known as Langevin distributions, \citealp{watamori1996statistical}). For $x\in\sd$, the vMF distribution has density 
$\gamma(\bs{x}; \varthetavec, \kappa) = C_d(\kappa)  \exp(\kappa\la \varthetavec,\bs{x}\ra)$, where $C_d(\kappa) = \kappa^{d/2-1}/[(2\pi)^{d/2} I_{d/2-1}(\kappa)]$ is the normalization constant. Here, $I_\ell$ is the modified Bessel function of the first kind of order $\ell$, and $\kappa > 0$ is a scalar ``concentration parameter".   
The expected value of the vMF distribution obeys 
\begin{equation}
\label{eq:ev_bessel}
    \E_{\thetavec\sim\gamma_{\varthetavec}} \thetavec = \int_{\sd} \thetavec \gamma(\d\thetavec; \varthetavec, \kappa) = A_d(\kappa) \varthetavec,
\end{equation}
where $A_d(\kappa) = I_{d/2}(\kappa) /I_{d/2-1}(\kappa)$. The vMF can be obtained by starting with a multivariate Gaussian and then conditioning on observations with unit norm. 
We refer to  \citet{mardia2000directional} for more on the 
vMF distribution, and for a general introduction to the field of ``directional statistics,'' which considers distributions on the sphere. 
Moreover, Lemma~\ref{lem:kl-vmf} in Appendix~\ref{sec:proofs} proves that 
\begin{equation}
    \label{eq:kl-vmf}
    \kl(\gamma(\cdot; \varthetavec_1,\kappa)\|\gamma(\cdot; \varthetavec_2,\kappa)) \leq  2\kappa A_d(\kappa).
\end{equation}
For an idea of its magnitude, Lemma~\ref{lem:adk} proves that $\sqrt{d}A_d(\sqrt{d}) \in (2/3,1)$.
The vMF is only well-defined for $d\geq 2$ so we restrict ourselves to this setting. 
We will often write $\gamma_{\varthetavec}(\cdot) = \gamma(\cdot; \varthetavec, \kappa)$, leaving the $\kappa$ implicit. 

\begin{proof}[Proof of Theorem~\ref{thm:sub-psi-vmf}]
    We apply Proposition~\ref{prop:pac-bayes-supermart} with prior $\gamma(\cdot; 1, \kappa)$ and posteriors $\gamma(\cdot; \varthetavec, \kappa)$ for all $\varthetavec\in\sd$ to the process defined by $M_t(\theta) = \prod_{i\leq t} \exp\{\lambda_i \la \theta, X_i-\mu\ra - \psi(\lambda_i) \la \theta, \Sigma_i\theta\ra\}$. This gives that with probability $1-\alpha$, for all $t\geq 1$, 
    \begin{align*}
        \sum_{i\leq t} \lambda_i \int_\sd \la \theta, X_i-\mu\ra \gamma_\varthetavec(\d\theta) &\leq \sum_{i\leq t} \psi(\lambda_i) \int_\sd \la \varthetavec, \Sigma_i \varthetavec\ra \gamma_\varthetavec(\d\theta) + 2\kappa A_d(\kappa) + \log(1/\alpha).
    \end{align*}
    Using~\eqref{eq:ev_bessel} and upper bounding $\la \varthetavec, \Sigma_i\varthetavec\ra $ as $\|\Sigma_i\|$, the above display becomes
    \begin{align*} 
    \sum_{i\leq t} A_d(\kappa) \lambda_i \la \varthetavec, X_i-\mu\ra 
        &\leq \sum_{i\leq t} \psi(\lambda_i) \|\Sigma_i\| + 2\kappa A_d(\kappa) + \log(1/\alpha). 
    \end{align*}
    Dividing both sides by $A_d(\kappa) \sum_{i\leq t}\lambda_i$, taking a supremum over $\varthetavec\in\sd$, and then taking $\kappa = \sqrt{d}$ and applying Lemma~\ref{lem:adk} gives the result. 
\end{proof}
}

\section{Stitched bounds with LIL rate}
\label{app:stitching}

Here we demonstrate how to achieve CSSs which shrink at a rate of $O(\sqrt{\log(\log(t))/t})$ and proving Theorems~\ref{thm:lil-subG}, \ref{thm:lil-log-concave}, \ref{thm:lil-sub-gamma}, \ref{thm:lil-sub-exp}, and \ref{thm:lil-cg}. This is done via ``stitching,'' which is a common technique in the literature on confidence sequences and originates in \citet{howard2021time}. 
Stitching involves applying distinct bounds over geometrically spaced epochs. Bounds in each epoch can be engineered to be tighter than a bound uniform over all time. The bounds are then carefully combined together using a union bound. To proceed, we define our ``stitching function'' as 
\begin{equation}
    \ell(m) = (m+1)^2 \zeta(2), \text{~ where ~} \zeta(2) = \sum_{k=1}^\infty k^{-2} \approx 1.645.  
\end{equation}
Note that $\sum_{m\geq 0}\frac{1}{\ell(m)} = 1$. In what follows, any function $\ell$ could be used such that $\sum_{m\geq 0}\frac{1}{\ell(m)}\leq 1$ but here we fix a particular choice for convenience.

\subsection{Stitched Sub-Gamma Bound}
\label{sec:stitched-gamma}
Here we consider the sub-$\psi$ bounds of Section~\ref{sec:sub-gaussian} with $\psi(\lambda) = \psi_{G,c}(\lambda) = \frac{\lambda^2}{2(1-c\lambda)}$, $c\in\Re$. 
We assume that $\Sigmavec = \Sigmavec_t$ for all $t$. 
Apply Theorem~\ref{thm:sub-psi-css} in each epoch $[2^m, 2^{m+1})$ with a constant parameter $\lambda_m$ and $\alpha_m$. We obtain that with probability $1-\alpha_m$, 
\begin{equation}
\label{eq:stitched_m}
    \norm{\frac{1}{t}\sum_{i\leq t}\Xvec_i - \muvec} \leq \frac{1}{1-\eps}\left(\frac{\psi(\lambda_m) \norm{\Sigmavec}t + d\log(1/\eps) + r_m}{\lambda_m t}\right) =: g_{m}(t),\quad \forall t\in [2^m,2^{m+1}),
\end{equation}
where $r_m =\log(1/\alpha_m)$.
Taking a union bound gives that with probability $1 - \sum_{m=0}^\infty \alpha_m$, simultaneously for all $t\geq 1$:
\begin{equation*}
    \norm{\frac{1}{t}\sum_{i\leq t}\Xvec_i - \muvec} \leq g_{m}(t), \text{~ where ~} m \leq \log_2(t)\leq m+1. 
\end{equation*}
We take $\alpha_m = \alpha / \ell(m)$ so that $\sum_{m=0}^\infty \alpha_m = \alpha$. The remaining work resides in ensuring that $g_{m}(t)$ shrinks at the desired iterated logarithm rate. We take 
\[\lambda_m = \psi^{-1}\left(\frac{d\log(1/\eps) + r_m}{2^m \norm{\Sigmavec}}\right),\]
where 
\[\psi^{-1}(u) = \frac{2}{c + \sqrt{c^2 + 2/u}}.\] 
Then $2r_m / t \geq r_m /2^m \geq  r_m/t$ so 
\[\frac{d\log(1/\eps) + r_m}{t\|\Sigma\|}\leq \psi(\lambda_m) \leq 2\frac{d\log(1/\eps) + r_m}{t\|\Sigma\|},\] 
and since $\psi^{-1}(u)$ is increasing in $u$, 
\[\psi^{-1}\left(\frac{d\log(1/\eps) + r_m}{2^m\|\Sigma\|}\right)\geq  \frac{2}{c + \sqrt{c^2 + 2t\|\Sigma\|/(r_m + d\log(1/\eps))}}.\]
Therefore, 
\begin{align*}
    g_m(t) &\leq \frac{1}{1-\eps}\left(\frac{\frac{d\log(1/\eps) + r_m}{2^m}t + d\log(1/\eps) + r_m}{t\lambda_m}\right) \\ 
    &\leq \frac{3}{1-\eps}\left(\frac{d\log(1/\eps) + r_m}{t\lambda_m}\right) \\
    &\leq  \frac{3}{1-\eps}\left(c + \sqrt{c^2 + \frac{2t\|\Sigma\|}{d\log(1/\eps) + r_m}}\right)\left(\frac{d\log(1/\eps) + r_m}{2t}\right).
\end{align*}
Since $c$ is a constant and is dominated asymptotically by $\frac{t\|\Sigma\|}{d\log(1/\eps) + r_m}$, we have (taking $\eps=1/2$ for simplicity), 
\begin{align*}
    g_m(t) &\lesssim \sqrt{\frac{t\|\Sigma\|}{d\log(2) + r_m}}\left(\frac{d\log(2) + r_m}{t}\right) \lesssim \sqrt{\frac{\|\Sigma\|(d + \log(1/\alpha) + \log\log(t))}{t}}.
\end{align*}
Then notice that 
\[r_m = \log(1/\alpha) + \log(\ell(m)) \leq \log(1/\alpha) + 2\log(\log_2(t) + 1) + \log(1.65),\]
where we've used that $m\leq \log_2(t)$.
It remains only to check that our choice of $\lambda_m$ is legal, i.e., $\lambda < 1/c$. Using that $\psi^{-1}$ is increasing, we have 
\begin{align*}
    \lambda_m &\leq \psi^{-1}\left(\frac{2d\log(2) + 2r_m}{t\|\Sigma\|}\right),
\end{align*}
which is less than $1/c$ iff $1< \sqrt{1 + t\|\Sigma\|/(d\log(2) + r_m)}$, which holds for all $t\geq 1$.

\subsection{Stitched sub-exponential bound}
Here we prove Theorem~\ref{thm:lil-sub-exp}. 
Let $\psi(\lambda) = \lambda^2/2$ for all $|\lambda|<1/b$. 
Here we make the same choices we did above, but the analysis can be tighter. In this case, $\psi^{-1}(u) = \sqrt{2u}$ so 
\[\lambda_m = \sqrt{\frac{d\log(1/\eps) + r_m}{2^{m-1}\|\Sigma\|}},\]
and 
\begin{align*}
    g_m(t)& = \frac{1}{1-\eps} \left(\frac{\lambda_m\|\Sigma\|}{2} +  \frac{d\log(1/\eps) + r_m}{\lambda_mt}\right) \\ 
    &=\frac{\sqrt{\|\Sigma\|(d\log(1/\eps) + r_m)}}{1-\eps} \left(\frac{1}{2\sqrt{2^{m-1}}} +  \frac{\sqrt{2^{m-1}}}{t}\right) \\ 
    &\leq \frac{\sqrt{\|\Sigma\|(d\log(1/\eps) + r_m)}}{1-\eps} \left(\frac{1}{\sqrt{t}} +  \frac{1}{\sqrt{2t}}\right) \\ 
    &<\frac{1.71}{1-\eps} \sqrt{\frac{\|\Sigma\|(d\log(1/\eps) + r_m)}{t}}.
\end{align*}
Finally, recall that we need to ensure that $\lambda_m \leq 1/b$, which holds if $t\geq 2b \sqrt{\frac{d\log(1/\eps) + r_m}{\|\Sigma\|}}$.

\subsection{Stitched sub-Gaussian bound}
\label{sec:stitched-subG}
Let $f_\beta(\Sigmavec) = \norm{\Sigmavec} + \Tr(\Sigmavec)/\beta$. 
Similarly to what was done above, 
applying Theorem~\ref{thm:sub-gaussian-dim-free} in each epoch $[2^m, 2^{m+1})$ with parameters $\lambda_m$, $\alpha_m$, and $\beta_m$ gives 
\begin{equation}
\label{eq:stitched_m_subg}
    \norm{\frac{1}{t}\sum_{i\leq t}\Xvec_i - \muvec} \leq \frac{t\psi_N(\lambda_m)f_{\beta_m}(\Sigmavec)+ \beta_m/2 + r_m}{t\lambda_m}  =: g_{m}(t),\quad \forall t\in [2^m,2^{m+1}),
\end{equation}
where $r_m =\log(1/\alpha_m)$. As before we set $\alpha_m =\alpha / \ell(m)$ and we take 
\[\lambda_m = \frac{\beta_m}{\sqrt{2^{m} \Tr(\Sigmavec)}},\text{~ and ~} \beta_m = c\sqrt{\frac{r_m \Tr(\Sigmavec)}{\norm{\Sigmavec}}}.\]
Then, noting that $2^m\leq t< 2^{m+1}$ so $\lambda_m \leq  \sqrt{2}\beta_m / \sqrt{t\Tr(\Sigmavec)}$, we have 
\begin{align*}
    g_m(t) &= \frac{\lambda_m f_{\beta_m}(\Sigmavec)}{2} + \frac{\beta_m/2 + r_m}{t\lambda_m} \\ 
    &\leq \frac{\beta_mf_{\beta_m}(\Sigma)}{\sqrt{2t\Tr(\Sigma)}} + \frac{1}{2}\sqrt{\frac{\Tr(\Sigma)}{t}} + \frac{r_m}{\beta_m}\sqrt{\frac{\Tr(\Sigma)}{t}} \\ 
    &= \frac{\beta_m \|\Sigma\|}{\sqrt{2t \Tr(\Sigma)}} + \sqrt{\frac{\Tr(\Sigma)}{2t}} + 
    \frac{1}{2}\sqrt{\frac{\Tr(\Sigma)}{t}} + \frac{r_m}{\beta_m}\sqrt{\frac{\Tr(\Sigma)}{t}}
    \\ 
    &= c\sqrt{\frac{r_m \|\Sigma\|}{2t}} + \frac{1}{c}\sqrt{\frac{r_m \|\Sigma\|}{t}} + \left(\frac{1}{\sqrt{2}} + \frac{1}{2}\right)\sqrt{\frac{\Tr(\Sigma)}{t}}.
\end{align*}
Optimizing over $c$ gives $c = 2^{1/4}$ and bounding $r_m$ as above gives the desired result.

\subsection{Stitched log-concave bound}
\label{sec:stitched-log-concave}
Our strategy is the same as in the previous sections. In this case we have 
\[g_m(t) = \frac{2C h_\Sigma(1) \lambda_m^2 t + 4Ch_\Sigma(\log(2/\alpha_m))}{\lambda_m t}.\]
Consider taking 
\[\lambda_m = \kappa \sqrt{\frac{h_\Sigma(\log(2/\alpha_m))}{h_\Sigma(1)2^m}},\]
for some $\kappa>0$ to be determined later. 
Then, using that $2^m\leq t$ and $2^m\geq 2/t$, 
\begin{align*}
    g_m(t) &= 2C\kappa \sqrt{\frac{h_\Sigma(1)h_\Sigma(\log(2/\alpha_m))}{2^m}} + \frac{4C}{\kappa t} \sqrt{h_\Sigma(1)h_\Sigma(\log(2/\alpha_m)) 2^m} \\ 
    &\leq \sqrt{h_\Sigma(1)h_\Sigma(\log(2/\alpha_m))} \left(2C\kappa \sqrt{\frac{2}{t}} + \frac{4C}{\kappa\sqrt{t}}\right). 
\end{align*}
Optimizing over $\kappa$ gives $\kappa = 2^{1/4}$, in which case we obtain 
\[g_m(t) \leq 6.73 C \sqrt{\frac{h_\Sigma(1)h_\Sigma(\log(2/\alpha_m)}{t}}, \]
where
\begin{align*}
  \log(2/\alpha_m) &\leq  \log(2\ell(\log_2(t))/\alpha) \\
  &\leq  \log(2(\log_2(t)+1)^2 \cdot 1.65 /\alpha) \\
  &= 2\log(\log_2(t) +1) + \log(3.3 / \alpha).  
\end{align*}
Recalling that for $u\geq 1$, $h_\Sigma(1) h_\Sigma(u) \leq \Tr(\Sigma) \sqrt{u} + u\sqrt{\Tr(\Sigma)\|\Sigma\|}$ furnishes the claimed iterated logarithm rate and proves Theorem~\ref{thm:lil-log-concave}. 

\subsection{Stitched Catoni-Giulini bound}
\label{sec:stitched-cg}
Let $A_m = 2\exp(2/\beta_m + 2) + 1$. Following the strategy above, here we have 
\[g_m(t) = \frac{v^2 A_m \lambda_m^2 t + \beta_m/2 + r_m}{\lambda_m t} = v^2 A_m \lambda_m + \frac{\beta_m/2 + r_m}{\lambda_m t}.\]
Take 
\[\lambda_m = c\sqrt{\frac{\beta_m/2 + r_m}{v^2 A_m 2^m}},\]
for some $c>0$. 
Then 
\begin{align*}
    g_m(t) &= v\left(c\sqrt{\frac{A_m(\beta_m/2 + r_m)}{2^m}} + \frac{1}{c}\sqrt{\frac{2^m A_m(\beta_m / 2 + r_m)}{t^2}}\right) \\ 
    &\leq v\left(c\sqrt{\frac{2A_m(\beta_m/2 + r_m)}{t}} + \frac{1}{c}\sqrt{\frac{A_m(\beta_m / 2 + r_m)}{t}}\right) \\
    &< 1.69 v\sqrt{\frac{A_m(\beta_m/2 + r_m)}{t}},
\end{align*}
if we take $c = 2^{-1/4}$.

\section{Omitted proofs}
\label{sec:proofs}

\begin{lemma}
    \label{lem:trSigma2}
    Let $\Sigma\in\Re^{d\times d}$ be a covariance matrix. Then 
    \begin{equation}
        \frac{2\|\Sigma\|}{1 + \sqrt{d}}\leq \frac{\Tr(\Sigma^2)}{\Tr(\Sigma)} \leq \|\Sigma\|.
    \end{equation}
\end{lemma}
\begin{proof}
Let $\Sigma$ have eigenvalues $e_1\geq e_2\geq \dots\geq e_d$. 
The second inequality is easy: $\Tr(\Sigma^2) = \sum_{1\leq i\leq d}  e_i^2 \leq e_1\sum_{i\leq d}e_i = \|\Sigma\| \Tr(\Sigma)$. As for the first inequality, let $u = \frac{1}{d-1}\sum_{2\leq i\leq d} e_i$ be the average of the smallest $d-1$ eigenvalues. Using Jensen's inequality, write 
\begin{align*}
    \frac{\Tr(\Sigma^2)}{\Tr(\Sigma)} &= \frac{\sum_{1\leq i\leq d}e_i^2 }{\sum_{1\leq i\leq d}e_i } \geq  \frac{e_1^2 + (d-1)u^2}{e_1 + (d-1)u} = : f(u).
\end{align*}
The minimum of $f$ for $u>0$ occurs at $u^* = e_1(\sqrt{d} + 1)/(d-1)$ giving $f(u^*) = 2e_1 / (1 + \sqrt{d}) = 2\|\Sigma\|/(1 + \sqrt{d})$, which proves the claim.   
\end{proof}

\edit{
\begin{lemma}
\label{lem:kl-vmf}
    The Kullback-Leibler divergence from $\gamma(\varthetavec_0, \kappa)$ to $\gamma(\varthetavec_1, \kappa)$ satisfies
    \begin{equation*}
        \kl(\gamma(\varthetavec_1, \kappa) \| \gamma(\varthetavec_0, \kappa)) \le 2\kappa A_d(\kappa).
    \end{equation*}
\end{lemma}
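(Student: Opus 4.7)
The key observation is that both densities share the same concentration $\kappa$, hence the same normalization constant $C_d(\kappa)$, which will cancel in the log-ratio. This reduces the KL divergence to a single expectation that can be evaluated exactly using the known mean formula \eqref{eq:ev_bessel}.

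\textbf{Step 1: Cancel normalizers.} Using the density $\gamma(\xvec;\varthetavec,\kappa) = C_d(\kappa)\exp(\kappa\la\varthetavec,\xvec\ra)$, I would first write
\[
\log\frac{\gamma(\xvec;\varthetavec_1,\kappa)}{\gamma(\xvec;\varthetavec_0,\kappa)} = \kappa\la \varthetavec_1-\varthetavec_0,\xvec\ra,
\]
since the $C_d(\kappa)$ factors cancel.

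\textbf{Step 2: Take the expectation under $\gamma(\varthetavec_1,\kappa)$.} By definition,
\[
\kl(\gamma(\varthetavec_1,\kappa)\|\gamma(\varthetavec_0,\kappa)) = \kappa\la \varthetavec_1-\varthetavec_0,\Expw_{\xvec\sim\gamma(\varthetavec_1,\kappa)}\xvec\ra = \kappa A_d(\kappa)\la \varthetavec_1-\varthetavec_0,\varthetavec_1\ra,
\]
where I invoke \eqref{eq:ev_bessel} to replace the mean by $A_d(\kappa)\varthetavec_1$.

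\textbf{Step 3: Bound the inner product.} Since $\varthetavec_0,\varthetavec_1\in\sd$, I have $\|\varthetavec_1\|^2 = 1$ and $\la\varthetavec_0,\varthetavec_1\ra \ge -1$ (Cauchy--Schwarz), so
\[
\la \varthetavec_1-\varthetavec_0,\varthetavec_1\ra = 1 - \la\varthetavec_0,\varthetavec_1\ra \le 2.
\]
Combining with Step 2 yields $\kl \le 2\kappa A_d(\kappa)$, as claimed.

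\textbf{Obstacles.} There are essentially none; the proof is a two-line calculation once one notices the normalizer cancellation and recalls the vMF mean. The only mild subtlety is checking that the claimed sign is correct (the KL is nonnegative because $1 - \la\varthetavec_0,\varthetavec_1\ra \ge 0$ and $A_d(\kappa)\ge 0$), which acts as a sanity check on the computation.
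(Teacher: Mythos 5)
Your proof is correct and follows essentially the same route as the paper's: cancel the normalizing constants $C_d(\kappa)$ in the log-ratio, reduce the KL to $\kappa\la\varthetavec_1-\varthetavec_0,\E\Xvec\ra$, apply the vMF mean formula, and bound $\la\varthetavec_1-\varthetavec_0,\varthetavec_1\ra\le 2$ using that both vectors lie on $\sd$.
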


\begin{proof} Let $\Xvec \sim \gamma(\varthetavec_1, \kappa)$.  By a direct calculation,
    \begin{align*}
        \kl(\gamma(\varthetavec_1, \kappa) \| \gamma(\varthetavec_0, \kappa))
        =  \E \left[ \log \frac{\gamma(\Xvec;\varthetavec_1, \kappa)}{\gamma(\Xvec;\varthetavec_0, \kappa)} \right]
        =  \E \left[\kappa\la \varthetavec_1 - \varthetavec_0, \Xvec \ra  \right]
        =   \kappa \la \varthetavec_1 - \varthetavec_0, A_d(\kappa) \varthetavec_1 \ra.
    \end{align*}
    Since both $ \varthetavec_0, \varthetavec_1$ are on the unit sphere $\sd$, the inner product $\la \varthetavec_1 - \varthetavec_0, \varthetavec_1 \ra$ is upper bounded by 2, which concludes the proof.
\end{proof}

\begin{lemma} 
\label{lem:adk}
For any $d\ge 1$,
    $\frac{2}{3} < \sqrt{d} A_d(\sqrt{d}) < 1$ where $A_d(\kappa)$ is the vMF constant. 
\end{lemma}
\begin{proof}
    Note that,
    \begin{align*}
        \sqrt{d} A_d(\sqrt{d}) = \frac{\sum_{m=0}^\infty \sqrt{d} \frac{(\sqrt{d}/2)^{2m+d/2}}{m! \Gamma(m+d/2+1)}}{\sum_{m=0}^\infty \frac{(\sqrt{d}/2)^{2m+d/2-1}}{m! \Gamma(m+d/2)}}.
    \end{align*}
    Denote the $m$\textsuperscript{th} summands of the numerator and denominator by
    \begin{equation*}
        W(m,d) = \sum_{m=0}^\infty \sqrt{d} \frac{(\sqrt{d}/2)^{2m+d/2}}{m! \Gamma(m+d/2+1)}, \quad V(m,d) = \frac{(\sqrt{d}/2)^{2m+d/2-1}}{m! \Gamma(m+d/2)}.
    \end{equation*}
    Then,
    \begin{equation*}
        \frac{W(m,d)}{V(m,d)} = \frac{d/2}{m+d/2} \le 1,
    \end{equation*}
    equality only when $m=0$.
    Therefore, 
    \begin{equation*}
        \sqrt{d} A_d(\sqrt{d}) = \frac{ \sum_{m=0}^\infty V(m,d) \frac{d/2}{m+d/2} }{\sum_{m=0}^\infty V(m,d) } < 1.
    \end{equation*}
    Further, observe that
    \begin{equation*}
        \frac{W(m,d)}{V(m+1,d)} = {2(m+1)}.
    \end{equation*}
    Therefore
    \begin{equation*}
         \sqrt{d} A_d(\sqrt{d}) = \frac{ \sum_{m=1}^\infty  2m V(m,d) }{V(0,d) + \sum_{m=1}^\infty V(m,d) },
    \end{equation*}
    and 
    \begin{align*}
        \frac{1}{  \sqrt{d} A_d(\sqrt{d})} & =   \frac{V(0,d) + \sum_{m=1}^\infty V(m,d) }{ \sum_{m=1}^\infty 2m V(m,d)  } < \frac{V(0,d)+ \sum_{m=1}^\infty V(m,d)  }{\sum_{m=1}^\infty 2V(m,d)}
        \\ 
        &<  \frac{V(0,d)}{2V(1,d)} + \frac{1}{2} = 1 + 
        \frac{1}{2} =  \frac{3}{2}.
    \end{align*}
    Thus we conclude that $\frac{2}{3} < \sqrt{d}A_d(\sqrt{d}) < 1$ for all $d\ge 1$.
\end{proof}
}

\subsection{Proofs for Section~\ref{sec:light-tailed}}
\label{sec:proofs-light-tailed}

\paragraph{Proof of Theorem~\ref{thm:sub-gaussian-dim-free}}
By definition of sub-Gaussianity~\eqref{eq:subG-condition}, 
if $\Xvec_1,\Xvec_2,\dots\sim P$ are $\Sigmavec_t$-sub-Gaussian, then the process $(H_t(\thetavec))_{t\geq 1}$ where 
\[H_t(\thetavec) =\prod_{i\leq t} \exp\left\{\lambda_i\la\thetavec, \Xvec_i-\muvec\ra - \frac{\lambda_i^2}{2} \la \thetavec, 
\Sigmavec_i\thetavec\ra\right\},\]
is a supermartingale for all $\thetavec\in\Re^d$. 
Let $\rho_\varthetavec$ be a Gaussian centered at $\varthetavec$ with covariance $\beta^{-1}I_d$. 
Applying Proposition~\ref{prop:pac-bayes-supermart} with the prior $\rho_{0}$ and family of posteriors $\rho_{\varthetavec}$, $\varthetavec\in\sd$, we obtain that with probability $1-\alpha$, simultaneously for all $t\geq 1$ and $\varthetavec\in\sd$,
\begin{align*}
    \int\sum_{i\leq t} \lambda_i\la \thetavec, \Xvec_i-\muvec\ra\rho_{\varthetavec}(\d\thetavec) &\leq \int \sum_{i\leq t} \frac{\lambda_i^2}{2}\la \thetavec, \Sigmavec_i\thetavec\ra\rho_{\varthetavec}(\d\thetavec)+ \kl(\rho_{\varthetavec}\| \rho_{\bs{0}}) + \log(1/\alpha) \\ 
    &\leq \sum_{i\leq t}\frac{\lambda_i^2}{2}(\la \varthetavec,\Sigmavec_i\varthetavec\ra + \beta^{-1}\Tr(\Sigmavec_i))+ \frac{\beta}{2} + \log(1/\alpha),
\end{align*}
where we've used the formula 
\begin{equation}
    \label{eq:kl-div-gaussians} \kl(N(\varthetavec_1,\Sigma_1)\|N(\varthetavec_2,\Sigma_2)) = \frac{1}{2} \left(\Tr(\Sigmavec_2^{-1}\Sigmavec_1) + \la \varthetavec_2-\varthetavec_1,\Sigmavec_2^{-1}(\varthetavec_2 - \varthetavec_1)\ra - d + \log\frac{|\Sigmavec_2|}{|\Sigmavec_1|}\right). 
\end{equation}
The symmetry of the Gaussian distribution implies that $\int\sum_{i\leq t} \lambda_i\la \thetavec, X_i-\mu\ra\rho_\varthetavec(\d\theta) = \sum_{i\leq t}\lambda_i\la \varthetavec, \Xvec_i-\muvec\ra$.  Noticing that $\la\varthetavec,\Sigmavec_t\varthetavec\ra \leq \norm{\Sigmavec_t}$, we obtain that with probability $1-\alpha$, simultaneously for all $t\geq 1$, 
\begin{align*}
  \norm{\sum_{i\leq t}\lambda_i(\Xvec_i-\muvec)} &= \sup_{\varthetavec\in\sd} \sum_{i\leq t}\lambda_i\la \varthetavec, \Xvec_i-\muvec\ra \\ 
  &\leq \sum_{i\leq t}\frac{\lambda_i^2}{2}(\|\Sigma_i\| + \beta^{-1}\Tr(\Sigmavec_i))+ \frac{\beta}{2} + \log(\frac{1}{\alpha}),  
\end{align*}
which, after rearranging, is the claimed inequality. 

\paragraph{Proof of Theorem~\ref{thm:log-concave}}
We take our  parameter space in Proposition~\ref{prop:pac-bayes-supermart} to be $\Theta = \Re^d$. Let $\nu$ Gaussian with mean $\bs{0}$ and covariance $\beta^{-1}\Sigmavec$ and let $\overline{\rho}_{\bs{u}}$ be \edit{a truncated} Gaussian \edit{with} mean $\bs{u}\in\Sigmavec^{1/2}\sd$, covariance $\beta^{-1}\Sigmavec$, and radius $r>0$. Being slightly loose with notation and writing $\d\overline{\rho}_{\bs{u}}$ for the density of $\overline{\rho}_{\bs{u}}$, the density of the truncated normal can be written as 
\[\d\overline{\rho}_{\bs{u}}(\bs{x}) = \frac{\ind\{\norm{\bs{x} - \bs{u}}\leq r\}}{Z}\d \rho_{\bs{u}},\]
where $Z$ is some normalizing constant and $\rho_{\bs{u}}$ is the usual non-truncated Gaussian. We follow \citet{zhivotovskiy2024dimension} in our calculation of the KL-divergence for truncated Gaussians. 
For a vector $\bs{u}\in \Sigmavec^{1/2}\mathbb{S}^{d-1}$, the KL-divergence between a truncated normal and $\nu$ is therefore
\begin{align*}
    \kl(\overline{\rho}_{\bs{u}} \| \nu) &= \int\log\left(\frac{1}{Z}\frac{\d \rho_{\bs{u}}}{\d\nu}(\thetavec)\right) \overline{\rho}_{\bs{u}}(\d\thetavec) \\ 
    &= \log(\frac{1}{Z}) + \frac{1}{2}\int (-\la \thetavec-\bs{u},\beta\Sigmavec^{-1}(\thetavec-\bs{u})\ra + \la  \theta, \beta\Sigmavec^{-1}\thetavec\ra )\overline{\rho}_{\bs{u}}(\d\thetavec) \\ 
    &= \log(\frac{1}{Z}) + \frac{\beta}{2}\int (2\la \thetavec,\Sigmavec^{-1}\bs{u}\ra - \la  \bs{u}, \Sigmavec^{-1}\bs{u}\ra )\overline{\rho}_{\bs{u}}(\d\thetavec) \\ 
    &= \log(\frac{1}{Z}) + \frac{\beta\la \bs{u},\Sigmavec^{-1} \bs{u}\ra}{2} = \log(\frac{1}{Z}) + \frac{\beta}{2}, 
\end{align*}
where we've used that $\bs{u} = \Sigmavec^{1/2}\varthetavec$ for some $\varthetavec\in\sd$ so $\la \bs{u},\Sigma^{-1}\bs{u}\ra = \la \varthetavec,\varthetavec\ra = 1$. 
We also have $Z = \Pr(\norm{\thetavec - \bs{u}}\leq r)$ where $\thetavec\sim \rho_{\bs{u}}$. Equivalently, $Z=\Pr(\norm{\bs{Y}}\leq r)$ where $\bs{Y}$ is a normal with mean $\bs{0}$ and covariance $\beta^{-1}\Sigmavec$. Hence $1 - Z = \Pr(\norm{\bs{Y}}>r)\leq \E\norm{\bs{Y}}^2/r^2 = \beta^{-1}\Tr(\Sigmavec)/r^2$. Thus, taking $r = \sqrt{2 \beta^{-1}\Tr(\Sigmavec)}$ yields $Z\geq 1/2$ and we obtain 
\begin{align}
\kl(\overline{\rho}_{\bs{u}}\|\nu) \leq \log(2) + \frac{\beta}{2}. 
\end{align}
Now, consider the process $(L_t(\thetavec))$ defined as 
\begin{equation}
\label{eq:nsm-log-concave}
    L_t(\thetavec) = \prod_{i\leq t}\exp\left\{\lambda_i \la \thetavec, \Sigmavec^{-1/2}\Xvec_i \ra - \log \E[\exp(\lambda_i \la \thetavec, \Sigmavec^{-1/2}\Xvec \ra)|\F_{i-1}] \right\}, 
\end{equation}
which is a nonnegative martingale with initial value 1 as long as \[\log \E\left[\exp(\lambda_t \la \thetavec, \Sigmavec^{-1/2}\Xvec \ra)|\F_{t -1}\right]<\infty.\] 
($L_t(\theta)$ is a product of exponentials, each with expected value 1.) 
We want to use Lemma~\ref{lem:psi1_bound} to bound this term. Note that by the log-concavity condition, we have 
\[\norm{\la \thetavec, \Sigmavec^{-1/2}\Xvec_t \ra - \E[\la \thetavec, \Sigmavec^{-1/2}\Xvec\ra|\F_{t-1}]}_{\Phi_1} = \norm{\la \Sigmavec^{-1/2}\thetavec, \Xvec_t - \muvec\ra}_{\Phi_1} \leq C \norm{\thetavec}.\]
Moreover, if $\thetavec \sim \overline{\rho}_{\bs{u}}$ then by definition of the truncated Gaussian and using that $\|u\| = \| \Sigma^{1/2}\varthetavec\| \leq \sqrt{\|\Sigma\|}$,
\begin{align*}
    \norm{\thetavec} \leq r + \norm{\bs{u}} \leq r + \sqrt{\norm{\Sigmavec}} = \sqrt{2\beta^{-1}\Tr(\Sigmavec)} + \sqrt{\norm{\Sigmavec}} := L_\beta.
\end{align*}
Combining this with Lemma~\ref{lem:psi1_bound} we obtain 
\begin{align}
    & \log \E\left[\exp(\lambda_t \la \thetavec, \Sigmavec^{-1/2}\Xvec\ra)|\F_{t-1}\right] 
    \leq \lambda_t \la \thetavec, \Sigmavec^{-1/2}\muvec\ra + 4\lambda_t^2 C^2 L_\beta^2, \label{eq:log-mgf-bound}
\end{align}
if 
\[|\lambda_t| \leq \frac{1}{2CL_\beta}.\]
(Note that the first term on the right hand side of \eqref{eq:log-mgf-bound} comes from the mean $\E[\la \thetavec, \Sigmavec^{-1/2}\Xvec\ra|\F_{t-1}]$ that lives on the left hand side in Lemma~\ref{lem:psi1_bound}.)
Now, applying Proposition~\ref{prop:pac-bayes-supermart} with $L_t(\thetavec)$ and using \eqref{eq:log-mgf-bound} gives that with probability $1-\alpha$, for all $\bs{u}\in\Sigmavec^{1/2}\sd$ and $t\geq 1$,
\begin{align*}
    &\int\sum_{i\leq t}\lambda_i \la \thetavec, \Sigmavec^{-1/2}\Xvec_i\ra \overline{\rho}_{\bs{u}}(\d\thetavec) \\
    &\leq 
    \int \sum_{i\leq t}\log \E\left[\exp(\lambda_i \la \thetavec, \Sigmavec^{-1/2}\Xvec\ra)|\F_{t-1}\right] \overline{\rho}_{\bs{u}}(\d\thetavec) + \frac{\beta}{2} + \log(\frac{2}{\alpha}) \\ 
    &\leq \sum_{i\leq t}\lambda_i \la\bs{u}, \Sigmavec^{-1/2}\muvec\ra + 4
    C^2L_\beta^2\sum_{i\leq t}\lambda_i^2 + \frac{\beta}{2} + \log(\frac{2}{\alpha}),
\end{align*}
where the final line uses that 
$\overline{\rho}_{\bs{u}}$ is symmetric hence $\int \la \thetavec, \Sigmavec^{-1/2}\muvec\ra \overline{\rho}_{\bs{u}}(\d\thetavec) = \la \bs{u},\Sigmavec^{-1/2}\muvec\ra$. 
From here, since $\bs{u} = \Sigmavec^{1/2}\varthetavec$ for some $\varthetavec\in\sd$, the above inequality rearranges to read 
\begin{align*}
    \sum_{i\leq t}\lambda_i\la \varthetavec, \Xvec_i - \muvec \ra \leq 4C^2L_\beta^2\sum_{i\leq t}\lambda_i^2 + \frac{\beta}{2} + \log(\frac{2}{\alpha}).
\end{align*}
Consider setting $\lambda_t = \frac{\widehat{\lambda}_t}{2CL_\beta}$, where $0<\widehat{\lambda_t}\leq 1$. Since $\varthetavec$ was arbitrary in the above display, we obtain that with probability $1-\alpha$, for all $t\geq 1$,
\begin{align*}
    \norm{\frac{\sum_{i\leq t}\widehat{\lambda}_i \Xvec_i}{\sum_{i\leq t}\widehat{\lambda}_i} - \muvec} \leq \frac{2CL_\beta (\sum_{i\leq t} \widehat{\lambda}_i^2 + \beta/2 + \log(2/\alpha))}{\sum_{i\leq t}\widehat{\lambda}_i}.
\end{align*}
Set \edit{$\ell= \log(2/\alpha)$} and 
consider choosing $\beta = 2\ell$, in which case $L_\beta \leq \sqrt{\Tr(\Sigmavec)} + \sqrt{\norm{\Sigmavec}}=h_\Sigma(1)$ and the above display becomes 
\begin{align*}
    \norm{\frac{\sum_{i\leq t}\widehat{\lambda}_i \Xvec_i}{\sum_{i\leq t}\widehat{\lambda}_i} - \muvec} &\leq \frac{2C(\sqrt{\Tr(\Sigmavec)} + \sqrt{\norm{\Sigmavec}}) \sum_{i\leq t} \widehat{\lambda}_i^2}{\sum_{i\leq t}\widehat{\lambda_i}}  + \frac{4C(\sqrt{\Tr(\Sigmavec)\ell} + \ell\sqrt{\norm{\Sigmavec}})}{\sum_{i\leq t}\widehat{\lambda}_i} \\ 
    &= \frac{2C h_\Sigma(1) \sum_{i\leq t}\widehat{\lambda}_i^2 + 4Ch_\Sigma(u)}{\sum_{i\leq t}\widehat{\lambda}_i},
\end{align*}
which is the desired bound. 

\paragraph{Proof of Theorem~\ref{thm:sub-psi-css}}
\edit{
By Lemma~\ref{lem:sub-psi-over-ball} we may apply Proposition~\ref{prop:pac-bayes-supermart} with the process defined by 
\[M_t(\theta) = \prod_{i\leq t} \exp\{ \lambda_i\la \theta, X_i-\mu\ra - \psi(\lambda_i)\la \theta, \Sigma_i\theta\ra\},\]
for all $\theta\in\dball$ (not just for all $\theta\in\sd$ as is suggested by~\eqref{eq:sub-psi}).} 
Let $\rho_\vartheta$ be a uniform distribution centered at $\vartheta\in (1-\eps)\sd\subset\dball$ with radius $\eps$. 
Proposition~\ref{prop:pac-bayes-supermart} gives that with probability $1-\alpha$, for all $\vartheta\in (1-\eps)\sd$, 
\begin{align*}
    \sum_{i\leq t}\lambda_i \int\la \theta, X_i-\mu\ra \rho_\vartheta(\d\theta) & \leq \sum_{i\leq t}\psi(\lambda_i) \int \la \theta,\Sigma_i\theta\ra \rho_\varthetavec(\d\theta)+ d\log(\frac{1}{\eps}) + \log(\frac{1}{\alpha}), 
\end{align*}
where we've used the KL-divergence as calculated in~\eqref{eq:kl-uniforms}. 
If $\theta \sim \rho_\varthetavec$ then $\norm{\theta}\leq \norm{\varthetavec} + \eps\leq 1$ by definition of $\rho_\varthetavec$ and $\la \theta,\Sigma_i\theta\ra \leq \sup_{\theta,\vartheta\in\sd}\la \theta, \Sigma_i\vartheta\ra\leq \|\Sigma_i\|$. Moreover, since $\rho_\varthetavec$ is symmetric, 
\begin{align*}
&\quad \sup_{\varthetavec\in (1-\eps)\sd} \sum_{i\leq t}\lambda_i \int\la \theta, X_i-\mu\ra \rho_\varthetavec(\d\theta)  \\
 &= 
    \sup_{\varthetavec\in (1-\eps)\sd} \left\la \varthetavec,\sum_{i\leq t}\lambda_i(X_i - \mu)\right\ra \\
    &=(1-\eps)\bigg\|\sum_{i\leq t}\lambda_i (X_i - \mu)\bigg\|.
\end{align*}
Therefore,  with probability $1-\alpha$, for all $t\geq 1$,
\begin{equation*}
    \norm{\frac{\sum_i \lambda_i X_i}{\sum_i\lambda_i } - \mu} \leq \frac{\sum_{i\leq t}\psi(\lambda_i)\|\Sigma_i\| + d\log(1/\eps) + \log(1/\alpha)}{ (1-\eps)\sum_i\lambda_i}, 
\end{equation*}
which is the desired result. 

\paragraph{Proof of Lemma~\ref{lem:sub-psi-width=}}
    If $\psi$ is CGF-like, then \citet[Proposition 1]{howard2020time} shows that there exists some $a,c>0$ such that 
    \[\psi(\lambda) \leq a \psi_{G,c}(\lambda) = \frac{a\lambda^2}{2(1-c\lambda)}.\]
    If $\lambda_t\xrightarrow{t\to\infty}0$, then 
    \[\psi(\lambda_t) / \psi_N(\lambda_t) \leq \frac{a}{1-c\lambda_t} \xrightarrow{t\to\infty}a.\]
    Therefore, we may write $\psi(\lambda_t)/\psi_N(\lambda_t) = a + u_t$ for some $u_t$ that goes to 0 as $t\to\infty$. Therefore, 
    \begin{align*}
        \sum_{i\leq t} \psi(\lambda_i)\|\Sigma_i\| &= \sum_{i\leq t} \frac{\psi(\lambda_i)}{\psi_N(\lambda_i)}\psi_N(\lambda_i)\|\Sigma_i\| \\ 
        &= \sum_{i\leq t} \psi_N(\lambda_i)\|\Sigma_i\|(a+u_i)  \\ 
        &\lesssim (d + r)\sum_{i\leq t} \frac{a+u_i}{i\log (i+1)} \\ 
        &\lesssim (d + r)\log\log(t).
    \end{align*}
    Hence, 
    \begin{align*}
        W_t &= \frac{\sum_{i\leq t}\psi(\lambda_i) \|\Sigma_i\| + d\log(1/\eps) + \log(1/\alpha)}{ (1-\eps)\sum_i\lambda_i} \\
        &\lesssim \frac{(d + r)\log\log(t) + d + r}{ \sqrt{\|\Sigma\|(d\log(1/\eps) + r)t/\log(t)\|\Sigma\|}} \\ 
        &= \widetilde{O}\left(\sqrt{\frac{\|\Sigma\|(d + r)\log t}{t}}\right),
    \end{align*}
    which is the desired rate.

\paragraph{Proof of Theorem~\ref{thm:time-varying-subG}}
Let $\Xvec_t$ be conditionally $\sigma_t$-sub-Gaussian. That is, 
\begin{equation}
\label{eq:cond-subG}
    \sup_{\bs{v}\in\sd} \E_P[\exp(\lambda \la \bs{v},\Xvec_t-\muvec_t\ra) |\F_{t-1}] \leq \exp(\frac{\lambda^2\sigma_t^2}{2}).
\end{equation}
Note that we allow the sub-Gaussian parameter to change at each timestep. From \eqref{eq:cond-subG} it follows that the process defined by 
\begin{equation*}
    M_t(\thetavec,\lambda) = \prod_{i\leq t} \exp\left\{\lambda\la \thetavec, \Xvec_i-\muvec_i\ra - \frac{\lambda^2\sigma_i^2}{2}\right\},
\end{equation*}
is a nonnegative $P$-supermartingale for all $\lambda\in\Re$. We will consider the supermartingale resulting from mixing over a Gaussian: 
\begin{equation}
    M_t(\thetavec) := \int_{\lambda\in\Re} M_t(\thetavec, \lambda) \pi(\lambda; 0,a^2)\d\lambda,
\end{equation}
where $\pi$ is the density of a univariate Gaussian with mean 0 variance $a^2$. To compute $M_t(\thetavec)$ let $D_t(\thetavec) = \sum_{i\leq t} \la\thetavec, \Xvec_i-\muvec_i\ra$, $H_t = \sum_{i\leq t} \sigma_i^2$, and write 
\begin{align*}
    M_t(\thetavec) &= \frac{1}{a\sqrt{2\pi}}\int_{\lambda\in\Re} \exp\left\{\lambda D_t(\thetavec) - \frac{\lambda^2}{2}H_t\right\} \exp\left\{-\frac{\lambda^2}{2a^2}\right\}\d\lambda \\ 
    &= \frac{1}{a\sqrt{2\pi}}\int_{\lambda\in\Re} \exp\left\{\frac{2\lambda a^2 D_t(\thetavec) - \lambda^2(1 + H_ta^2)}{2a^2} \right\}\d\lambda.  
\end{align*}
Put $u_t = 1 + H_ta^2$ and $v_t = a^2 D_t(\thetavec)$ and note that these are constants with respect to $\lambda$. Rewrite the above as 
\begin{align*}
    M_t(\thetavec) &= \frac{1}{a\sqrt{2\pi}}\int_{\lambda\in\Re} \exp\left\{\frac{-\lambda^2 u_t + 2\lambda v_t}{2a^2}\right\}\d\lambda \\ 
    &= \frac{1}{a\sqrt{2\pi}}\int_{\lambda\in\Re} \exp\left\{\frac{-(\lambda^2 - 2\lambda v_t/u_t)}{2a^2/u_t}\right\}\d\lambda \\
    &= \frac{1}{a\sqrt{2\pi}}\int_{\lambda\in\Re} \exp\left\{\frac{-(\lambda - v_t/u_t)^2 + (v_t/u_t)^2}{2a^2/u_t}\right\}\d\lambda \\
    &= \frac{1}{a\sqrt{2\pi}}\int_{\lambda\in\Re} \exp\left\{\frac{-(\lambda - v_t/u_t)^2}{2a^2/u_t}\right\}\d\lambda \exp\left\{\frac{v_t}{2a^2u_t}\right\}\\ 
    &= \frac{1}{\sqrt{u_t}} \exp\left\{\frac{v_t^2}{2a^2u_t}\right\}  \\ 
    &= \exp\left\{\frac{v_t^2}{2a^2u_t} - \frac{1}{2}\log(u_t)\right\},
\end{align*}
where the penultimate equality follows because the integrand is proportional to the density of a Gaussian with mean $v_t/u_t$ and variance $a^2/u_t$. We conclude that 
\begin{equation}
    M_t(\thetavec) = \exp\left\{\frac{a^2D_t(\thetavec)}{2(1 + a^2H_t)} - \log\sqrt{1 + a^2H_t}\right\}, 
\end{equation}
is a nonnegative $P$-supermartingale. 
We can now apply Proposition~\ref{prop:pac-bayes-supermart} with uniform distributions as we did for sub-$\psi$ distributions in Section~\ref{sec:sub-psi}. 
Let $\rho_\varthetavec$ be a uniform distribution over the unit sphere of radius $\eps$ centered at $\varthetavec\in (1-\eps)\sd$. Using~\eqref{eq:kl-uniforms} to bound the KL-divergence, we obtain that for all $\varthetavec\in\sd$, with probability $1-\alpha$, simultaneously for all $t\geq 1$, 
\begin{align}
\label{eq:mixture_int}
    \int_{\sd} \frac{a^2 D_t^2(\thetavec)}{2(1 + a^2H_t)}\rho_{\varthetavec}(\d\thetavec) &\leq \int_{\sd} \log\sqrt{1 + a^2 H_t}\rho_{\varthetavec}(\d\thetavec) + d\log(1/\eps) + \log(1/\alpha) \\ 
    &= \log\sqrt{1 + a^2 H_t} + d\log(1/\eps) + \log(1/\alpha). \notag
\end{align}
We work with \eqref{eq:cond-subG} instead of the more general definition in \eqref{eq:subG-condition} because we do not want the right hand side to depend on $\thetavec$. If it did then $H_t$ would be a function of $\thetavec$, and the integral on the left hand side of \eqref{eq:mixture_int} would become too complex to solve in closed-form.  
Continuing with the proof, since $H_t$ is not a function of $\thetavec$,  the above display rearranges to read 
\begin{align*}
    \int_{\sd} D_t^2(\thetavec) \rho_{\varthetavec}(\d\thetavec) \leq \frac{2(1 + a^2H_t)}{a^2} \left(d\log(1/\eps) + \log(\sqrt{1 + a^2H_t}/\alpha)\right).
\end{align*}
Taking square roots of both sides, Jensen's inequality implies that 
\begin{align*}
    \int_{\sd} D_t(\thetavec) \gamma_{\varthetavec}(\d\thetavec) \leq \left(\frac{2(1 + a^2H_t)}{a^2} \left(d\log(1/\eps) + \log(\sqrt{1 + a^2H_t}/\alpha)\right)\right)^{1/2}.
\end{align*}
Recalling the definition of $D_t(\thetavec)$ and 
integrating the left hand side as in the proof of Theorem~\ref{thm:sub-psi-css} we obtain that with probability $1-\alpha$, simultaneously for all $t\geq 1$: 
\begin{equation*}
    \left\|\sum_{i\leq t} (\Xvec_i-\muvec_i)\right\| \leq \frac{1}{1-\eps}\left(\frac{2(1 + a^2H_t)}{a^2} \left(d\log(1/\eps) + \log(\sqrt{1 + a^2H_t}/\alpha)\right)\right)^{1/2}.
\end{equation*}
Dividing both sides by $t$ completes the proof.

\subsection{Proofs for Section \ref{sec:heavy-tailed}}
\label{sec:proofs-heavy-tailed}

\paragraph{Proof of Lemma~\ref{lem:delyon_nsm}}
\citet[Equation (52)]{delyon2009exponential} demonstrates that for all $x\in\Re$, $\exp(x-x^2/6) \leq 1 + x + x^2/3$. If we take $x = \lambda_i X_i(\thetavec)$, then applying expectations (w.r.t.\ $P$) yields 
    \begin{align*}
        & \E_P\left[\exp\left\{\lambda_i X_i(\thetavec) - \frac{\lambda_i^2}{6}X_i^2(\thetavec)\right\}\bigg| \F_{i-1}\right]\\ 
        &\leq 1 + \lambda_i\E_P[X_i(\thetavec)|\F_{i-1}] + \frac{\lambda_i^2}{3}\E_P[X_i^2(\thetavec)|\F_{i-1}] \\
        &= 1 + \frac{\lambda_i^2}{3}\E_P[X_i^2(\thetavec)|\F_{i-1}] \leq \exp\left\{\frac{\lambda_i^2}{3}\E_P[X_i^2(\thetavec)|\F_{i-1}]\right\}.
    \end{align*}
%    Note that since $x\mapsto x^{2/p}$ is concave for $p\geq 2$, we have $\E[\norm{\Xvec_i}^2]\leq \E[\norm{\Xvec_i}^p]^{2/p}\leq v^2$. 
From here, note that 
\begin{align*}
    \E_P[X_i^2(\thetavec)|\F_{i-1}] &=   \E_P[ \thetavec^\top (\Xvec_i - \muvec ) (\Xvec_i - \muvec )^\top \thetavec   |\F_{i-1}] \\
    &=   \thetavec^\top\E_P[  (\Xvec_i - \muvec ) (\Xvec_i - \muvec )^\top  |\F_{i-1}] \thetavec  = \thetavec^\top \Sigmavec \thetavec. 
\end{align*}
Applying this to the display above and rearranging yields that 
\[\E_P\exp\left\{\lambda_i X_i(\thetavec) - \frac{\lambda_i^2}{6}X_i^2(\thetavec) - \frac{\lambda_i^2}{3}\la\thetavec, \Sigmavec \thetavec\ra\bigg|\F_{i-1}\right\}\leq 1,\]
    which in turn implies that $S(\thetavec)$ is a supermartingale.

\paragraph{Proof of Theorem~\ref{thm:second_moment_dim_free}}
Let $\rho_{\varthetavec}$ be a Gaussian with mean $\varthetavec$ and covariance $\beta^{-1}\bs{I}_d$. Apply Proposition~\ref{prop:pac-bayes-supermart} to the supermartingale in Lemma~\ref{lem:delyon_nsm}. We obtain that with probability $1-\alpha$, for all $t\geq 1$ and $\varthetavec\in\sd$, 
\begin{align*}
    \sum_{i\leq t}\lambda_i \int X_i(\thetavec)\rho_{\varthetavec}(\d\thetavec) \leq \sum_{i\leq t}\frac{\lambda_i^2}{6}\int X_i^2(\thetavec) + 2\la\thetavec, \Sigmavec_i\thetavec\ra \rho_{\varthetavec}(\d\thetavec) + \beta/2 + \log(1/\alpha).
\end{align*}
Now, let $\bs{M}_i$ be the matrix $(\Xvec_i - \muvec)(\Xvec_i-\muvec)^\top$ and write 
\begin{align*}
    \int X_i^2(\thetavec) \rho_{\varthetavec}(\d\thetavec) &= \int \thetavec^\top \bs{M}_i \thetavec \rho_{\varthetavec}(\d\thetavec) \\ 
    &= \varthetavec^\top \bs{M}_i \varthetavec + \beta^{-1}\Tr(\bs{M}_i) \\
    &\leq \norm{\bs{M}_i} + \beta^{-1}\Tr(\bs{M}_i) \\ 
    &= \norm{\Xvec_i - \muvec}^2(1 + \beta^{-1}) \\ 
    &\leq (\norm{\Xvec_i} + v)^2(1 + \beta^{-1}).
\end{align*}
Moreover, 
\begin{align*}
    \int \la\thetavec, \Sigmavec_i\thetavec\ra \rho_{\varthetavec}(\d\thetavec) &= \varthetavec^\top \Sigmavec_i \varthetavec + \beta^{-1}\Tr(\Sigmavec_i) \leq \norm{\Sigmavec_i} + \beta^{-1}\Tr(\Sigmavec_i).
\end{align*}
Putting this all together, we obtain that with probability $1-\alpha$, for all $\varthetavec\in\sd$ and all $t\geq 1$, 
\begin{align*}
    \sum_{i\leq t}\lambda_i X_i^2(\varthetavec) \leq (1 + \beta^{-1})\sum_{i\leq t}\frac{\lambda_i^2}{6}(\norm{\Xvec_i} + v)^{2} + \sum_{i\leq t} \frac{\lambda_i^2}{3}(\norm{\Sigmavec_i} + \beta^{-1}\Tr(\Sigmavec_i)) + \frac{\beta}{2} + \log(\frac{1}{\alpha}).
\end{align*}
Noticing that supremum over all $\thetavec\in\sd$ of the left hand side equals $\norm{\sum_{i\leq t}\lambda_i (\Xvec_i - \muvec)}$ and then dividing through by $\sum_{i\leq t}\lambda_i$ gives the result.

\paragraph{Proof of Theorem~\ref{thm:second_moment_symmetric}}
Suppose that $\Xvec_t$ is conditionally symmetric around the conditional mean $\muvec$ for all $t\geq 1$. This implies that  for all $\thetavec\in\rd$, $\la \thetavec, \Xvec_t - \muvec\ra \sim -\la \thetavec, \Xvec_t - \muvec\ra |\F_{t-1}$. \citet[Lemma 1]{de1999general} (see also \citealp[Lemma 3]{howard2020time}) shows that for conditionally symmetric random variables $(Y_t)_{t\geq 1}$ with mean 0, the process given by $\prod_{i\leq t} \exp\{ \lambda_i Y_i - \lambda_i^2 Y_i^2/2\}$ is a nonnegative supermartingale. Consider this process with $Y_i = \la \thetavec, \Xvec_i - \muvec\ra$, and applying Proposition~\ref{prop:pac-bayes-supermart} with prior $\rho_{\bs{0}}$ and posteriors $\rho_{\varthetavec}$, $\varthetavec\in\sd$, (with covariance $\beta^{-1}\bs{I}_d$ as usual) we obtain that with probability $1-\alpha$, simultaneously for all $t\geq 1$, 
\begin{align*}
    \int_{\rd}\sum_{i\leq t}\lambda_i \la \thetavec, \Xvec_i - \muvec\ra\rho_{\varthetavec}(\d\thetavec) &\leq \int_{\rd}\sum_{i\leq t} \frac{\lambda_i^2}{2} \la \thetavec, \Xvec_i - \muvec\ra ^2\rho_{\varthetavec}(\d\thetavec) + \frac{\beta}{2} + \log(1/\alpha).
    \end{align*}
Then, as in the proof of Theorem~\ref{thm:second_moment_dim_free} above, we note that 
\[\int \la \thetavec, \Xvec_i - \muvec\ra^{ \color{blue} 2} \rho_{\varthetavec}(\d\thetavec) \leq \left(1 + \frac{1}{\beta}\right) (\norm{\Xvec_i} + v)^2.\]
Therefore, we obtain that with probability $1-\alpha$, simultaneously for all $t\geq 1$, 
\begin{equation*}
    \sup_{\varthetavec\in\sd} \sum_{i\leq t} \la \varthetavec,\Xvec_i-\muvec\ra \leq \left(1 + \frac{1}{\beta}\right) \sum_{i\leq t}\frac{\lambda_i^2}{2} (\norm{\Xvec_i} + v)^2 + \frac{\beta}{2} + \log(1/\alpha). 
\end{equation*}
Taking $\beta=1$ completes the proof.

\paragraph{Proof of Lemma~\ref{lem:thres-mu}}
    First, let us observe the relationship 
    \begin{equation*}
        0\leq 1 - \frac{a\wedge 1}{a}\leq a,\quad \forall a>0. 
    \end{equation*}
This is easily seen by case analysis. Indeed, for $a\geq 1$, we have $(a\wedge 1)/a = 1/a$ and $1-1/a\leq 1\leq a$. For $a<1$, we have $1-(a\wedge 1)/a = 1 - 1=0\leq a$. 

Now, let 
\[\alpha(\Xvec) = \frac{\lambda \norm{\Xvec} \wedge 1}{\lambda \norm{\Xvec}},\]
and note that the above analysis demonstrates that 
\begin{equation}
\label{eq:abs_alpha_bound}
  |\alpha(\Xvec)-1| = 1 - \alpha(\Xvec) \leq \lambda\norm{\Xvec}.  
\end{equation}
Therefore, 
\begin{align*}
    \quad \la \varthetavec,\muvec_t - \muvec\ra 
    &= \la \varthetavec, \E[\alpha(\Xvec) \Xvec] - \E[\Xvec]\ra \\ 
    &= \E (\alpha(\Xvec)-1)\la \varthetavec, \Xvec\ra \\ 
    &\leq \E |\alpha(\Xvec)-1||\la \varthetavec, \Xvec\ra| & 
    %\text{by Jensen}
    \\ 
    &\leq \E \lambda \norm{\Xvec} \norm{\varthetavec}\norm{\Xvec} & \text{by \eqref{eq:abs_alpha_bound} and Cauchy-Schwarz} \\
    & = \lambda \E \norm{\Xvec}^2 & 
    \norm{\varthetavec} =1  \\ 
    &\leq \lambda v^2 & \text{by \edit{assumption}}.
\end{align*}
This proves the claim.

\paragraph{Proof of Theorem~\ref{thm:catoni-estimator}}

First let us state the PAC-Bayesian theorem upon which we rely. The following, due to \citet[Corollary 15]{chugg2023unified}, is a time-uniform extension of the bound by \citet[Equation (5.2.1)]{catoni2004statistical}. It is based on applying Proposition~\ref{prop:pac-bayes-supermart} to the supermartingale defined by $U_t(\thetavec) = \prod_{i\leq t}\exp\{\lambda_i f_i(\thetavec) - \log\E\exp(\lambda_i f_i(\thetavec))\}$. 

\begin{lemma}
\label{lem:pb-mgf}
Let $(\Xvec_t)_{t\geq 1}\sim P$ and 
    let $\{f_t:\cX\times \Theta\to\Re\}$ be a sequence of measurable functions.  Fix a prior $\nu$ over $\Theta$. Then, with probability $1-\alpha$ over $P$, for all $t\geq 1$ and all distributions $\rho$ over $\Theta$, 
    \begin{align*}
        \sum_{i\leq t} \int_{\Theta} f_i(\Xvec_i,\thetavec) \rho(\d\thetavec)
        \leq \sum_{i\leq t}\int_{\Theta} \log\E_{i-1}e^{ f_i(\Xvec,\thetavec)} \rho(\d\thetavec)+ \kl(\rho\|\nu) + \log\frac{1}{\alpha}.
    \end{align*}
\end{lemma}

As stated in Section~\ref{sec:heavy-tailed}, we apply Lemma \ref{lem:pb-mgf} with the functions $f_i(\Xvec_i, \thetavec) = \lambda_i\la \thetavec, \thres_i(\Xvec_i) - \muvec_i^\thres\ra$. 
Keeping in mind that $\E_{\thetavec\sim\rho_{\varthetavec}}\la \thetavec ,\thres_i(\Xvec_i)-\muvec_i^\thres\ra %\rho_{\varthetavec}
= \la \varthetavec, \thres_i(\Xvec_i) - \muvec_i^\thres\ra$, 
we obtain that with probability $1-\alpha$, simultaneously for all $t\geq 1$, 
\begin{align}
\label{eq:pb_mgf_applied}
    & \sup_{\varthetavec\in\sd}\sum_{i\leq t} \lambda_i\la\varthetavec, \thres_i(\Xvec_i) - \muvec_i^\thres \ra 
    \le 
    \sum_{i\leq t} \Expw_{\thetavec\sim\rho_{\varthetavec}}\log \E\left\{ \e^{\lambda_i \dotprod{\thetavec}{ \thres_i(\Xvec_i) - \muvec_i^\thres } } |\F_{i-1} \right\} + \frac{\beta}{2} + \log\frac{1}{\alpha}. 
\end{align}
Our second technical lemma, following \citet{catoni2018dimension}, helps bound the right hand side of the above. 
\begin{lemma}
\label{lem:gaussian_mgf_bound}
For all $t\geq 1$, 
    \begin{align*}
& \Expw_{\thetavec\sim\rho_{\varthetavec}}\log \Expw_{\Xvec\sim P}\left\{ \e^{ \lambda_i\dotprod{\thetavec}{ \thres_i(\Xvec) - \muvec_i^\thres }  } |\F_{t-1}\right\}  
\leq \frac{1}{4}v^2\lambda_i^2 e^{2/\beta + 2}.
    \end{align*}
\end{lemma}

\begin{proof}
To begin, notice that Jensen's inequality gives
\begin{align*}
    &\quad \int  \log \Expw_{\Xvec\sim P}\left\{ \e^{\lambda_i  \dotprod{\thetavec}{ \thres_i(\Xvec) - \muvec_i^\thres }  } \right\}  \rho_{\varthetavec}(\d \thetavec)
    \\ &\le  \log \int \Expw_{\Xvec\sim P}\left\{ \e^{\lambda_i \dotprod{\thetavec}{ \thres_i(\Xvec) - \muvec_i^\thres }  } \right\}  \rho_{\varthetavec}(\d \thetavec)
    \\ &=  \log \Expw_{\Xvec\sim P}\left\{ \int \e^{  \lambda_i\dotprod{\thetavec}{ \thres_i(\Xvec) - \muvec_i^\thres }  } \rho_{\varthetavec}(\d \thetavec) \right\}
    \\ &=  \log \E \exp\left(\lambda_i\dotprod{\varthetavec}{ \thres_i(\Xvec) - \muvec_i^\thres } + \frac{\lambda_i^2}{2\beta} \| \thres_i(\Xvec) - \muvec_i^\thres \|^2 \right), 
\end{align*}
where the final line uses the usual closed-form expression of the multivariate Gaussian MGF. 
Define the functions on $\mathbb R$
\[g_1(x) := \frac{1}{x}(e^x-1),\]
and 
\[g_2(x) := \frac{2}{x^2}(e^x-x-1),\]
(with $g_1(0) = g_2(0) = 1$ by continuous extension).
Both $g_1$ and $g_2$ are increasing. Notice that 
\begin{equation}
\label{eq:ex+y}
  e^{x+y} = 1 + x+ \frac{x^2}{2}g_2(x) + g_1(y)ye^x.  
\end{equation}
Consider setting $x$ and $y$ to be the two terms in the CGF above, i.e., 
\[x = \lambda_i\la \varthetavec, \thres_i(\Xvec) - \muvec_i^\thres\ra,\]
\[y = \frac{\lambda_i^2}{2\beta} \| \thres_i(\Xvec) - \muvec_i^\thres \|^2,\]
where we recall that $\varthetavec\in\sd$. 
Before applying \eqref{eq:ex+y} we would like to develop upper bounds on $x$ and $y$. 
Observe that 
\begin{equation*}
    \norm{\thres_i(\Xvec)} = \frac{\lambda_i \norm{\Xvec} \wedge 1}{\lambda_i} \leq \frac{1}{\lambda_i}, 
\end{equation*}
and consequently, 
\[\norm{\muvec_i^\thres} = \norm{\E\thres_i(\Xvec)} \leq \E\norm{\thres_i(\Xvec)} \leq \frac{1}{\lambda_i},\]
by Jensen's inequality. Therefore, by Cauchy-Schwarz and the triangle inequality,  
\[x \leq \lambda_i\norm{\varthetavec}\norm{\thres_i(\Xvec) - \muvec_i^\thres} \leq \lambda_i(\norm{\thres_i(\Xvec)} + \norm{\muvec_i^\thres})\leq 2.\]
Via similar reasoning, we can bound $y$ as 
\begin{align*}
    y \leq \frac{\lambda_i^2}{2\beta}(\norm{\thres_i(\Xvec)}+\norm{\muvec_i^\thres})^2 \leq \frac{2}{\beta}. 
\end{align*}
Finally, substituting in these values of $x$ and $y$ to \eqref{eq:ex+y}, taking expectations, and using the fact that $g_1$ and $g_2$ are increasing gives 
\begin{align*}
  & \quad \E \left[\exp\left(\dotprod{\varthetavec}{ \thres_i(\Xvec) - \muvec_i^\thres } + \frac{1}{2\beta} \| \thres_i(\Xvec) - \muvec_i^\thres \|^2 \right) \bigg|\F_{i-1}\right]  \\ 
  &\leq 1 + \lambda_i\E [\la \varthetavec, \thres_i(\Xvec) - \muvec_i^\thres\ra |\F_{i-1}]  
  + g_2\left(2\right)\frac{\lambda_i^2}{2}\E[\la \varthetavec, \thres_i(\Xvec) - \muvec_i^\thres\ra^2 |\F_{i-1}]
  \\
  &\qquad + g_1\left(\frac{2}{\beta}\right )\frac{\lambda_i^2 e^{2}}{2\beta}\E[\norm{\thres_i(\Xvec) - \muvec_i^\thres}^2|\F_{i-1}] \\ 
  &\leq 1 + g_2\left(2\right)\frac{\lambda_i^2}{2}\E[\norm{\thres_i(\Xvec) - \muvec_i^\thres}^2 |\F_{i-1}]
  + g_1\left(\frac{2}{\beta}\right )\frac{\lambda_i^2 e^{2}}{2\beta}\E[\norm{\thres_i(\Xvec) - \muvec_i^\thres}^2|\F_{i-1}],  
\end{align*}
where we've used that $\E[\thres_i(\Xvec) - \muvec_i^\thres |\F_{i-1}]= \bs{0}$. 
Denote by $\Xvec'$ an iid copy of $\Xvec$. Using the notation $\E_{i-1}[\cdot] = \E[\cdot|\F_{i-1}]$, we can bound the norm as follows: 
\begin{align*}
     \E_{i-1}[\norm{\thres_i(\Xvec) - \muvec_i^\thres}^2 ]
    & = \E_{i-1} [\norm{\thres_i(\Xvec) }^2 ]-  \norm{\muvec_i^\thres}^2
    \\
     & = \frac{1}{2} \E_{i-1} \left[ \norm{\thres_i(\Xvec) }^2 - 2\la \thres_i(\Xvec), \muvec_i^\thres  \ra + \E_{i-1} [\norm{\thres_i(\Xvec) }^2] \right]
    \\
     & =\frac{1}{2} \Expw_{\Xvec} \bigg[ \norm{\thres_i(\Xvec) }^2 - 2\left\la \thres_i(\Xvec), \Expw_{\Xvec'} [\thres_i(\Xvec')  |\F_{i-1}]\right\ra 
      \\
     &\qquad +  \Expw_{\Xvec'} \left[\norm{\thres_i(\Xvec') }^2    |\F_{i-1}\right]\bigg|\F_{i-1}\bigg] \\
    & =  \frac{1}{2} \Expw_{\Xvec, \Xvec'} \left[\norm{\thres_i(\Xvec) - \thres_i(\Xvec')}^2|\F_{i-1}\right]
    \\
    & \le \frac{1}{2} \Expw_{\Xvec, \Xvec'} \left[\norm{\Xvec - \Xvec'}^2|\F_{i-1}\right] \\
    &= \E_{i-1} \norm{ \Xvec - \E_{i-1} \Xvec  }^2 \le \E_{i-1} \norm{\Xvec}^2
    \le  v^2,
\end{align*}
where the first inequality uses the basic fact from convex analysis that, for a closed a convex set $D\subset \Re^n$ and any $\vb{x},\vb{y}\in \Re^n$, 
\begin{equation*}
    \norm{\Pi_D (\vb{x}) - \Pi_D(\vb{y})} \leq \norm{\vb{x}-\vb{y}},
\end{equation*}
where $\Pi_D$ is the projection onto $D$. 
Putting everything together thus far, we have shown that 
\begin{align*}
    &\int  \log \Expw_{\Xvec\sim P}\left\{ \e^{  \lambda_i \dotprod{\thetavec}{ \thres_i(\Xvec) - \muvec_i^\thres }  }\bigg| \F_{i-1} \right\}  \rho_{\varthetavec}(\d \thetavec)   \\
    &\leq \log\left\{1 + g_2\left(2\right)\frac{\lambda_i^2 v^2}{2} + g_1\left(\frac{2}{\beta}\right)\frac{\lambda_i^2e^{2}}{2\beta}v^2\right\} \\ 
    &\leq g_2\left(2\right)\frac{\lambda_i^2 v^2}{2} + g_1\left(\frac{2}{\beta}\right)\frac{\lambda_i^2e^{2}}{2\beta}v^2 \\ 
    &= v^2\frac{\lambda_i^2}{4}\left\{e^{2/\beta + 2} -3\right\} \\ 
    &\leq \frac{1}{4}v^2\lambda_i^2 e^{2/\beta + 2},
\end{align*}
which is the desired inequality. 
\end{proof}

To obtain the main result, we apply Lemmas~\ref{lem:gaussian_mgf_bound} and \ref{lem:thres-mu}. 
For all $\varthetavec\in\sd$, 
\begin{align*}
    \sum_{i\leq t}\lambda_i \la \varthetavec, \thres_i(\Xvec_i) - \muvec\ra  
    &= \sum_{i\leq t} \lambda_i(\la \varthetavec, \thres_i(\Xvec_i) - \muvec_i^\thres\ra + \la\varthetavec, \muvec_i^\thres - \muvec\ra ) \\
    &\leq \sum_{i\leq t} \lambda_i\la \varthetavec, \thres_i(\Xvec_i)-\muvec_i^\thres\ra + v^2\sum_{i\leq t}\lambda_i^2  \\
    &\leq \frac{v^2e^{\frac{2}{\beta}+2}}{4}\sum_{i\leq t}  \lambda_i^2 + v^2\sum_{i\leq t}\lambda_i^2+  \frac{\beta}{2} + \log(\frac{1}{\alpha}) \\ 
    &\leq  v^2\left(2e^{\frac{2}{\beta}+2}+1\right)\sum_{i\leq t}\lambda_i^2 + \frac{\beta}{2} + \log(\frac{1}{\alpha}),
\end{align*}
Noting that  
\[\sup_{\varthetavec\in\sd} \sum_{i\leq t}\lambda_i \la \varthetavec, \thres_i(\Xvec_i) - \muvec\ra = \norm{\sum_{i\leq t} \lambda_i (\thres_i(\Xvec_i) - \muvec)}, \]
and dividing through by $\sum_{i\le t}\lambda_i$ gives the desired result.

\section{Simulation Details}
\label{sec:experiments}
Code can be found at \href{https://github.com/bchugg/confidence-spheres}{https://github.com/bchugg/confidence-spheres}.
% \ifthenelse{\boolean{arxiv}}{
% Code can be found at \href{https://github.com/bchugg/confidence-spheres}{https://github.com/bchugg/confidence-spheres}.  
% }{Code will be referenced upon publication. }

Let us first describe the result of \citet{duchi2024information}, enabling us to transform any fixed-time estimator into a sequential estimator which loses only an iterated-logarithm factor. 
Let $\widehat{\mu}_n = \widehat{\mu}(X_1,\dots,X_n)$ be an estimator of the mean $\mu$ which satisfies the following deviation inequality for iid observations $X_1,\dots,X_n$: For all $n\geq 1$, 
\begin{equation}
    \P( \| \widehat{\mu}_n - \mu \| \geq F(\log(1/\alpha), n)) \leq \alpha,
\end{equation}
for some function $F:\Re \times \mathbb{N}\to (0,\infty)$. 
Then 
\begin{equation}
    \Pr(\exists k: \|\widehat{\mu}_{2^k}  - \mu\| \geq F(\log(\pi^2k^2 / 6\alpha, 2^k))) \leq \alpha,
\end{equation}
implying that the estimator defined as
\begin{equation*}
    \widehat{\mu}^{DH}_t = \begin{cases}
        \widehat{\mu}_t & \text{if }t=2^k,\\
        \widehat{\mu}_{t-1},&\text{otherwise}, 
    \end{cases}
\end{equation*}
satisfies a time-uniform bound which suffers only an iterated logarithm penalty (plus some constants) over the original. 
We call this a ``doubling strategy,'' as the estimator is updated every $2^k$ timesteps, $k\in\mathbb{N}$. 
In order to plot the boundary, at every time step $t$, if $t=2^k$ for some $k$, we plot $F(\log(\pi^2k^2/6\alpha, 2^k)$, and otherwise we plot the previous value of the boundary. 

Let us now turn to the experimental details. 

\paragraph{Section~\ref{sec:sub-gaussian}.} 
The bound in Theorem~\ref{thm:sub-gaussian-dim-free} is implemented with the parameters 
\[\beta = \sqrt{\frac{2\Tr(\Sigma)\log(1/\alpha)}{\|\Sigma\|}}, \quad \lambda_t = \sqrt{\frac{\beta + 2\log(1/\alpha)}{(\|\Sigma\| + \Tr(\Sigma)/\beta)t\log(t + 10e^4)}}.\]
Theorem~\ref{thm:lil-subG} is implemented directly as stated. 
We compute the bound of \citet{hsu2012tail} as in~\eqref{eq:hsu}, making it time-uniform via one of two methods: Either a union bound (taking $\alpha_t = \alpha/(t^2+t)$ at each timestep $t$), or by the doubling method of \citet{duchi2024information} above. 
In the left hand side of Figure~\ref{fig:subG} we use $\Tr(\Sigma^2) = \Tr(\Sigma) = \|\Sigma\| =1$ (eg distributions with identity covariance matrix). In the figure on the right hand side, we fix $\|\Sigma\|=1$, $\Tr(\Sigma^2)=10$ and vary $\Tr(\Sigma)$.

\paragraph{Section~\ref{sec:catoni}.}
For Theorem~\ref{thm:catoni-estimator} we use $\beta=4$ and 
\[\lambda_t = \sqrt{\frac{\log(1/\alpha)}{20 v^2 t \log(t + 10e^4)}}.\]
We use the following bound on the tournament median-of-means estimator: With probability $1-\delta$, 
\begin{equation}
    \| \widehat{\muvec}_n - \mu\| \leq \max\left\{ 240\sqrt{\frac{\lambda_{\max} \log(2/\delta)}{n}}, 960\sqrt{\frac{\Tr(\Sigmavec)}{n}}\right\}
\end{equation}
For the geometric median-of-means estimator, the tightest constants we could find come from the survey of \citet{lugosi2019mean}, which gives the following bound:  With probability $1-\delta$, 
\begin{equation}
    \| \widehat{\muvec}_n - \mu\| \leq 4\sqrt{\frac{\Tr(\Sigmavec)(8\log(1/\delta) + 1)}{n}}.
\end{equation}
For the GMoM estimator we fix $\|\Sigma\|=1$ and set $\Tr(\Sigma)=5$ and $v^2 = 5$ for all estimators. We set $\Tr(\Sigma) = v^2$ for the reasons discussed in the caption, namely we are interested in how the bounds behave as functions of their multipliers on the know variance/second moment bound.

\section{Empirical-Bernstein bound}
\label{sec:bounded}

Suppose there exists some $B\in\Re_+$ such that  $\sup_{\Xvec \in \cX}\norm{\bs{X}}\leq B$. In this section we present an empirical-Bernstein bound, meaning a bound whose width adapts to the observations themselves, not on any \emph{a priori} upper bound thereof like a traditional Hoeffding or Bernstein bound. The reason we present the bound in the appendix is that the width of the CSS is dimension-dependent. This is suboptimal as there exists a dimension-independent Bernstein bound for bounded random vectors~\citep{gross2011recovering,kohler2017sub}. Indeed, very recently, \citet{martinez2024empirical} gave a dimension-free empirical Bernstein bound in smooth Banach spaces (using different techniques). Whether a variational approach exists to providing a dimension-free empirical Bernstein bound remains an open question. 

For any $t$, let $\bar{\muvec}_t:= \frac{1}{t}\sum_{i\leq t}\Xvec_i$ be the empirical mean at time $t$, and define the function $\psi_E(\lambda):= |\log(1-\lambda) + \lambda|$ for $\lambda\in[0,1)$.  The following supermartingale is a multivariate analogue of that used by \citet[Eqn.\ (13)]{waudby2023estimating} to construct CSs, which in turn is based off of work by \citet{howard2020time,howard2021time} that generalized a lemma by \citet{fan2015exponential}.  The proof is in Appendix~\ref{sec:proofs}. 

\begin{lemma}
\label{lem:fan-nsm}
Suppose $(\Xvec_t)_{t\geq 1}$ satisfies $\sup_t\norm{\Xvec_t}\leq B$ and Assumption~\ref{assump:ccm}. 
Let $(\lambda_t)_{t\geq 1}$ be a predictable sequence in $[0,1)$. 
    For each $\thetavec\in \sd$, the process $N(\thetavec) \equiv (N_t(\thetavec))_{t\geq 1}$ is a nonnegative $P$-supermartingale, where
    \begin{equation*}
        N_t(\thetavec) = \prod_{i\leq t} \exp\left\{\frac{\lambda_i}{2B}\la\thetavec, \Xvec_i-\muvec\ra - \frac{\psi_E(\lambda_i)}{(2B)^2}\la\thetavec, \Xvec_i - \bar{\muvec}_{i-1}\ra^2\right\}.
    \end{equation*}
\end{lemma}
\begin{proof}
   \citet[Equation (4.11)]{fan2015exponential} demonstrates that for all $\upsilon\geq -1$ and $\lambda\in[0,1)$, 
\begin{equation}
\label{eq:fan}
  \exp\{\lambda\upsilon - \psi_E(\lambda)\upsilon^2\}\leq 1 + \lambda\upsilon,  
\end{equation}
where we recall that $\psi_E(\lambda) = |\log(1-\lambda) + \lambda| = -\lambda - \log(1-\lambda)$. In order to demonstrate that $(N_t(\thetavec))$ is a $P$-NSM, it suffices to demonstrate that 
\begin{equation*}
  \E_P\left[\exp\left\{\frac{\lambda_t}{2B} \la \thetavec, \Xvec_t-\muvec\ra - \frac{\psi_E(\lambda_t)}{(2B)^2}\la \thetavec, \Xvec_t-\bar{\muvec}_{t-1}\ra^2 \right\}\bigg|\F_{t-1}\right]\leq 1,  
\end{equation*}
for all $t\geq 1$. Inspired by a trick of \citet[Section A.8]{howard2021time}, set 
\[\bs{Y}_t := \frac{\Xvec_t - \muvec}{2B}, \quad \bs{\delta}_t = \frac{\bar{\muvec}_t - \muvec}{2B}, \]
and observe that $\bs{Y}_t - \bs{\delta}_{t-1} = \frac{1}{2B}(\bs{X}_t - \bar{\muvec}_{t-1})$. 
Then, 
\begin{align*}
    &\quad \exp\left\{\frac{\lambda_t}{2B} \la \thetavec, \Xvec_t-\muvec\ra - \frac{\psi_E(\lambda_t)}{(2B)^2}\la \thetavec, \Xvec_t-\bar{\muvec}_{t-1}\ra^2 \right\} \\ 
    & = \exp\left\{\lambda_t \la \thetavec, \bs{Y}_t \ra - \psi_E(\lambda_t)\la \thetavec, \bs{Y}_t - \bs{\delta}_{t-1}\ra^2 \right\} \\ 
    &= \exp\left\{\lambda_t \la \thetavec, \bs{Y}_t - \bs{\delta}_{t-1}\ra - \psi_E(\lambda_t)\la \thetavec, \bs{Y}_t - \bs{\delta}_{t-1}\ra^2 \right\} \exp(\lambda_t \la\thetavec, \bs{\delta}_{t-1}\ra). 
\end{align*}
Since $\norm{\thetavec}=1$ we have, by construction,   
\begin{align*}
    |\la\thetavec, \bs{Y}_t - \bs{\delta}_{t-1}\ra| &\leq \frac{1}{2B}(\norm{\bs{X}_t} + \norm{\bar{\muvec}_{t-1}}) \leq 1, 
\end{align*}
since $\bar{\muvec}_{t-1}$ is an average of vectors each with norm at most $B$. Therefore, we may apply \eqref{eq:fan} with $\upsilon = \la \thetavec, \bs{Y}_t - \bs{\delta}_{t-1}\ra$, which, paired with the display above yields 
\begin{align*}
    & \exp\left\{\frac{\lambda_t}{2B} \la \thetavec, \Xvec_t-\muvec\ra - \frac{\psi_E(\lambda_t)}{(2B)^2}\la \thetavec, \Xvec_t-\bar{\muvec}_{t-1}\ra^2 \right\}  
    \leq ( 1 + \lambda_t \la \thetavec, \bs{Y}_t - \bs{\delta}_{t-1}\ra) \exp(\lambda_t\la\thetavec, \bs{\delta}_{t-1}\ra). 
\end{align*}
Taking expectations and noticing that $\E_P[\bs{Y}_t|\F_{t-1}] = 0$ gives 
\begin{align*}
 &\E_P \left[\exp\left\{\frac{\lambda_t}{2B} \la \thetavec, \Xvec_t-\muvec\ra - \frac{\psi_E(\lambda_t)}{(2B)^2}\la \thetavec, \Xvec_t-\bar{\muvec}_{t-1}\ra^2 \right\}\bigg|\F_{t-1}\right]  \\
    &\leq ( 1 - \lambda_t \la \thetavec,  \bs{\delta}_{t-1}\ra) \exp(\lambda_t \la \thetavec, \bs{\delta}_{t-1}\ra)
    \leq 1, 
\end{align*}
where the final inequality uses the fact that $1+x\leq e^{x}$ for all $x\in\Re$.  This completes the proof. 
\end{proof}

The ``$- \bar{\muvec}_{i-1}$'' can be replaced by any predictable estimate, and is a particularly important ingredient which even in the scalar setting does not immediately follow from~\cite{fan2015exponential}, but it critically changes the bound's dependence from $\E[\norm{\Xvec}^2]$ to $\E[\norm{\Xvec - \muvec}^2]$.
Applying Proposition~\ref{prop:pac-bayes-supermart} to  the supermartingale defined above and using uniform distributions as our priors and posteriors, we obtain  the following. 

\begin{theorem}
\label{thm:cs_bounded}
Suppose $(\Xvec_t)_{t\geq 1} \sim P$ for any $P$ obeying $\sup_{\Xvec\sim P}\norm{\Xvec}\leq 1/2$ and Assumption~\ref{assump:ccm}. Let $(\lambda_t)_{t\geq 1}$ be a predictable sequence in $(0,1)$ and fix any $0<\eps<1$.   Then, for all $\alpha\in(0,1)$, with probability at least $1-\alpha$, simultaneously for all $t\geq 1$, 
    \begin{align}
     \norm{\frac{\sum_{i\leq t} \lambda_i\Xvec_i}{\sum_{i\leq t}\lambda_i}-\muvec} & \leq  \frac{\sum_{i\leq t} \psi_E(\lambda_i)\norm{\Xvec_i - \bar{\muvec}_{i-1}}^2 + d\log(1/\eps) + \log\frac{1}{\alpha}}{(1-\eps) \sum_{i\leq t}\lambda_i}. 
     \label{eq:cs_bounded}
\end{align}
\end{theorem}
\begin{proof}
As in Section~\ref{sec:sub-psi}, let $\rho_\varthetavec$ be a uniform distribution \edit{over $\dball$} with radius $\eps$ centered at $\varthetavec\in (1-\eps)\sd$ and let $\nu$ be the uniform distribution over $\sd$. (\edit{We note that we can consider uniform distributions over the ball because $\psi_E$ is super-Gaussian; see the discussion in Section~\ref{sec:sub-psi} and Lemma~\ref{lem:sub-psi-over-ball}.})
Applying Proposition~\ref{prop:pac-bayes-supermart} to $N_t(\thetavec)$ from Lemma~\ref{lem:fan-nsm} gives that with probability $1-\alpha$, for all $t\geq 1$ and all $\varthetavec\in (1-\eps)\sd$, 
\begin{align*}
    \sum_{i\leq t} \lambda_i \la \vartheta, X_i-\mu\ra &= \int_{\sd} \sum_{i\leq t}\lambda_i\la\thetavec, \Xvec_i-\muvec\ra 
    \rho_{\varthetavec}(\d\thetavec) \\
    &\leq \int_{\sd}\sum_{i\leq t}\psi_E(\lambda_i)\la\thetavec, \Xvec_i - \bar{\muvec}_{i-1}\ra^2\rho_{\varthetavec}(\d\thetavec) + \kl(\rho_{\varthetavec}\|\nu) + \log(1/\alpha) \\ 
    &\leq \int_{\sd}\sum_{i\leq t}\psi_E(\lambda_i)\norm{\Xvec_i - \bar{\muvec}_{i-1}}^2\rho_{\varthetavec}(\d\thetavec) + d\log(1/\eps) + \log(1/\alpha)\\ 
    &= \sum_{i\leq t}\psi_E(\lambda_i)\norm{\Xvec_i - \bar{\muvec}_{i-1}}^2 + d\log(1/\eps) + \log(1/\alpha). 
\end{align*}
%Integrating the left hand side above gives 
%\[\int_{\sd} \sum_{i\leq t}\frac{\lambda_i}{2B}\la\thetavec, \Xvec_i-\muvec\ra \gamma_{\varthetavec}(\d\thetavec) = A_d(\kappa) \sum_{i\leq t} \frac{\lambda_i}{2B}\la \varthetavec, \Xvec_i-\muvec\ra.\]
That is, with probability $1-\alpha$, for all $t\geq 1$, 
\begin{equation*}
    \sup_{\varthetavec\in(1-\eps)\sd} \bigg\la \varthetavec, \sum_{i\leq t}(\Xvec_i-\muvec)\bigg\ra \leq \sum_{i\leq t}\psi_E(\lambda_i)\norm{\Xvec_i - \bar{\muvec}_{i-1}}^2 + d\log(1/\eps) + \log(1/\alpha).
\end{equation*}
The left hand side is equal to $(1-\eps)\norm{\sum_{i\leq t}\lambda_i (\Xvec_i -\muvec)}$. Dividing by $\sum_i\lambda_i$ results in the bound: 
\begin{equation*}
    \norm{\frac{\sum_{i\leq t}\lambda_i\Xvec_i}{\sum_{i\leq t}\lambda_i} - \muvec} \leq \frac{\sum_{i\leq t}\psi_E(\lambda_i)\norm{\Xvec_i - \bar{\muvec}_{i-1}}^2 + d\log(1/\eps) + \log(1/\alpha)}{(1-\eps)\sum_{i\leq t}\lambda_i}, 
\end{equation*}
which is the desired result. 
\end{proof}

Theorem~\ref{thm:cs_bounded} immediately yields empirical-Bernstein CIs when instantiated at a fixed time $n$. For intuition, note that $\psi_E(\lambda)\asymp \lambda^2/2$ for small $\lambda$, so for $\lambda_i \propto 1/\sqrt{n}$ for all $i$, the denominator is $\sqrt n$ while the numerator's first term does not depend on $n$. More interesting though is the dependence on the variance. Below we analyze the asymptotic width of the CI under iid data when taking $\lambda_i = \sqrt{\nicefrac{2d\log(1/\eps) + 2\log(1/\alpha)}{\widehat{\sigma}_{i-1}^2 n}}$, where $\widehat{\sigma}_n^2 = \frac{1}{n}\sum_{i\leq n}\norm{\Xvec_i - \bar{\muvec}_{i}}^2$ is an empirical estimate of the variance $\sigma^2 = \Var(\Xvec) = \E[\norm{\Xvec - \muvec}^2]$. The width $W_n$ (the RHS of \eqref{eq:cs_bounded}) obeys 
\begin{equation}
    \sqrt{n} W_n \xrightarrow{a.s.} \frac{\sigma}{1-\eps}\sqrt{2d\log(1/\eps) + 2\log(1/\alpha)}.
\end{equation}
Therefore, the width  of the bound scales with the true unknown variance. 
This can be seen as a generalization of known results in $d=1$ to multivariate settings. Indeed, in the scalar setting, the empirical-Bernstein bound of \citet[Theorem 2]{waudby2023estimating}
gives an asymptotic width of $\sigma\sqrt{2\log(2/\alpha)}$, and that of \citet{maurer2009empirical}  scales as $\sigma\sqrt{2\log(4/\alpha)}$. 
We might also compare our result to \emph{oracle} Bernstein bounds in the multivariate setting, which assume knowledge of the variance.  
\citet{kohler2017sub}, based on previous work by \citet{gross2011recovering}, show that if $(\Xvec_t)_{t=1}^n$ are iid, then with probability $1-\alpha$, $\norm{\frac{1}{n}\sum_{i\leq n} \Xvec_i-\muvec} \leq \sigma \sqrt{\nicefrac{8(\log(1/\alpha) + 1/4)}{n}}$. Note that this a bound has no dimension-dependence.

In the sequential setting, applying this result with $\lambda_t \asymp \sqrt{\nicefrac{d + \log(1/\alpha)}{t\log t}}$ results in a width scaling as 
$\widetilde{O}(B\sqrt{d + \log(1/\alpha)\log(t)/t})$,
where $\widetilde{O}$ hides iterated logarithm factors. Combined with the insights in the fixed-time setting explored above, we suggest taking $\lambda_t \asymp \sqrt{\nicefrac{d + \log(1/\alpha)}{\widehat{\sigma}_{t-1}^2 t\log t}}$. The extra $\log(t)$ in the denominator is required to ensure the rate is $\sqrt{\log(t)/t}$. We emphasize that our bounds hold for all predictable sequences $(\lambda_t)$ in $(0,1)$, but the precise selection matters for the asymptotic width and the empirical performance.

\begin{figure}[t]
    \centering
    \includegraphics[scale=0.45]{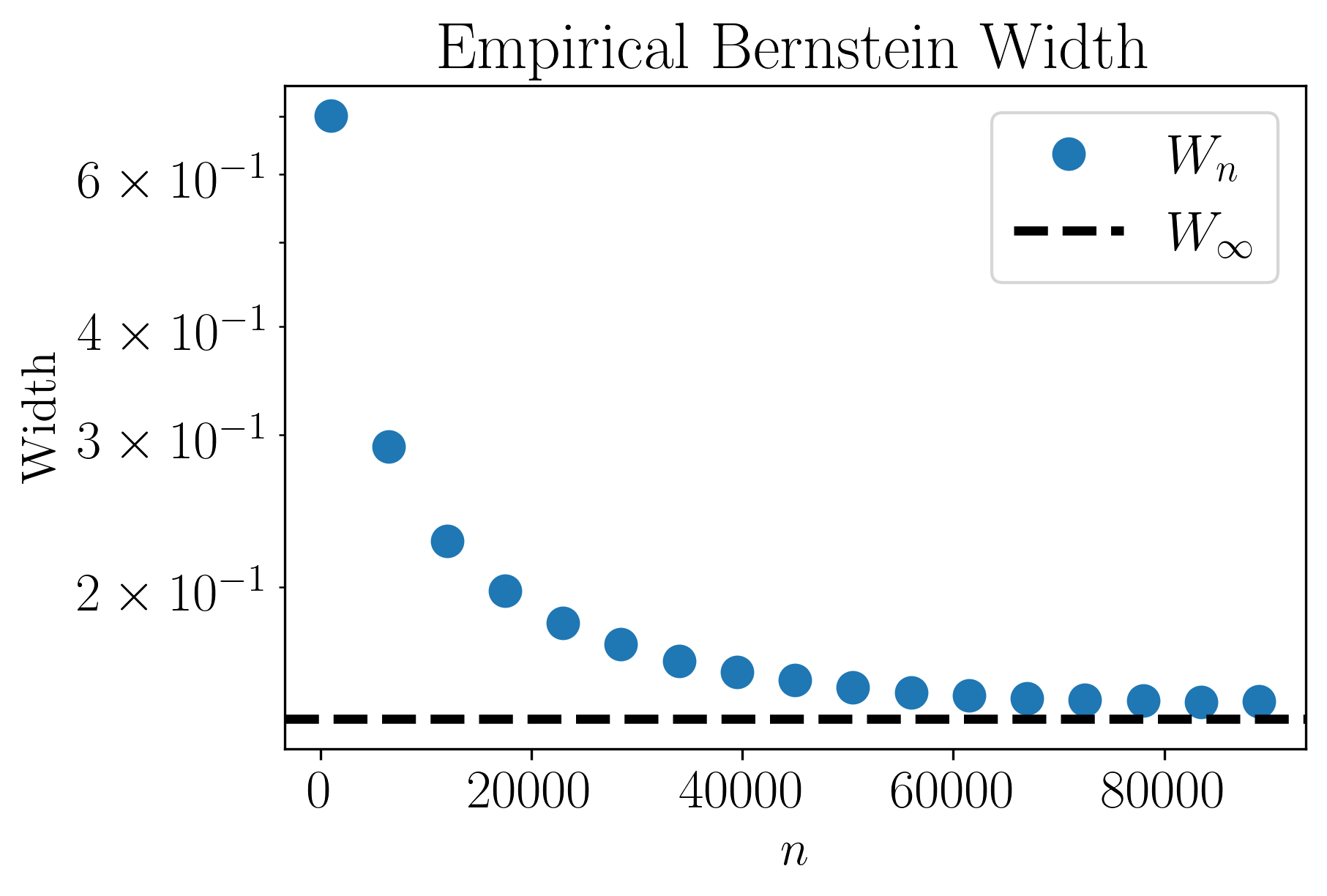}
    \includegraphics[scale=0.45]{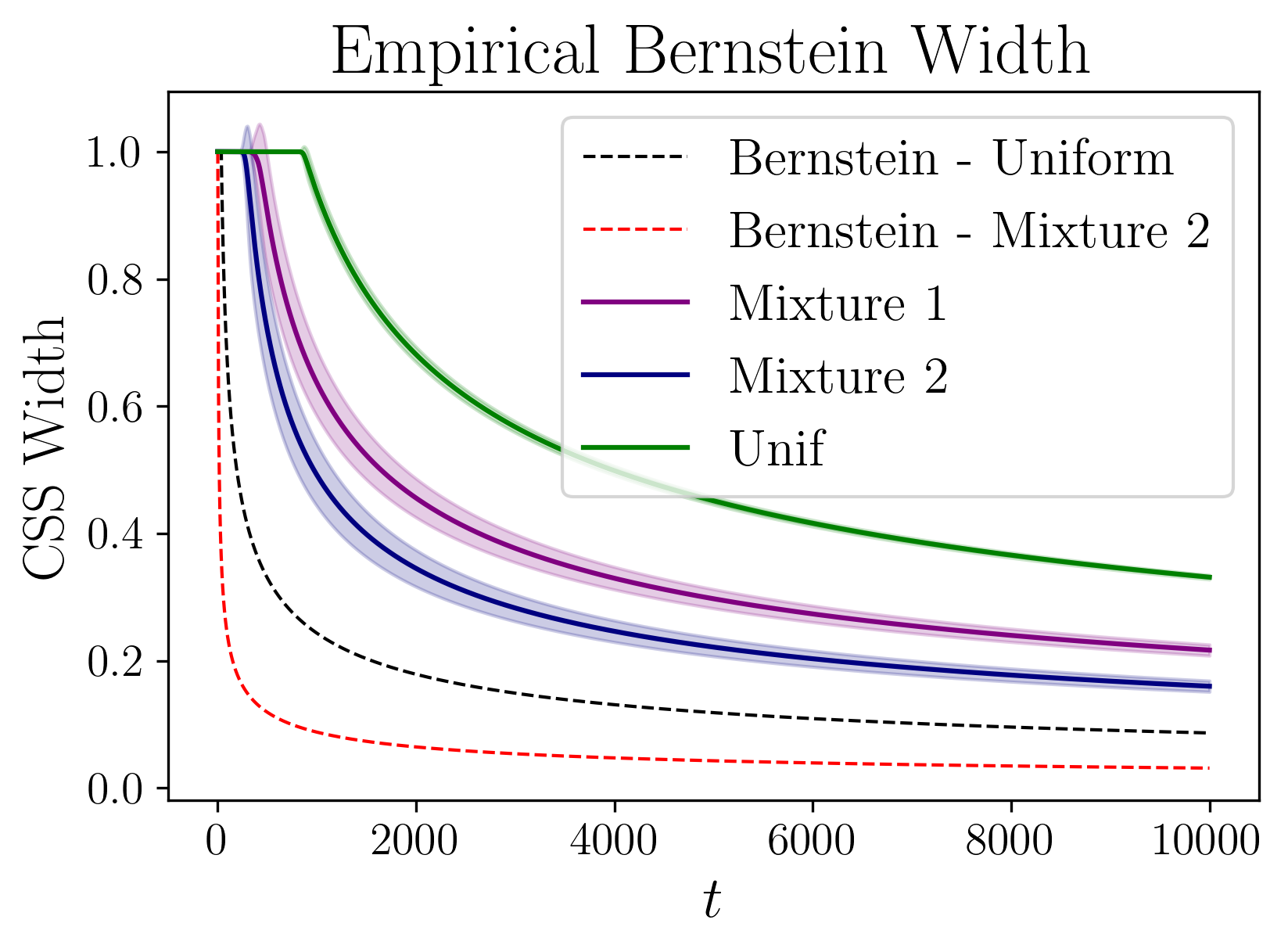}
    \caption{{\bf Left:} The width of our empirical Bernstein CI as $n\to\infty$, which approaches its  asymptotic width $W_\infty$. 
We use $\alpha=0.05$, $d=2$, and random vectors comprised of two $\text{Beta(10,10)}$ distributed random variables.  \textbf{Right:} Performance of our empirical Bernstein bound compared to the multivariate (non-empirical) Bernstein bound baseline, with oracle access to the true variance. Shaded areas provide the standard deviation across 100 trials. The distributions are mixtures of betas and binomials. Mixture 2 has the lowest variance. As the variance decreases our empirical bounds get tighter and approach the tighest known oracle bounds (black and red dotted lines)~\citep{gross2011recovering,kohler2017sub}.}     \label{fig:emp-bernstein-beta}
\end{figure}

\subsection{Asymptotic width}
\label{sec:width-bernstein}

Here we study the asymptotic width of our empirical Bernstein confidence intervals and demonstrate that they scale with the true variance over time.

We assume the data are iid.
Let $\sigma^2 = \Var(\Xvec) = \E[\norm{\Xvec - \muvec}^2]$. 
Fix a sample size $n$, 
let $\widehat{\sigma}_n^2 = \frac{1}{n}\sum_{i=1}^n \norm{\Xvec_i-\bar{\muvec}_{i-1}}^2$, and 
consider 
\begin{equation*}
    \lambda_t = \sqrt{\frac{c(d\log(1/\eps) + \log(1/\alpha))}{\widehat{\sigma}_{t-1}^2 n}}, \quad \forall t\geq 1,
\end{equation*}
for some constant $c$. Here we define $\widehat{\sigma}_0^2=1$. 
Our goal is to show that there exists a $c$ such that $\sqrt{n}W_n$, where $W_n$ is the width of the empirical-Bernstein interval in Theorem~\ref{thm:cs_bounded}, converges almost surely to the  quantity $\frac{\sigma}{1-\eps}\sqrt{2d\log(1/\eps) + 2\log(1/\alpha)}$. 
Our proof follows similar steps to \citet[Appendix E.2]{waudby2023estimating} who prove the result in the scalar setting. Most of the relevant mechanics still go through, however. 

We build up to the result via a sequence of lemmas. 

\begin{lemma}
\label{lem:sigmahat-to-sigma}
    $\widehat{\sigma}_n^2$ converges to $\sigma^2$ almost surely. 
\end{lemma}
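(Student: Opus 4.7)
The plan is to decompose $\|\bs{X}_i - \bar{\bs{\mu}}_{i-1}\|^2$ into a main term plus vanishing residuals, and then apply the strong law of large numbers (SLLN) to the main term together with a Cesàro-type argument to the residuals.

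Concretely, by adding and subtracting $\bs{\mu}$ inside the norm and expanding the inner product, one gets the identity
\begin{equation*}
\|\bs{X}_i - \bar{\bs{\mu}}_{i-1}\|^2 = \|\bs{X}_i - \bs{\mu}\|^2 + 2\langle \bs{X}_i - \bs{\mu},\,\bs{\mu} - \bar{\bs{\mu}}_{i-1}\rangle + \|\bar{\bs{\mu}}_{i-1} - \bs{\mu}\|^2.
\end{equation*}
Averaging over $i=1,\dots,n$ gives three terms to analyze. The first is $\frac{1}{n}\sum_{i\leq n}\|\bs{X}_i-\bs{\mu}\|^2$, which converges almost surely to $\sigma^2$ by the classical SLLN applied to the iid bounded scalar sequence $\|\bs{X}_i-\bs{\mu}\|^2$.

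The remaining two terms I would argue vanish almost surely. For the quadratic residual, first note that by the multivariate SLLN, $\bar{\bs{\mu}}_{i-1}\to \bs{\mu}$ a.s., so $\|\bar{\bs{\mu}}_{i-1}-\bs{\mu}\|^2\to 0$ a.s. Since Cesàro averages preserve convergence to zero, $\frac{1}{n}\sum_{i\leq n}\|\bar{\bs{\mu}}_{i-1}-\bs{\mu}\|^2\to 0$ a.s. For the cross term, boundedness gives $\|\bs{X}_i-\bs{\mu}\|\leq 2B$, so by Cauchy--Schwarz $|\langle \bs{X}_i-\bs{\mu},\,\bs{\mu}-\bar{\bs{\mu}}_{i-1}\rangle|\leq 2B\|\bs{\mu}-\bar{\bs{\mu}}_{i-1}\|\to 0$ a.s., and again the Cesàro average vanishes.

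Putting these three pieces together yields $\widehat{\sigma}_n^2\to\sigma^2$ almost surely. No step is genuinely difficult here; the only subtle point is ensuring that the cross term goes to zero without needing an independent assumption on the pair $(\bs{X}_i,\bar{\bs{\mu}}_{i-1})$, which is handled uniformly via the deterministic bound from boundedness. (Alternatively, one could note that the cross-term partial sum is a bounded-increment martingale and invoke a martingale SLLN, but the pathwise Cesàro argument is cleaner and suffices.)
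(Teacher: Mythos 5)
Your proof is correct and follows essentially the same route as the paper: the identical three-term expansion of $\|\Xvec_i-\bar{\muvec}_{i-1}\|^2$ around $\muvec$, the SLLN for the leading term, and Cauchy--Schwarz plus a Ces\`aro argument (using boundedness and $\bar{\muvec}_{i-1}\xrightarrow{a.s.}\muvec$) for the cross and quadratic residual terms. The only cosmetic difference is that you carry the constant $2B$ explicitly, whereas the paper has normalized $B=1/2$ in that section.
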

\begin{proof}
    Decompose $\widehat{\sigma}_n^2$ as follows: 
    \begin{align}
        \widehat{\sigma}_n^2 & = \frac{1}{n}\sum_{i=1}^n \norm{\Xvec_i-\bar{\muvec}_{i-1}}^2 = \frac{1}{n}\sum_{i=1}^n \norm{\Xvec_i-\muvec + \muvec - \bar{\muvec}_{i-1}}^2\notag \\
        &= \frac{1}{n}\sum_{i=1}^n \norm{\Xvec_i - \muvec}^2  +\frac{2}{n}\sum_{i=1}^n \la \Xvec_i -\muvec, \muvec - \bar{\muvec}_{i-1}\ra + \frac{1}{n}\sum_{i=1}^n \norm{\muvec-\bar{\muvec}_{i-1}}^2. \label{eq:sigman2decomp}
    \end{align}
    Now, by the SLLN, the first sum converges to $\sigma^2$ almost surely. As for the second sum, write 
    \begin{align*}
        \left|\frac{2}{n}\sum_{i=1}^n \la \Xvec_i -\muvec, \muvec - \bar{\muvec}_{i-1}\ra \right| \leq \frac{2}{n}\sum_{i=1}^n \norm{\Xvec_i-\muvec} \norm{\muvec-\bar{\muvec}_{i-1}} \leq \frac{2}{n}\sum_{i=1}^n \norm{\muvec - \bar{\muvec}_{i-1}} \xrightarrow{a.s.} 0, 
    \end{align*}
    where almost sure convergence follows from combining the following three observations: (i) If the absolute value of a sequence converges to zero, then the sequence converges to zero; (ii) If a sequence converges to a value, then its partial sums converge to that value; (iii) $\bar{\muvec}_{i-1}$ converges to $\muvec$ a.s., so by continuous mapping theorem $\norm{\muvec-\bar{\muvec}_{i-1}} \xrightarrow{a.s}0$. The third sum in \eqref{eq:sigman2decomp} converges almost surely to 0 for the same reason. This completes the proof. 
\end{proof}

\begin{lemma}
    $\sum_{i\leq n} \psi_E(\lambda_i) \norm{\Xvec_i- \bar{\muvec}_{i-1}}^2$ converges to $\frac{c}{2}(d\log(1/\eps) + \log(1/\alpha))$ almost surely. 
\end{lemma}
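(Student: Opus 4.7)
The plan is to Taylor-expand $\psi_E$ around $0$, so that the sum of interest reduces to weighted empirical variances that can be analyzed with Lemma~\ref{lem:sigmahat-to-sigma}. Since $\psi_E(\lambda) = -\lambda - \log(1-\lambda) = \sum_{k\geq 2}\lambda^k/k$, I would write
\[
\psi_E(\lambda_i) = \frac{\lambda_i^2}{2} + R_i,\qquad 0\leq R_i \leq \lambda_i^3\cdot \frac{1}{1-\lambda_i}\cdot \frac{1}{3},
\]
valid once $n$ is large enough that each $\lambda_i<1/2$ (which holds eventually a.s.\ because $\widehat\sigma_{i-1}^2\to \sigma^2>0$ and $\lambda_i = O(1/\sqrt n)$). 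Plugging into the target sum splits it into a leading term and a negligible remainder.

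For the remainder, recalling $B=1/2$ gives $\norm{\Xvec_i-\bar\muvec_{i-1}}\leq 1$, so
\[
\sum_{i\leq n} R_i\,\norm{\Xvec_i-\bar\muvec_{i-1}}^2 \;\lesssim\; \sum_{i\leq n}\lambda_i^3 \;=\; \frac{(c\log(1/\alpha))^{3/2}}{n^{3/2}}\sum_{i\leq n}\frac{1}{\widehat\sigma_{i-1}^3}.
\]
Since $\widehat\sigma_{i-1}^3 \to \sigma^3>0$ a.s., the inner sum is $O(n)$ a.s., so the remainder is $O(n^{-1/2})\to 0$ a.s.

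The leading term rewrites as
\[
\frac{1}{2}\sum_{i\leq n}\lambda_i^2\,\norm{\Xvec_i-\bar\muvec_{i-1}}^2 \;=\; \frac{c\log(1/\alpha)}{2}\cdot \frac{1}{n}\sum_{i\leq n}\frac{\norm{\Xvec_i-\bar\muvec_{i-1}}^2}{\widehat\sigma_{i-1}^2},
\]
so it is enough to prove $\frac{1}{n}\sum_{i\leq n}\frac{\norm{\Xvec_i-\bar\muvec_{i-1}}^2}{\widehat\sigma_{i-1}^2} \xrightarrow{a.s.} 1$. I would do this with a sandwich argument. Fix $\epsilon\in(0,\sigma^2)$. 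By Lemma~\ref{lem:sigmahat-to-sigma}, there exists a random $N=N(\omega)$ such that $\sigma^2-\epsilon \leq \widehat\sigma_{i-1}^2 \leq \sigma^2+\epsilon$ for every $i>N$. The first $N$ terms contribute $O(N/n)\to 0$ (since each summand is a bounded ratio once $i>1$, with the $i=1$ term trivially $O(1)$ because $\widehat\sigma_0^2=1$). For the tail,
\[
\frac{1}{\sigma^2+\epsilon}\cdot\frac{1}{n}\!\!\sum_{N<i\leq n}\!\!\norm{\Xvec_i-\bar\muvec_{i-1}}^2 \;\leq\; \frac{1}{n}\!\!\sum_{N<i\leq n}\!\!\frac{\norm{\Xvec_i-\bar\muvec_{i-1}}^2}{\widehat\sigma_{i-1}^2} \;\leq\; \frac{1}{\sigma^2-\epsilon}\cdot\frac{1}{n}\!\!\sum_{N<i\leq n}\!\!\norm{\Xvec_i-\bar\muvec_{i-1}}^2,
\]
and $\frac{1}{n}\sum_{i\leq n}\norm{\Xvec_i-\bar\muvec_{i-1}}^2 = \widehat\sigma_n^2\to\sigma^2$ a.s.\ again by Lemma~\ref{lem:sigmahat-to-sigma}. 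Hence the $\liminf$ and $\limsup$ are squeezed into $[\sigma^2/(\sigma^2+\epsilon),\,\sigma^2/(\sigma^2-\epsilon)]$ a.s. Letting $\epsilon\to 0$ along a countable sequence finishes the proof.

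The main obstacle is cleanly executing the sandwich step: the split index $N$ is random, but the argument survives because $N$ is finite almost surely and the tail bounds hold uniformly for $i>N$, so the $N$-term prefix is automatically $o(n)$. Everything else is Taylor expansion and straightforward bookkeeping.
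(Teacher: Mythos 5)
Your proof is correct and follows the same essential route as the paper: approximate $\psi_E(\lambda)$ by $\lambda^2/2$ for small $\lambda$, then reduce to the convergence of $\widehat{\sigma}_{i-1}^2$ to $\sigma^2$ from Lemma~\ref{lem:sigmahat-to-sigma}. The bookkeeping differs in two minor respects. First, for the $\psi_E$ approximation you use an additive Taylor remainder with an explicit $O(\lambda_i^3)$ bound, whereas the paper writes a multiplicative error $\psi_E(\lambda_i) = \tfrac{\lambda_i^2}{2}(1+v_i)$ with $v_i\to 0$; these are interchangeable. Second, and more substantively, to show $\frac{1}{n}\sum_{i\leq n}\norm{\Xvec_i-\bar{\muvec}_{i-1}}^2/\widehat{\sigma}_{i-1}^2\to 1$ you run an explicit sandwich argument with a random split index $N(\omega)$, while the paper absorbs $\sigma^2/\widehat{\sigma}_{i-1}^2 = 1+u_i$ into a weighted sum and asserts the limit; the paper's step implicitly invokes a Toeplitz/Ces\`aro-type argument that vanishing multiplicative perturbations preserve the averaged limit, which it does not spell out. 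Your sandwich argument is the more explicitly justified version of that same step, so if anything your write-up is slightly more rigorous; the one thing to tighten is the remark that the first $N$ terms contribute $O(N/n)$---the summands are not uniformly bounded by a deterministic constant, but since there are only finitely many of them for each fixed $\omega$, their total is a finite random constant divided by $n$, which still vanishes.
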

\begin{proof}
    Define the function $\psi_H(\lambda_i) = \lambda_i^2/8$. The ``H'' stands for Hoeffding, and the notation is borrowed from \citet{howard2020time}. Now, \citet{waudby2023estimating} demonstrate that $\frac{\psi_E(\lambda)}{4\psi_H(\lambda_i)}\to 1$ as $\lambda\to 0$. Therefore, we can write $\psi_E(\lambda_i) / \psi_H(\lambda_i) = 4 + 4v_i$ where $v_i\to 0$ as $i\to \infty$, since $\lambda_i \xrightarrow{a.s.} 0$. Furthermore, by Lemma~\ref{lem:sigmahat-to-sigma}, we can write $\sigma^2 / \widehat{\sigma}_n^2 = 1 + u_n$ for some $u_n\xrightarrow{a.s.} 0$. Therefore, 
    \begin{align*}
        \sum_{i=1}^n \psi_E(\lambda_i) \norm{\Xvec_i-\bar{\muvec}_{i-1}}^2 &= \sum_{i=1}^n \frac{\psi_E(\lambda_i)}{\psi_H(\lambda_i)}\psi_H(\lambda_i) \norm{\Xvec_i-\bar{\muvec}_{i-1}}^2 \\
        &= \frac{1}{2}\sum_{i=1}^n \lambda_i^2 \norm{\Xvec_i-\bar{\muvec}_{i-1}}^2 (1 + v_i)\\
        &= \frac{c(d\log(1/\eps) + \log(1/\alpha))}{2n}\sum_{i=1}^n \frac{\norm{\Xvec_i-\bar{\muvec}_{i-1}}^2}{\widehat{\sigma}_i^2}  (1 + v_i) \\
        &= \underbrace{\frac{c(d\log(1/\eps) + \log(1/\alpha))}{2n}\sum_{i=1}^n \frac{\norm{\Xvec_i - \bar{\muvec}_{i-1}}^2}{\sigma^2}}_{:=K_i}  (1 + u_i)(1 + v_i).
    \end{align*}
    Write $(1 + u_i)(1 + v_i) = 1 + w_i$ where $w_i\xrightarrow{a.s.}0$. Then, the above sum becomes 
    \begin{align*}
        \sum_{i=1}^n K_i( 1 + w_i) &= \frac{c(d\log(1/\eps) + \log(1/\alpha))}{2\sigma^2}\left\{ \frac{1}{n}\sum_{i=1}^n \norm{\Xvec_i-\bar{\muvec}_{i-1}}^2(1 + w_i)\right\} \\ 
        &= \frac{c(d\log(1/\eps) + \log(1/\alpha))}{2\sigma^2}\left\{ \widehat{\sigma}_n^2  + \widehat{\sigma}_n^2 w_i\right\} \\
        &\xrightarrow{a.s.} \frac{c(d\log(1/\eps) + \log(1/\alpha))}{2},
    \end{align*}
    since $\widehat{\sigma}_n^2\xrightarrow{a.s.}\sigma^2$ by Lemma~\ref{lem:sigmahat-to-sigma} and $w_i\xrightarrow{a.s.}0$. 
    
\end{proof}

\begin{lemma}
    $\frac{1}{\sqrt{n}}\sum_{i\leq n} \lambda_i$ converges to $\sqrt{\frac{c(d\log(1/\eps) + \log(1/\alpha))}{\sigma^2}}$ almost surely. 
\end{lemma}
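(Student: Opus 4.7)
The plan is to substitute the explicit form of $\lambda_i$ and reduce the claim to a Cesàro-mean statement about $1/\widehat{\sigma}_{i-1}$. Concretely, from $\lambda_i = \sqrt{c\log(1/\alpha)/(\widehat{\sigma}_{i-1}^2 n)}$ we get
\begin{equation*}
    \frac{1}{\sqrt{n}}\sum_{i=1}^n \lambda_i = \sqrt{c\log(1/\alpha)} \cdot \frac{1}{n}\sum_{i=1}^n \frac{1}{\widehat{\sigma}_{i-1}},
\end{equation*}
so it suffices to show $\frac{1}{n}\sum_{i=1}^n \widehat{\sigma}_{i-1}^{-1} \xrightarrow{a.s.} 1/\sigma$.

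By Lemma~\ref{lem:sigmahat-to-sigma} we have $\widehat{\sigma}_n^2 \xrightarrow{a.s.} \sigma^2$, and assuming $\sigma>0$, the continuous mapping theorem gives $\widehat{\sigma}_{i-1}^{-1} \xrightarrow{a.s.} \sigma^{-1}$. Applying the Cesàro-mean result (if a real sequence converges to a limit $L$, then its running averages also converge to $L$) to the sequence $\widehat{\sigma}_{i-1}^{-1}$ yields $\frac{1}{n}\sum_{i=1}^n \widehat{\sigma}_{i-1}^{-1} \xrightarrow{a.s.} \sigma^{-1}$, which finishes the proof.

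The only minor issue to address is the initial term $\widehat{\sigma}_0^2 = 1$ (i.e.\ $i=1$), which is a single bounded summand and contributes $O(1/n) \to 0$ to the Cesàro average, so it does not affect the limit. One should also implicitly invoke the assumption $\sigma > 0$; otherwise the ratio is ill-defined (in the degenerate case $\sigma = 0$, the claim would need to be reinterpreted, but this case is not of practical interest since the distribution is then a point mass). I do not anticipate any genuine obstacle here — the entire argument is a clean application of Cesàro averaging combined with the previously established a.s.\ convergence of the empirical variance.
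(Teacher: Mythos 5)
Your proposal is correct and matches the paper's own argument: both substitute the explicit form of $\lambda_i$, apply the continuous mapping theorem to Lemma~\ref{lem:sigmahat-to-sigma} to get $\widehat{\sigma}_{i-1}^{-1}\xrightarrow{a.s.}\sigma^{-1}$, and conclude by Ces\`aro averaging. Your remarks about the $\widehat{\sigma}_0=1$ term and the implicit $\sigma>0$ assumption are harmless refinements of the same proof.
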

\begin{proof}
Via similar reasoning as above, we can write $\sigma/\widehat{\sigma}_i\to 1 + u_i$ for some $u_i\xrightarrow{a.s.}0$. (Here we've employed the continuous mapping theorem on Lemma~\ref{lem:sigmahat-to-sigma}.) Therefore, 
\begin{align*}
    \frac{1}{\sqrt{n}}\sum_{i=1}^n \lambda_i &= \frac{1}{\sqrt{n}}\sum_{i=1}^n \sqrt{\frac{c(d\log(1/\eps) + \log(1/\alpha))}{\widehat{\sigma}_{i-1}^2 n}} \\
    &= \frac{\sqrt{c(d\log(1/\eps) + \log(1/\alpha))}}{n\sigma}\sum_{i=1}^n \sqrt{\frac{\sigma^2}{\widehat{\sigma}_{i-1}^2}}  \\
    &\xrightarrow{a.s.} \sqrt{\frac{c(d\log(1/\eps) + \log(1/\alpha))}{\sigma^2}},
\end{align*}
where we used the fact that if a sequence converges to a value then so too do its partial sums. 
\end{proof}
Now, let 
\begin{equation}
    W_n := \frac{\sum_{i\leq t} \psi_E(\lambda_i)\norm{\Xvec_i - \bar{\muvec}_{i-1}}^2}{(1-\eps) \sum_{i\leq t}\lambda_i}   
    + \frac{d\log(1/\eps) + \log\frac{1}{\alpha}}{(1-\eps)\sum_{i\leq t}\lambda_i},
\end{equation}
be the width of our CI in Theorem~\ref{thm:cs_bounded}. We have 
\begin{align*}
    \sqrt{n} W_n &= \frac{\sum_{i\leq t} \psi_E(\lambda_i)\norm{\Xvec_i - \bar{\muvec}_{i-1}}^2+ d\log(1/\eps) + \log\frac{1}{\alpha}}{\frac{1}{\sqrt{n}}(1-\eps)\sum_{i\leq t}\lambda_i} \\ 
    &\xrightarrow{a.s.} \frac{\frac{c}{2}(d\log(1/\eps) + \log(1/\alpha))+ d\log(1/\eps) + \log(1/\alpha)}{(1-\eps)\sqrt{c(d\log(1/\eps) + \log(1/\alpha))/\sigma^2}} \\
    &= \frac{\sigma\sqrt{d\log(1/\eps) + \log(1/\alpha)}}{1-\eps}\left(\frac{\sqrt{c}}{2} + \frac{1}{\sqrt{c}} \right). 
\end{align*}
Minimizing over $c$ gives $c=2$, in which case we obtain 
\[\sqrt{n}W_n \xrightarrow{a.s.} \frac{\sigma}{1-\eps}\sqrt{2d\log(1/\eps) + \log(1/\alpha)}.\]

\end{document}